\newtheorem{thm}{Theorem}[section]
\newtheorem{lem}[thm]{Lemma}
\newtheorem{cor}[thm]{Corollary}
\newtheorem{rem}[thm]{Remark}
\def\XXint#1#2#3{{\setbox0=\hbox{$#1{#2#3}{\int}$}
	\vcenter{\hbox{$#2#3$}}\kern-.5\wd0}}
\newcommand{ \mr }{ \mathbb{R} }
\newtheorem{defn}[thm]{Definition}
\numberwithin{equation}{section}
\begin{document}
\title{A geometric result for composite materials with $C^{1,\gamma}$-boundaries}

\subjclass[2020]{Primary 74A40, 26B10,  Secondary 35J47, 35B65}

\date{\today}

\keywords{Composite materials, Coordinate transformation, Regularity, Linear elliptic systems}

\author{Youchan Kim}
\email[Youchan Kim]{youchankim@uos.ac.kr}
\address[Youchan Kim]{Department of Mathematics, University of Seoul, Seoul 02504, Republic of Korea}
\author{Pilsoo Shin}
\email[Pilsoo Shin]{shinpilsoo.math@kgu.ac.kr}
\address[Pilsoo Shin]{Department of Mathematics, Kyonggi University, Suwon 16227, Republic of Korea}

\begin{abstract}
In this paper, we obtain a geometric result for composite materials related to elliptic and parabolic partial differential equations. In the classical papers \cite{LN_CPAM1,LV_ARMA1}, they assumed that for any scale and for any point there exists a coordinate system such that the boundaries of the individual components of a composite material locally become $C^{1,\gamma}$-graphs. We prove that if the individual components of a composite material are composed of $C^{1,\gamma}$-boundaries then such a coordinate system in \cite{LN_CPAM1,LV_ARMA1} exists, and therefore obtaining the gradient boundedness and the piecewise gradient H\"{o}lder continuity results for linear elliptic systems related to composite materials.
\end{abstract}

\maketitle

\section{Introduction}

In this paper, we study a geometric property of composite materials related to elliptic and parabolic partial differential equations. The classical results \cite{LN_CPAM1,LV_ARMA1} obtained gradient boundedness and gradient H\"{o}lder continuity of the weak solutions to linear elliptic equations and systems related to composite materials under the assumption that for any scale and for any point  there exists a coordinate system such that the boundaries of the individual components become $C^{1,\gamma}$-graphs. A natural question then arises, ``for a composite material, if the individual components are composed of $C^{1,\gamma}$-boundaries then this geometry satisfies the assumption in \cite{LN_CPAM1,LV_ARMA1}?". For this geometry, we prove that for any scale and for any point there exists a coordinate system such that the boundaries of the individual component become $C^{1,\gamma}$-graphs. 

A typical composite material $U \subset \mathbb{R}^{n}$ ($n \geq 2$) composed of $C^{1,\gamma}$-boundaries can be described as the following. Let $D_{m} \subset U$ be a connected component in $U$ and $U_{2}, \cdots, U_{m} \subset \mathbb{R}^{n}$ be the connected components contained in $D_{1}$. Without loss of generality, we may assume that $D_{1} \cup (U_{2} \cup \cdots \cup U_{m})$, $U_{2}$, $\cdots$, $U_{m}$ are open.  Then one can find an open set  $U_{1} \subset U$ satisfying $D_{1} = U_{1} \setminus (U_{2} \cup \cdots \cup U_{m})$. If $U_{1},U_{2},\cdots,U_{l}$ are $C^{1,\gamma}$-domains then we may say that $D_{1}$ is composed of $C^{1,\gamma}$-boundaries. Moreover, for any $U_{i}, U_{j} \in \{ U_{1}, \cdots, U_{m} \}$ one of the followings holds: (1) $U_{i} \subset U_{j}$ (2) $U_{j} \subset U_{i}$ (3) $U_{i} \cap U_{j} = \emptyset$. With the observation in this paragraph, we define $C^{1,\gamma}$-class composite domains in Definition \ref{intro_composite domain}. Before stating Definition \ref{intro_composite domain}, we introduce the following notations.

\begin{enumerate}
\item $x'=(x^{2},\cdots,x^{n}) \in \mr^{n-1}$ and $x =(x^{1},x') \in \mr \times \mr^{n-1}$. 
\item $B_{r}(y)=\{x \in\mathbb{R}^{n}: |x-y|< r\}$, $B'_{r}(y')=\{x' \in\mathbb{R}^{n-1}: |x'-y'|< r\}$ and 
      $Q_{r}(y)= (y^{1}-r, y^{1}+r) \times B_{r}'(y') $.
\item $B_{r} = B_{r}(\mathbf{0})$, $B_{r}'= B_{r}'(\mathbf{0}')$ and $Q_{r} = Q_{r}(\mathbf{0})$
\item $A = A_{1} \sqcup \cdots \sqcup A_{i}$ means that $A = A_{1} \cup \cdots \cup A_{i}$ and $A_{1}$, $\cdots$, $A_{i}$ are mutually disjoint.
\item $e_{i}$ denotes the vector with one $1$ in the $i$-th coordinate and $0$'s elsewhere, say $e_{1}=(1,0,\cdots,0)$, $\cdots$, $e_{n}=(0,\cdots,0,1)$.
\item For the matrix $M \in \mathbb{R}^{m \times n}$, let $M^{T}$ be the transpose of $M$. Also we denote $M =\left(\begin{array}{c} M_{1} \\ \vdots \\ M_{m} \end{array} \right) = \left(\begin{array}{ccc} M_{11} & \cdots & M_{1n} \\ \vdots & \ddots & \vdots \\ M_{m1} & \cdots & M_{mn} \end{array} \right) $ for $M_{i} \in \mathbb{R}^{1 \times n}$.
\end{enumerate}

\begin{restatable}{defn}{introdomain}
\label{intro_domain}
For $n \geq 2$, $U \subset \mathbb{R}^{n}$  is a $(C^{1,\gamma},R,\theta)$-domain, if for any $B_{R}(y)$ with $B_{R}(y) \cap \partial U \not = \emptyset$ there exists $x$-coordinate system and $C^{1,\gamma}$-function $\psi : B_{R}' \to \mathbb{R}$  such that
\begin{equation*}
U \cap B_{R}(y) = \{ x \in B_{R}: x^{1} > \psi(x') \}
\qquad \text{and} \qquad
\| \psi \|_{C^{1,\gamma}(B_{R}')} \leq \theta
\end{equation*}
where $y$ is the origin in the new $x$-coordinate system.
\end{restatable}

\begin{restatable}{defn}{introcompositedomain} \label{intro_composite domain}
For $n \geq 2$, $U \subset \mathbb{R}^{n}$ is a composite $(C^{1,\gamma},R,\theta)$-domain with subdomains $\{ U_{1}, \cdots, U_{K} \}$ if 

(a) $U_{0} := U$ and $\{ U_{1}, \cdots, U_{K} \}$ are $(C^{1,\gamma},R,\theta)$-domains

(b) one of the following holds for any $U_{i}$, $U_{j}$ $(i,j \in \{ 0,,\cdots, K \}, ~ i \not = j)$:
\begin{equation}\label{HHH6500}
(1) ~ U_{i} \subsetneq U_{j}
\qquad
(2) ~ U_{j} \subsetneq U_{i}
\qquad
(3) ~ U_{i} \cap U_{j} = \emptyset.
\end{equation}
\end{restatable}

\medskip

Then for the family of set $S = \{ U_{0} : =U, U_{1}, \cdots, U_{K} \}$,
\begin{equation}\label{intro_W_{j}}
W_{j} = U_{j} \setminus \left( \bigcup_{ U_{i} \in S, ~ U_{i} \subsetneq U_{j} } U_{i} \right)
\end{equation}
represents an individual component in $U_{j}$. Also $\bigcup\limits_{U_{i} \in S, U_{i} \subsetneq U_{j}} U_{i}$ represents the union of the components contained in $U_{j}$ except $W_{j}$. So the coefficient of an elliptic equation on composite material $U$ can be written by
\begin{equation*}
a_{ij} = \sum_{0 \leq k \leq K} a_{ij}^{k} \chi_{W_{k}}
\end{equation*}
where $a_{ij}^{k}$ represent the physical property of the material in component $W_{k}$. In this paper we prove that for any $B_{R}$, there exists a coordinate system such that $W_{j} \cap B_{R}$ ($j \in \{ 0,\cdots, K \} $) can be described by $C^{1,\gamma}$-functions as in the following theorems. We first state the interior result. 

\begin{restatable}{thm}{maintheoreminterior}\label{main theorem1}
For any $\tau \in (0,1]$, one can find  $R_{0} = R_{0}(n,\gamma,\theta,\tau) \in (0,1]$ so that the following holds for any $R \in (0,R_{0}]$. Suppose that $U \supset B_{R}$ is a composite $(C^{1,\gamma},80R,\theta)$-domain with subdomains $\{ U_{1}, \cdots, U_{K} \}$. Let
\begin{equation*}
S = \{ U_{0} : =U, U_{1},\cdots, U_{K},U_{K+1} : =\emptyset \}
\end{equation*}
and
\begin{equation*}
W_{j} = U_{j} \setminus \left( \bigcup_{U_{i} \in S, ~ U_{i} \subsetneq U_{j} } U_{i} \right)
\qquad (j \in \{ 0, \cdots, K \} ).
\end{equation*}
Then there exist $y$-coordinate system and $C^{1,\gamma}$-functions  $\varphi_{-m} , \cdots, \varphi_{l+1} : B_{R}' \to \mathbb{R}$ $(l,m \geq 0)$ satisfying
\begin{equation}\label{MMM5300}
U \cap B_{R} =  ( W_{i_{-m}} \sqcup \cdots \sqcup W_{i_{l}} ) \cap B_{R}
\end{equation}
and 
\begin{equation}\begin{aligned}\label{MMM5400}
& \{ (y^{1},y') \in B_{R} : \varphi_{d}(y') < y^{1} < \varphi_{d+1}(y')  \} \\
& \qquad \subset W_{i_{d}} \cap B_{R} \\
& \qquad \subset \{ (y^{1},y') \in B_{R} : \varphi_{d}(y') \leq y^{1} \leq \varphi_{d+1}(y')  \}
\end{aligned}\end{equation}
with the estimates
\begin{equation}\label{MMM5500}
\|D_{y'} \varphi_{d} \|_{L^{\infty}(B_{R}')} \leq  \tau
\quad \text{and} \quad
[D_{y'} \varphi_{d}]_{C^{\gamma}(B_{R}')} \leq 288 n \theta
\end{equation}
for any $d \in \{ -m, \cdots, l \}$ where $\varphi_{l+1} \equiv R$ and $\varphi_{-m} \equiv -R$. Moreover, if $B_{R} \not \subset W_{0}$ then 
\begin{equation}\label{MMM5600}
\mathbf{0} \in W_{k},
\qquad
(\varphi_{k}(0'),0') \in B_{R}
\qquad \text{and} \qquad D_{y'}\varphi_{k}(0')=0'
\end{equation}
for some $k \in \{ -m,\cdots, l \}$.
\end{restatable}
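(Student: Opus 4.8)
The plan is to pin down a single coordinate direction that simultaneously works for \emph{all} of the subdomains whose boundary meets $B_{R}$, and then to take the $\varphi_{d}$ to be the graphs of those boundaries in that direction, inserting the two constants $\pm R$ as the extreme ones. First I would record the consequences of property~(b): the subdomains containing a fixed point are totally ordered by inclusion, so $U=\bigsqcup_{0\le j\le K}W_{j}$. A subdomain $U_{i}$ with $\partial U_{i}\cap B_{R}=\emptyset$ either contains $B_{R}$ (then it only affects which label $W_{i_{d}}$ a region receives) or is disjoint from $B_{R}$ (then it is irrelevant); so it suffices to handle the finitely many $U_{i}$ with $\partial U_{i}\cap B_{R}\neq\emptyset$. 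For each such $U_{i}$, apply Definition~\ref{intro_domain} at a point $y_{i}\in\partial U_{i}\cap B_{R}$: since $|y_{i}|<R$ we have $B_{R}\subset B_{78R}\subset B_{80R}(y_{i})$, so there $\partial U_{i}$ is a $C^{1,\gamma}$-graph whose gradient has $\gamma$-Hölder seminorm $\le\theta$; hence the outer unit normal of $\partial U_{i}$ oscillates by at most $C(n)\theta R^{\gamma}$ on $\partial U_{i}\cap B_{R}$, and this is as small as we wish once $R_{0}$ is small. Write $\mathbf n_{i}$ for that normal at $y_{i}$.

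The heart of the matter, and what I expect to be the real work, is the claim that all the $\mathbf n_{i}$ are nearly parallel: $|\mathbf n_{i}\pm\mathbf n_{j}|\le C(n)\theta R^{\gamma}$ for any two relevant $U_{i},U_{j}$. I would prove this from property~(b). Say $U_{i}\subsetneq U_{j}$ (the disjoint case is the same with $\overline{U_{i}^{c}}$ in place of $\overline{U_{j}}$). If $\partial U_{i}\cap\partial U_{j}$ meets a fixed dilate $B_{CR}$, the outer normals coincide at that common boundary point and the oscillation estimate above propagates this to $|\mathbf n_{i}-\mathbf n_{j}|\le C\theta R^{\gamma}$. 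Otherwise $\partial U_{i}$ stays strictly inside $U_{j}$ near the origin, so one minimizes $\operatorname{dist}(\,\cdot\,,\partial U_{j})$ over $\partial U_{i}$ restricted to a suitable ball; since $\partial U_{j}\cap B_{R}\neq\emptyset$ this minimal distance is $<2R$, the minimizer $p^{*}\in\partial U_{i}$ and the nearest point $q^{*}\in\partial U_{j}$ both lie in $B_{CR}$, the segment $[p^{*},q^{*}]$ is normal to each surface, so $\mathbf n_{i}(p^{*})\parallel\mathbf n_{j}(q^{*})$ (with the common sign fixed by $U_{i}\subset U_{j}$), and the oscillation estimate finishes it. The delicate points are that the minimizer must be taken in the \emph{interior} of the ball (handled by varying the radius, or by a separate argument when the two surfaces converge toward $\partial B_{R}$), and the careful bookkeeping of constants; in particular the factor $288n$ in~\eqref{MMM5500} comes from the distortion of the Hölder seminorm of $D\psi_{i}$ under the rotation that flattens $\partial U_{i}$ at $y_{i}$.

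Next, fix the $y^{1}$-axis to be the common (nearly shared) direction. If $B_{R}\not\subset W_{0}$, take it to be the outer normal --- at its point nearest $\mathbf 0$ --- of the boundary of the innermost subdomain containing a one-sided neighbourhood of $\mathbf 0$; this choice is what will yield~\eqref{MMM5600}. In these coordinates each relevant $\partial U_{i}\cap B_{R}$ is a graph $y^{1}=g_{i}(y')$ over $B'_{R}$ with $\|D_{y'}g_{i}\|_{L^{\infty}(B'_{R})}\le C(n)\theta R^{\gamma}\le\tau$ (after shrinking $R_{0}$) and $[D_{y'}g_{i}]_{C^{\gamma}(B'_{R})}\le 288n\theta$. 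Because nested or disjoint domains have ordered boundaries and each $g_{i}$ has gradient $\le\tau<1$, no two of the graphs $g_{i}$ cross in $B'_{R}$; hence they are totally ordered pointwise, say $g_{(1)}\le\cdots\le g_{(P)}$. Put $\varphi_{-m+t}:=g_{(t)}$ for $1\le t\le P$ (so $m+l=P$), and $\varphi_{-m}\equiv -R$, $\varphi_{l+1}\equiv R$; these are the required $C^{1,\gamma}$-functions and they satisfy~\eqref{MMM5500}.

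Finally, an open slab $\{\varphi_{d}(y')<y^{1}<\varphi_{d+1}(y')\}\cap B_{R}$ meets no $\partial U_{i}$, so it is connected and lies either entirely inside or entirely outside each $U_{i}$; therefore it lies in a single component $W_{i_{d}}$, and two consecutive slabs lie in distinct components since they are separated by some $\partial U_{i}$. As $B_{R}\subset U$, the closed slabs cover $B_{R}$ and the resulting $W_{i_{d}}$ exhaust the components meeting $B_{R}$, which gives~\eqref{MMM5300} and the two-sided inclusion~\eqref{MMM5400}. The estimate~\eqref{MMM5500} was built into the construction, and~\eqref{MMM5600} follows from the choice of axis: the graph $\varphi_{k}$ directly below $\mathbf 0$ is flat at $0'$ and satisfies $|\varphi_{k}(0')|<R$.
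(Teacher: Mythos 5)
Your approach is genuinely different from the paper's. The paper reduces to a linear chain $U_{i_0}\supsetneq U_{i_1}\supsetneq\cdots$ (via the decomposition lemmas of Section~3, ultimately resting on the ``at most two disjoint Reifenberg-flat domains meet a small ball'' fact) and then peels the boundaries off one at a time, re-expressing each as a graph in a single coordinate system by an inductive application of Lemma~\ref{HHH1000}. You instead treat all relevant $\partial U_{i}$ at once, argue that their normals are nearly collinear, pick one common axis, order the resulting graphs pointwise, and read off the slab structure. Conceptually this is attractive and captures the right big ideas (the almost-parallel normals, the total ordering of nested/disjoint graphs, the slab-to-component correspondence).

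However, there is a genuine gap at the central assertion ``in these coordinates each relevant $\partial U_{i}\cap B_{R}$ is a graph $y^{1}=g_{i}(y')$ over $B'_{R}$.'' Knowing that the normals of $\partial U_{i}$ are everywhere within $O(\theta R^{\gamma})$ of $e_{1}$ gives, via the implicit function theorem, only a \emph{local} graph near each point of $\partial U_{i}\cap B_{R}$; it does not by itself produce a single $C^{1,\gamma}$ function defined on all of $B'_{R}$ whose graph coincides with $\partial U_{i}\cap B_{R}$, nor does it rule out that the boundary exits and re-enters the cylinder over $B'_{R}$. Controlling the domain of the implicit function and gluing the local pieces is exactly the content of Lemma~\ref{AAA1000} and its companions (Lemmas~\ref{BBB6000}, \ref{CCC1000}, \ref{DDD1000}, \ref{EEE1000}) in Section~2, which make up the bulk of the technical work; your proposal omits this step entirely. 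Relatedly, Definition~\ref{intro_domain} only bounds $\|\psi\|_{C^{1,\gamma}}\le\theta$, so $D\psi$ is not small a priori; making the gradient small requires choosing the flat direction at the nearest boundary point as in Lemma~\ref{CTS8000} rather than applying the definition at an arbitrary $y_{i}\in\partial U_{i}\cap B_{R}$, and this choice must then be shown compatible across all $U_{i}$ --- again through the quantitative graph lemmas. Finally, your near-parallelism claim $|\mathbf n_{i}\pm\mathbf n_{j}|\lesssim\theta R^{\gamma}$ is only sketched: the distance-minimization argument is the right idea (it is essentially the content of \cite[Lemma~2.4]{JK_CV1}, which the paper quotes as Lemma~\ref{CTS6000}), but the cases you flag as delicate --- the minimizer sitting on $\partial B_{R}$, or $\partial U_{i}\cap\partial U_{j}$ meeting $B_{CR}$ while the nearest points lie outside --- are not resolved in the proposal and are precisely where a clean proof must do work. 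Until the global graph step and the normal-comparison lemma are supplied, the argument does not close.
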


We next state the boundary result. We remark that the general case that $\mathbf{0} \not \in \partial U$ can be handled by the case that $\mathbf{0} \in \partial U$ by taking a sufficiently larger ball.

\begin{restatable}{thm}{maintheoremboundary}\label{main theorem2}
For any $\tau \in (0,1]$, one can find  $R_{0} = R_{0}(n,\gamma,\theta,\tau) \in (0,1]$ so that the following holds for any $R \in (0,R_{0}]$. Suppose that $U$ is a composite $(C^{1,\gamma},80R,\theta)$-domain with subdomains $\{ U_{1}, \cdots, U_{K} \}$ and $\mathbf{0} \in \partial U$. Let
\begin{equation*}
S = \{ U_{0} : =U, U_{1},\cdots, U_{K},U_{K+1} : =\emptyset \}
\end{equation*}
and
\begin{equation*}
W_{j} = U_{j} \setminus \left( \bigcup_{U_{i} \in S, ~ U_{i} \subsetneq U_{j} } U_{i} \right)
\qquad (j \in \{ 0, \cdots, K\}).
\end{equation*}
Then there exist $y$-coordinate system and $C^{1,\gamma}$-functions  $\varphi_{0} , \cdots, \varphi_{l+1} : B_{R}' \to \mathbb{R}$ $(l \geq 0)$ satisfying
\begin{equation}\label{NNN5300}
U \cap B_{R} =  ( W_{i_{0}} \sqcup \cdots \sqcup W_{i_{l}} ) \cap B_{R}
\end{equation}
and 
\begin{equation}\begin{aligned}\label{NNN5400}
& \{ (y^{1},y') \in B_{R} : \varphi_{d}(y') < y^{1} < \varphi_{d+1}(y')  \} \\
& \qquad \subset W_{i_{d}} \cap B_{R} \\
& \qquad \subset \{ (y^{1},y') \in B_{R} : \varphi_{d}(y') \leq y^{1} \leq \varphi_{d+1}(y')  \}
\end{aligned}\end{equation}
with the estimates
\begin{equation}\label{NNN5500}
\|D_{y'} \varphi_{d} \|_{L^{\infty}(B_{R}')} \leq  \tau
\quad \text{and} \quad
[D_{y'} \varphi_{d}]_{C^{\gamma}(B_{R}')} \leq 288 n \theta
\end{equation}
for any $d \in \{ 0, \cdots, l \}$ where $\varphi_{l+1} \equiv R$. Moreover, we have that
\begin{equation}\label{NNN5600}
\varphi_{0}(0')= 0
\qquad \text{and} \qquad D_{y'}\varphi_{0}(0')=0'.
\end{equation}
\end{restatable}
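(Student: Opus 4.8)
The plan is to reduce Theorem \ref{main theorem2} to the interior statement of Theorem \ref{main theorem1} together with the single-domain normalization built into Definition \ref{intro_domain}. First I would apply the defining property of the $(C^{1,\gamma},80R,\theta)$-domain $U$ at the boundary point $\mathbf 0\in\partial U$: this produces a coordinate system and a $C^{1,\gamma}$-graph $\psi$ with $\psi(0')$ equal to the first coordinate of $\mathbf 0$ and, after a rotation aligning the new $y^1$-axis with the inward normal $e_1 - (0',D\psi(0'))$ suitably normalized, one can arrange $D\psi(0')=0'$ and $\psi(0')=0$; a standard rescaling argument (choosing $R_0$ small depending on $n,\gamma,\theta,\tau$) then forces $\|D_{y'}\psi\|_{L^\infty(B_R')}\le\tau$ while $[D_{y'}\psi]_{C^\gamma}$ only picks up a fixed dimensional constant, giving the bound $288n\theta$ as in \eqref{NNN5500} once the other graphs are produced. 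This $\psi$ will serve as $\varphi_0$, and \eqref{NNN5600} is then immediate.

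Next I would set up the ordered list of components meeting $B_R$. Because of the laminar/nested structure \eqref{HHH6500}, inside the half-ball $U\cap B_R=\{y^1>\varphi_0(y')\}\cap B_R$ the subdomains $U_i$ that intersect $B_R$ are totally ordered by inclusion along each vertical fiber, so the individual components $W_{i}$ stratify $U\cap B_R$ into layers $W_{i_0},\dots,W_{i_l}$ stacked in the $y^1$-direction; this is exactly the content of \eqref{NNN5300}. To produce the interior separating graphs $\varphi_1,\dots,\varphi_l$ one applies Theorem \ref{main theorem1} (the interior result) to the nested family restricted to a slightly smaller ball, or rather re-runs its proof in the half-ball setting: each $\varphi_d$ arises as the graph of the boundary $\partial U_{i_d}$ between consecutive layers, inheriting $[D_{y'}\varphi_d]_{C^\gamma}\le 288n\theta$ from the $C^{1,\gamma}$-bound on each $U_i$ and $\|D_{y'}\varphi_d\|_{L^\infty}\le\tau$ from the smallness of $R_0$. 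The sandwich inclusions \eqref{NNN5400} with $\varphi_{l+1}\equiv R$ then follow by definition of the layers, where one must be slightly careful at points where two consecutive boundaries touch — this is why the inclusions are stated with strict inequality on the inside and non-strict on the outside.

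The main obstacle I expect is \textbf{consistency of coordinate systems}: the boundary graph for $U=U_0$ and the interior graphs for $U_1,\dots,U_K$ are a priori given in different coordinate systems (Definition \ref{intro_domain} only guarantees some rotation works for each domain separately), so one cannot simply union the graphs. The key step is therefore a uniform-flatness argument showing that, once $R_0$ is small, every $\partial U_i$ meeting $B_R$ is so close to a hyperplane that its ``correct'' coordinate system can be taken to be a small perturbation of the one fixed for $U_0$; quantitatively, the tangent hyperplanes of all the $\partial U_i$'s that pass near a common point must be within $O(R^\gamma\theta)$ of each other, because otherwise two of the $\partial U_i$ would cross inside $B_R$, violating \eqref{HHH6500}. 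Making this ``no crossing forces near-parallel'' step precise — i.e. extracting from the topological nesting a quantitative cone condition that lets one express all boundaries simultaneously as graphs over a single $B_R'$ with the stated $\tau$ and $288n\theta$ bounds — is the technical heart of the argument; once it is in place, the monotone re-ordering of the layers along $y^1$ and the verification of \eqref{NNN5300}--\eqref{NNN5600} is bookkeeping, and \eqref{NNN5600} in particular just records that we centered and rotated so that $\varphi_0$ is critical at $0'$.
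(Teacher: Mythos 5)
Your proposal captures the paper's main mechanism correctly: normalize at $\mathbf 0\in\partial U$ so that the outermost boundary is $\{y^1=\varphi_0(y')\}$ with $\varphi_0(0')=0$, $D_{y'}\varphi_0(0')=0'$, then show that all the other relevant boundaries can be written as graphs over the \emph{same} $B_R'$ because nearby boundary points of disjoint $C^{1,\gamma}$-domains have nearly opposite normals once $R$ is small (this is exactly Lemma~\ref{CTS6000}, i.e.\ \cite[Lemma~2.4]{JK_CV1}, combined with Lemma~\ref{GGG1000} showing $(C^{1,\gamma},8R,\theta)$-domains are $(\delta,R)$-Reifenberg flat), and then glue the local implicit-function-theorem graphs (Lemma~\ref{AAA1000}/\ref{BBB6000}) into one graph on $B_R'$. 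Your quantitative heuristic that the misalignment of tangent planes is $O(\theta R^\gamma)$, and that the Hölder constant $288n\theta$ comes from a dimensional factor on top of $\theta$, is consistent with the paper's $288n\theta=16\cdot 18n\theta$.

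There is, however, a genuine gap in the step where you assert that ``because of the laminar/nested structure \eqref{HHH6500}, inside the half-ball $U\cap B_R$ the subdomains $U_i$ that intersect $B_R$ are totally ordered by inclusion,'' so that the $W_{i_d}$ form a single stack. Condition \eqref{HHH6500} alone only gives a forest (each $U_i$ may have many mutually disjoint children), so the $U_i$ meeting $B_R$ are \emph{not} totally ordered as an abstract consequence of nesting. What makes the single-chain structure true is quantitative and boundary-specific: by Lemma~\ref{GGG1000} and Lemma~\ref{CTS7000}, at most two mutually disjoint Reifenberg-flat domains can intersect a small ball $B_R$; and since $\mathbf 0\in\partial U$, the complement $\mathbb{R}^n\setminus\overline U$ already occupies one of those two ``slots,'' so at most one disjoint child of any $U_{i_d}$ can meet $B_R$. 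This is exactly the content of Lemma~\ref{MMM1000} (the $m=0$ case), and without it the claimed stratification into $W_{i_0},\dots,W_{i_l}$ does not follow. This also explains why simply invoking Theorem~\ref{main theorem1} is awkward: the interior statement requires $B_R\subset U$ and yields a \emph{two-sided} chain ($U_{i_{-m}},\dots,U_{i_l}$), which is a genuinely different combinatorial shape than the one-sided chain needed here. So the ``bookkeeping'' you defer is in fact where the boundary hypothesis $\mathbf 0\in\partial U$ enters essentially, and it needs to be argued, not assumed.
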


\bigskip

By Theorem \ref{main theorem1} and Theorem \ref{main theorem2}, for the coefficients  $\displaystyle a_{ij} = \sum_{ 0 \leq k \leq K } a_{ij}^{k} \chi_{W_{k}}$, we have that
\begin{equation*}
a_{ij} = \sum_{ -m-k \leq d \leq l-k } a_{ij}^{i_{d+k}} \chi_{ \varphi_{d}(y') < y^{1} \leq \varphi_{d+1}(y') } 
\text{ a.e. in } U \cap B_{R}.
\end{equation*}
So one can use Theorem \ref{main theorem1} and \cite[Theorem 1.1]{LN_CPAM1} to obtain the following lemma.

\begin{cor}
For any $R \in (0,R_{0}]$ with  $R_{0} = R_{0}(n,\gamma,\theta,1) \in (0,1]$ in Theorem \ref{main theorem1}, let $U$ be composite $(C^{1,\gamma},80R,\theta)$-domain with subdomains $\{ U_{1}, \cdots, U_{K} \}$. Assume that $A_{ij}^{\alpha \beta} : \mathbb{R}^{n} \to \mathbb{R}$ 
$(1 \leq i,j \leq N$, $1 \leq \alpha, \beta \leq n)$ satisfy
\begin{equation*}
A_{ij}^{\alpha \beta}(x)
\xi_{\alpha} \xi_{\beta}
\eta^{i} \eta^{j}
\geq  \lambda |\xi|^{2} |\eta|^{2}
\quad (x \in \mathbb{R}^{n},\xi \in \mathbb{R}^{n},
\eta \in \mathbb{R}^{N})
\quad \text{and} \quad
\| A_{ij}^{\alpha \beta} \|_{L^{\infty}(\mathbb{R}^{n})} \leq \Lambda.
\end{equation*}
Then for any $\epsilon >0$ and a weak solution $u$ of
\begin{equation*}
\partial_{\alpha} (A_{ij}^{\alpha \beta} (x) \partial_{\beta} u^{j}) = h_{i} + \partial_{\beta} g_{i}^{\beta}
\end{equation*}
with $A_{ij}^{\alpha \beta} \in C^{\mu}(\overline{W_{k}})$, $h_{i} \in L^{\infty}(U)$, $g_{i}^{\beta} \in C^{\mu}(\overline{W_{k})}$ 
$(i = 1,\cdots, N, ~ k = 0,\cdots,K)$ and 
\begin{equation*}
\gamma' \in \left( 0, \min \left\{ \mu, \frac{\gamma}{2(\gamma+1)} \right\} \right],
\end{equation*}
there exists $c = c(n, N , K, \lambda, \Lambda, \mu, \gamma, \epsilon, \| A \|_{C^{\gamma'}(\overline{W_{i}})}, \theta) $ such that
\begin{equation*}
\sum_{0 \leq k \leq K}
\| u \|_{C^{1,\gamma'}(\overline{W_{k}} \cap U_{\epsilon})} 
\leq c \left( 
\| u \|_{L^{2}(U)}
+ \| h \|_{L^{\infty}(U)}
+ \sum_{0 \leq k \leq K}
\| g \|_{C^{\gamma'}(\overline{W_{k}})} \right),
\end{equation*}
and
\begin{equation*}
\| Du \|_{L^{\infty}(U_{\epsilon})} 
\leq c \left( 
\| u \|_{L^{2}(U)}
+ \| h \|_{L^{\infty}(U)}
+ \sum_{0 \leq k \leq K}
\| g \|_{C^{\gamma'}(\overline{W_{k}})} \right),
\end{equation*}
where 
\begin{equation*}
U_{\epsilon}
= \{ x \in U : \mathrm{dist ~} (x,\partial U) > \epsilon \}.
\end{equation*}
\end{cor}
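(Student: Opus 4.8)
The plan is to use Theorem~\ref{main theorem1} to recognize that the pair $(U;W_{0},\dots,W_{K})$ sits exactly in the geometric framework of \cite{LN_CPAM1}, to invoke the local estimate \cite[Theorem 1.1]{LN_CPAM1} on small balls, and then to globalize it on $U_{\epsilon}$ by a covering argument. Essentially all of the substance is already contained in Theorem~\ref{main theorem1} and in \cite[Theorem 1.1]{LN_CPAM1}, so what remains is a structural translation between their hypotheses and a careful bookkeeping of constants.

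The geometric step would go as follows. Fix $\epsilon>0$ and put $r:=\tfrac{1}{8}\min\{R,\epsilon\}$. For $x_{0}\in U_{\epsilon}$ one has $\mathrm{dist}(x_{0},\partial U)>\epsilon\ge 8r$, so $B_{8r}(x_{0})\subset U$; moreover $U$, being a composite $(C^{1,\gamma},80R,\theta)$-domain with $8r\le R$, is also a composite $(C^{1,\gamma},640r,\theta)$-domain (restricting a $C^{1,\gamma}$-graph from $B'_{80R}$ to $B'_{640r}$ does not increase its norm), and $8r\le R\le R_{0}(n,\gamma,\theta,1)$. Hence Theorem~\ref{main theorem1}, applied with $\tau=1$ at centre $x_{0}$ after a translation, produces a single rigid $y$-coordinate system and $C^{1,\gamma}$-functions $\varphi_{-p}\le\cdots\le\varphi_{q+1}$ on $B'_{8r}$ with $p+q+1\le K+1$, $\|D\varphi_{d}\|_{L^{\infty}}\le 1$ and $[D\varphi_{d}]_{C^{\gamma}}\le 288n\theta$, for which $U\cap B_{8r}(x_{0})$ is the disjoint union of the slabs $W_{i_{d}}\cap B_{8r}(x_{0})$ caught between consecutive graphs as in \eqref{MMM5400}. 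Read over all $x_{0}\in U_{\epsilon}$, this says that near each such point, at scale $8r$, the interfaces $\partial W_{k}$ are, in one common rigid frame, graphs of $C^{1,\gamma}$-functions over a common base with $C^{1,\gamma}$-norm at most $1+288n\theta$, with the $W_{k}$'s the consecutive regions between them --- precisely the configuration assumed in \cite{LN_CPAM1}.

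Next I would note that a rigid change of variables preserves the divergence form of the equation and the Legendre--Hadamard ellipticity, and changes the relevant $L^{\infty}$- and $C^{\gamma'}$-norms of $A_{ij}^{\alpha\beta},h_{i},g_{i}^{\beta}$ only by dimensional factors; so by the previous paragraph \cite[Theorem 1.1]{LN_CPAM1} applies on each $B_{8r}(x_{0})$ and yields, for every $\gamma'\in(0,\min\{\mu,\gamma/(2(\gamma+1))\}]$,
\[
\sum_{0\le k\le K}\|u\|_{C^{1,\gamma'}(\overline{W_{k}}\cap B_{2r}(x_{0}))}+\|Du\|_{L^{\infty}(B_{2r}(x_{0}))}\le c_{0}\,\mathcal R,\qquad \mathcal R:=\|u\|_{L^{2}(U)}+\|h\|_{L^{\infty}(U)}+\sum_{0\le k\le K}\|g\|_{C^{\gamma'}(\overline{W_{k}})},
\]
where $c_{0}=c_{0}(n,N,K,\lambda,\Lambda,\mu,\gamma,\gamma',\theta,\|A\|_{C^{\gamma'}(\overline{W_{i}})},r)$ is independent of $x_{0}$ (the local norms on the right having been enlarged to those over $U$). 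Since $\{B_{r}(x_{0}):x_{0}\in U_{\epsilon}\}$ covers $U_{\epsilon}$ and the target quantities are of supremum type, the $L^{\infty}$- and gradient-$L^{\infty}$-parts globalize by taking $\sup_{x_{0}}$ --- a supremum, not a sum, so that no dependence on $\mathrm{diam}\,U$ or on the cardinality of the cover enters; in particular $\|Du\|_{L^{\infty}(U_{\epsilon})}\le c_{0}\mathcal R$. For the seminorm $[Du]_{C^{\gamma'}(\overline{W_{k}}\cap U_{\epsilon})}$ I would split over $x,y\in\overline{W_{k}}\cap U_{\epsilon}$: if $|x-y|<r$, then $x,y\in\overline{W_{k}}\cap B_{2r}(x)$, where the displayed bound (centred at $x$) gives $|Du(x)-Du(y)|\le c_{0}\mathcal R\,|x-y|^{\gamma'}$; if $|x-y|\ge r$, then $|Du(x)-Du(y)|\le 2\|Du\|_{L^{\infty}(U_{\epsilon})}\le 2c_{0}r^{-\gamma'}\mathcal R\,|x-y|^{\gamma'}$. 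Adding the resulting estimates for $\|u\|_{C^{1,\gamma'}(\overline{W_{k}}\cap U_{\epsilon})}$ over $k=0,\dots,K$ costs only a factor $K+1$, and since $\gamma'$ is controlled by $\mu$ and $\gamma$, this would give both asserted inequalities with $c=c(n,N,K,\lambda,\Lambda,\mu,\gamma,\epsilon,\|A\|_{C^{\gamma'}(\overline{W_{i}})},\theta)$, the residual dependence of $r=\tfrac18\min\{R,\epsilon\}$ on $R$ being harmless and absorbed once $R$ is fixed.

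The two places that need genuine care, rather than routine work, are the matching steps. First, one must verify that the output of Theorem~\ref{main theorem1} with $\tau=1$ --- interfaces that are $C^{1,\gamma}$-graphs over a common base in a single rigid frame with $C^{1,\gamma}$-norm $\le 1+288n\theta$, and the $W_{k}$'s the consecutive slabs --- is an admissible input for \cite[Theorem 1.1]{LN_CPAM1}, which requires a uniform $C^{1,\gamma}$-bound on the interfaces but not smallness of their slope (this is why choosing $\tau=1$, hence the largest admissible $R_{0}$, already suffices here, rather than a small $\tau$). Second, one must confirm in the globalization that all the target quantities are of supremum type, so that the cover contributes at most the factor $K+1$ and the constant depends on the covering radius $\sim\min\{R,\epsilon\}$ but not on $\mathrm{diam}\,U$ or on the number of balls. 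I expect this, rather than any single estimate, to be the only delicate point; the boundary result, Theorem~\ref{main theorem2}, is not needed here, since $U_{\epsilon}$ keeps distance $\epsilon$ from $\partial U$ and every $B_{8r}(x_{0})$ with $x_{0}\in U_{\epsilon}$ lies inside $U$, so only interior interfaces occur, and Theorem~\ref{main theorem2} would enter only for estimates up to $\partial U$.
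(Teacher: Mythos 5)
Your proposal is correct and takes essentially the same approach as the paper: the paper offers no formal proof of the corollary beyond the remark that it follows by combining Theorem~\ref{main theorem1} with \cite[Theorem 1.1]{LN_CPAM1}, which is exactly the reduction you carry out, together with the routine covering and H\"older-seminorm patching argument you make explicit.
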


\medskip

We explain the main idea of this paper. For $C^{1,\gamma}$-domain $U_{1}$, there exists $y$-coordinate system such that $U_{1} \cap B_{R} = \{ y \in B_{R} : y^{1} > \psi(y') \}$ for some $C^{1,\gamma}$-function $\psi: B_{R}' \to \mathbb{R}$. Then for $C^{1,\gamma}$-domain $U_{2}$ with $U_{1} \cap U_{2} = \emptyset$, by \cite[Lemma 2.4]{JK_CV1}, the normals on $\partial U_{1}$ and $\partial U_{2}$ are almost opposite in $B_{R}$ if $R>0$ is sufficiently small. So by using the implicit function theorem, one can find $C^{1,\gamma}$-function $\varphi: B_{R}' \to \mathbb{R}$ such that $U_{2} \cap B_{R} = \{ y \in B_{R} : y^{1} < \varphi(y') \}$. By Definition \ref{intro_composite domain}, one of the followings holds for composite $(C^{1,\gamma},R,\theta)$-domains : (1) $U_{i} \subsetneq U_{j}$, (2) $U_{j} \subsetneq U_{i}$, (3) $U_{i} \cap U_{j} = \emptyset$. So we will prove Theorem \ref{main theorem1} and Theorem \ref{main theorem2} by applying the argument in this paragraph to $\mathbb{R}^{n} \setminus \overline{U_{j}}$ and $U_{i}$ when (1) holds, to $\mathbb{R}^{n} \setminus \overline{U_{i}}$ and $U_{j}$ when (2) holds, and to $U_{i}$ and $U_{j}$ when (3) holds instead of $U_{1}$ and $U_{2}$. 

Elliptic and parabolic equations in composite materials have been studied by many researchers, see for instance \cite{BASLD1,BV_SIAM1}. For piecewise gradient boundedness and gradient H\"{o}lder continuity for linear elliptic equation and systems related to composite $C^{1,\gamma}$-domain, we refer to \cite{LN_CPAM1,LV_ARMA1}. For Lipschitz regularity results for linear laminates, we refer to \cite{BK_AM1,BK_IMRN1,CKV1,D_ARMA1,DK_SIAM1}. Partial regularity result for monotone systems had been obtained in \cite{NS_arxiv1}. Gradient $L^{p}$-estimates for composite materials had been considered in \cite{JK_CV1,JK_MMAS,HLW,U_JD1,Z_NA1}. Also we refer to \cite{KLY_MA1,KL_MA1} for the blow up phenomena when the coefficients of an elliptic equation have critical values.

\section{Coordinate system for graph functions} \label{Coordinate system for graph functions}

The implicit function theorem give the existence of the graph function in a local neighborhood, but does not give an information about the precise size of that neighborhood. To control the size of that neighborhood, we derive Lemma \ref{AAA1000} and Lemma \ref{BBB6000}.

Let $S = \{ (\psi(x'),x') : x' \in B_{8R}' \}$ be a graph in $x$-coordinate system. Let $y$-coordinate system has the orthonormal basis $\{ W_{1} , \cdots, W_{n} \}$ and $O = ( W_{1}^{T}  \cdots W_{n}^{T} )$. We prove in Lemma \ref{AAA1000} that if the implicit function theorem can be applied to $zO$ for any $z \in S$ with respect to $y^{1}$-variable which gives a local existence of a graph function with respect to $y^{1}$-variable on $(zO)'$, then one can find a graph function defined in $B_{R}'$ with respect to $y^{1}$-variable. In Lemma \ref{AAA1000}, a point $z \in S$ in $x$-coordinate system is represented by $zO$ with respect to $y$-coordinate system.

\begin{lem}\label{AAA1000}
Let $\tau \in (0,1]$, $O \in \mathbb{R}^{n \times n}$ be an orthonormal matrix with $\det O >0$ and $C^{1}$-function $\psi : B_{8R}' \to \mathbb{R}$ be a given function. Then for the graph
\begin{equation}\label{AAA1100}
S = \{ (\psi(x'),x')  : x' \in B_{8R}' \} \subset \mathbb{R}^{n}
\end{equation}
assume that 
\begin{equation}\label{AAA1220}
S \cap B_{R} \not = \emptyset
\qquad \text{ and } \qquad 
O_{11} - \sum_{2 \leq j \leq n} O_{j1} D_{x^{j}} \psi \not = 0
\text{ in } B_{8 R}'.
\end{equation}
Also further assume that for any $z \in S$, there exist ball $U_{z} \subset  \mathbb{R}^{n-1}$ and $C^{1}$-function $\varphi_{z} : U_{z} \to \mathbb{R}$ satisfying 
\begin{equation}\label{AAA1240}
(zO)' \in U_{z},
\qquad 
(zO)^{1} = \varphi_{z} \big( (zO)' \big),
\qquad 
\|D_{y'} \varphi_{z}\|_{L^{\infty}(U_{z})} \leq \tau,
\end{equation}
\begin{equation}\label{AAA1260}
\big( \varphi_{z}(y'),y' \big) \cdot O_{1} - \psi \big( \big( \varphi_{z}(y'),y' \big) \cdot O_{2}, \cdots , \big( \varphi_{z}(y'),y' \big) \cdot O_{n} \big) =0
\qquad (y' \in U_{z}),
\end{equation}
and
\begin{equation}\label{AAA1250}
\big\{ \big( \varphi_{z}(y'),y' \big) : y' \in U_{z} \big\} \subset B_{8R}.
\end{equation} 
Then there exists $C^{1}$-function $\varphi : B_{R}' \to \mathbb{R}$ such that
\begin{equation}\label{AAA1270}
\big( \varphi(y'),y' \big) \cdot O_{1} - \psi \big( \big( \varphi(y'),y' \big) \cdot O_{2}, \cdots , \big( \varphi(y'),y' \big) \cdot O_{n} \big) =0
\qquad (y' \in B_{R}'),
\end{equation}
\begin{equation}\label{AAA1275}
(zO)^{1} = \varphi \big( (zO)' \big)
\qquad (z \in S \cap B_{R}),
\end{equation}
\begin{equation}\label{AAA1280}
\| D_{y'}\varphi \|_{L^{\infty}(B_{R}')} 
\leq \tau,
\end{equation}
and
\begin{equation}\label{AAA1290}
\big\{ \big( \varphi(y'),y' \big) : y' \in B_{R}' \big\} \subset B_{8R}.
\end{equation}
\end{lem}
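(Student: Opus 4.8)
The plan is to interpret the graph $S$ in the new $y$-coordinate system, show that the defining equation $F(y^1,y') := (y^1,y')\cdot O_1 - \psi((y^1,y')\cdot O_2,\dots,(y^1,y')\cdot O_n) = 0$ has $\partial_{y^1} F = O_{11} - \sum_{2\le j\le n} O_{j1} D_{x^j}\psi \neq 0$ by hypothesis \eqref{AAA1220}, so $S$ is locally a $y^1$-graph everywhere it meets $B_{8R}$. The local graph functions $\varphi_z$ provided in the hypotheses are, by \eqref{AAA1260}, all solving this same equation $F=0$; by the uniqueness part of the implicit function theorem they agree on overlaps. So the real content is a \emph{continuation/patching} argument: glue the $\varphi_z$ into a single function $\varphi$ defined on all of $B_R'$, with the uniform slope bound $\|D_{y'}\varphi_z\|_\infty \le \tau$ preserved, and with the graph staying inside $B_{8R}$.

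**First I would** fix a point $z_0 \in S \cap B_R$ (nonempty by \eqref{AAA1220}) and consider, for a given $y' \in B_R'$, the straight segment in $\mathbb{R}^{n-1}$ from $(z_0 O)'$ to $y'$; I want to define $\varphi(y')$ by continuing the solution of $F(\cdot, \cdot) = 0$ along this segment. Concretely, let $T$ be the supremum of $t \in [0,1]$ such that there is a $C^1$ solution $\varphi$ of $F(\varphi(s),s)=0$ on the sub-segment parametrized by $[0,t]$ with $\varphi((z_0O)')=(z_0O)^1$ and $\|D\varphi\|_\infty \le \tau$ along it. The slope bound $\tau \le 1$ forces the graph to stay in a cone, so a short computation (using $|(z_0O)| < R$ and $|y'| < R$) keeps the point $(\varphi(s),s)$ inside $B_{8R}$ — this is where the factor $8$ in $B_{8R}$ versus $B_R$ is spent, giving room for the continuation never to exit the domain of $\psi$. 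At the endpoint $s(T)$, provided it is still interior, the point $(\varphi(s(T)),s(T))$ lies on $S$ (since $F=0$ means exactly this, after undoing the coordinate change), so one of the given local graphs $\varphi_z$ with $z = (\varphi(s(T)),s(T))$ applies in a neighborhood of $s(T)$; this lets the continuation proceed past $T$ unless $T=1$, so $T=1$. Doing this for every $y'\in B_R'$ and invoking local uniqueness of solutions of $F=0$ (from $\partial_{y^1}F \ne 0$) shows the values are independent of the path and assemble into a well-defined $\varphi: B_R' \to \mathbb{R}$; its $C^1$-regularity and the bound \eqref{AAA1280} are inherited pointwise from the local pieces, \eqref{AAA1270} is $F=0$ rewritten, \eqref{AAA1275} comes from uniqueness applied at points of $S\cap B_R$, and \eqref{AAA1290} is the cone confinement.

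**The main obstacle** I expect is making the continuation argument clean: one must ensure that whenever the continuation reaches a new point it actually lands \emph{on} $S$ (so that a hypothesis-supplied local chart $\varphi_z$ is available), and that the slope constraint $\|D\varphi\|_\infty \le \tau$ is not merely an a priori assumption in the definition of $T$ but is automatically propagated — this follows because each local $\varphi_z$ already satisfies it, so on overlaps the glued function does too, but it has to be phrased so there is no circularity. A secondary technical point is the geometric bookkeeping showing $(\varphi(s),s) \in B_{8R}$ throughout: with $|s|<R$ and $|\varphi(s) - \varphi((z_0O)')| \le \tau|s - (z_0O)'| \le 2R$ and $|\varphi((z_0O)')| = |(z_0O)^1| < R$, one gets $|(\varphi(s),s)| \le |s| + |\varphi(s)| < R + 3R = 4R < 8R$, comfortably inside, so the estimate is robust. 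I would also double-check the orientation hypothesis $\det O > 0$ is used only to fix which side of the graph corresponds to $y^1 > \varphi(y')$ and plays no essential role in the existence argument itself.
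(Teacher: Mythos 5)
Your proposal is correct and is essentially the same argument as the paper's: both are continuation/patching proofs driven by (i) local uniqueness of solutions of $F(y^1,y')=0$ from the nonvanishing of $\partial_{y^1}F$ (hypothesis \eqref{AAA1220}), (ii) the cone confinement from $\|D_{y'}\varphi_z\|_\infty\le\tau\le 1$ to keep the continued graph inside $B_{8R}$, and (iii) a compactness/limit step showing the continuation reaches a point of $S$, where the hypothesis supplies a fresh local chart to extend. The paper packages the continuation as proving $\overline{B_R'}\subset\bigcup_z U_z$ (by following a segment toward a putative boundary point and using Bolzano--Weierstrass to produce a limit on $S$) and then glues with a partition of unity, whereas you frame it as showing the continuation time $T$ along a fixed segment equals $1$; these are the same idea in different clothing, and your handling of the potential circularity in the slope bound (automatic propagation via uniqueness plus the hypothesis on $\varphi_z$) matches how the paper implicitly avoids the issue by always working with the given $\varphi_z$'s.
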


\begin{proof}
Let $U = \cup_{z \in S} U_{z}$. For any $z' \in U$, we define $\phi_{z'} : B'_{\rho_{z'}}(z') \to \mathbb{R}$ in the following way. Since $U = \cup_{z \in S} U_{z}$, there exists $\tilde{z} \in S$ and $B'_{\rho_{z'}}(z')$ such that $ B'_{\rho_{z'}}(z') \subset U_{\tilde{z}}$. Then we set
\begin{equation}\label{AAA1320}
\phi_{z'} = \varphi_{\tilde{z}}
\quad \text{in} \quad B'_{\rho_{z'}}(z').
\end{equation}
So by \eqref{AAA1240}, \eqref{AAA1260} and \eqref{AAA1250}, we have that
\begin{equation}\label{AAA1340}
\|D_{y'} \phi_{z'}\|_{L^{\infty}(B'_{\rho_{z'}}(z'))} 
\leq \| D_{y'} \varphi_{\tilde{z}} \|_{L^{\infty}(U_{\tilde{z}})} 
\leq \tau,
\end{equation}
\begin{equation}\label{AAA1360}
\big( \phi_{z'}(y'),y' \big) \cdot O_{1} - \psi \big( \big( \phi_{z'}(y'),y' \big) \cdot O_{2}, \cdots , \big( \phi_{z'}(y'),y' \big) \cdot O_{n} \big) =0
~ \text{ in } ~
B'_{\rho_{z'}}(z'),
\end{equation}
and
\begin{equation}\label{AAA1350}
\big\{ \big( \phi_{z'}(y'),y' \big) : y' \in B'_{\rho_{z'}}(z') \big\} \subset B_{8R}.
\end{equation} 
We claim that for $z', \tilde{z}'\in U$, if $\phi_{z'} : B'_{\rho_{z'}}(z') \to \mathbb{R}$ and $\phi_{\tilde{z}'} : B'_{\rho_{\tilde{z}'}}(\tilde{z}') \to \mathbb{R}$ satisfy \eqref{AAA1360} for $z'$ and $\tilde{z}'$ respectively then
\begin{equation}\label{AAA3000}
\phi_{z'}(y')  = \phi_{\tilde{z}'}(y') \text{ for any } y' \in B'_{\rho_{z'}}(z') \cap B'_{\rho_{\tilde{z}'}}(\tilde{z}').
\end{equation}
Suppose not. Then by \eqref{AAA1360}, there exists $z', \tilde{z}' \in l_{m}$ such that
\begin{equation}\label{AAA3100}
\phi_{z'}(y') \not = \phi_{\tilde{z}'}(y') 
\text{ for some } y' \in B'_{\rho_{z'}}(z') \cap B'_{\rho_{\tilde{z}'}}(\tilde{z}')
\end{equation}
and
\begin{equation}\begin{aligned}\label{AAA3200}
& \big( \phi_{\tilde{z}'}(y'),y' \big) \cdot O_{1} 
- \big( \phi_{z'}(y'),y' \big) \cdot O_{1} \\
& \quad = \psi \big( \big( \phi_{\tilde{z}'}(y'),y' \big) \cdot O_{2}, \cdots , \big( \phi_{\tilde{z}'}(y'),y' \big) \cdot O_{n} \big) \\
& \qquad - \psi \big( \big( \phi_{z'}(y'),y' \big) \cdot O_{2}, \cdots , \big( \phi_{z'}(y'),y' \big) \cdot O_{n} \big).
\end{aligned}\end{equation}
By a direct calculation,
\begin{equation*}\begin{aligned}
\Big( \big( \phi_{z'}(y'),y' \big) \cdot O_{2},
\cdots,
\big( \phi_{z'}(y'),y' \big)  \cdot O_{n} \Big) 
= \Big( \big( \phi_{z'}(y'),y' \big) O^{T} \Big)',
\end{aligned}\end{equation*}
and
\begin{equation*}\begin{aligned}
\Big( \big( \phi_{\tilde{z}'}(y'),y' \big) \cdot O_{2},
\cdots,
\big( \phi_{\tilde{z}'}(y'),y' \big)  \cdot O_{n} \Big) 
= \Big( \big( \phi_{\tilde{z}'}(y'),y' \big) O^{T} \Big)'.
\end{aligned}\end{equation*}
So we find from \eqref{AAA3200} that
\begin{equation*}\begin{aligned}
& \big( \phi_{\tilde{z}'}(y') - \phi_{z'}(y') \big) \cdot O_{11} \\
& \quad = \int_{0}^{1} \sum_{2 \leq k \leq n} D_{x^{k}} \psi \Big( t \big( \big( \phi_{\tilde{z}'}(y'),y' \big) O^{T} \big)' + (1-t) \big( \big( \phi_{z'}(y'),y' \big) O^{T} \big)' \Big) \, dt
\cdot \big( \phi_{\tilde{z}'}(y') - \phi_{z'}(y') \big)
O_{k1},
\end{aligned}\end{equation*}
which implies
\begin{equation*}\begin{aligned}
& \big( \phi_{\tilde{z}'}(y') - \phi_{z'}(y') \big) \cdot \left( O_{11} - \sum_{2 \leq j \leq n} \int_{0}^{1} O_{j1} D_{x^{j}} \psi \Big( t  \big( \big( \phi_{\tilde{z}'}(y'),y' \big) O^{T} \big)' + (1-t) \big( \big( \phi_{z'}(y'),y' \big) O^{T} \big)' \Big) \, dt \right) =0.
\end{aligned}\end{equation*}
In view of \eqref{AAA1350}, the fact that $z',\tilde{z}' \in B_{R}'$ gives that for any $ t \in [0,1]$, we have that $(1-t) \big( \big( \phi_{\tilde{z}'}(y'),y' \big) O^{T} \big)' + t \big( \big( \phi_{z'}(y'),y' \big) O^{T} \big)' \in B_{8R}'$. So it follows from \eqref{AAA1220} that
\begin{equation*}\begin{aligned}
\int_{0}^{1}  \left( O_{11} - \sum_{2 \leq j \leq n} O_{j1} D_{x^{j}} \psi \Big( t  \big( \big( \phi_{\tilde{z}'}(y'),y' \big) O^{T} \big)' + (1-t) \big( \big( \phi_{z'}(y'),y' \big) O^{T} \big)' \Big) \right) \, dt \not = 0,
\end{aligned}\end{equation*}
and we obtain $\phi_{\tilde{z}'}(y') = \phi_{z'}(y')$, 
which contradicts \eqref{AAA3100}. So \eqref{AAA3000} follows. 

Next, we claim that 
\begin{equation}\label{AAA1500}
\overline{B_{R}'} \subset U.
\end{equation}
Suppose not. Then $\overline{B_{R}'} \not \subset U$ and there exists a point $p'$ such that
\begin{equation}\label{AAA1600}
p' \in  U^{c} \cap \overline{B_{R}'}.
\end{equation}
Next, we will choose a point $q'$ in $U \cap \overline{B_{R}'}$ and a point $w'$ on $\partial U \cap \overline{B_{R}'}$. By \eqref{AAA1220}, there exists $\bar{z} \in S \cap B_{R}$. From the fact that $\bar{z} \in B_{R}$,
\begin{equation*}
(\bar{z} O)' \in B_{R}'.
\end{equation*}
Also from the fact that $\bar{z} \in S$,  
\begin{equation*}
(\bar{z} O)' \in U_{ \bar{z} O} \subset U.
\end{equation*}
So by combining the above two inclusions,
\begin{equation}\label{AAA1700}
q' : = (\bar{z}  O)' \in U \cap B_{R}'.
\end{equation}
Let $\mathit{l} : [0,1] \to \overline{B_{R}'}$ be the line connecting from $q'$ to $p'$. Then from \eqref{AAA1600} and \eqref{AAA1700}, we find that $\mathit{l} \cap \partial U \not = \emptyset$. Let
\begin{equation}\label{AAA1800}
w' \in \overline{B_{R}'} \cap \partial U
\end{equation}
be the point which the line $\mathit{l}$ from $q'$ to $p'$ first meets $\partial U$. 

\medskip

To prove \eqref{AAA1500}, we will choose $w^{1} \in \mathbb{R}$ so that
\begin{equation}\label{AAA2000}
(w^{1},w') O^{T}  \in S.
\end{equation} 
If \eqref{AAA2000} holds, then by the fact that $(zO)' \in U_{z} \subset U$ for any $z \in S$, we have that 
\begin{equation*}
w' = \left( (w^{1},w') O^{T} O \right)' \in U_{(w^{1},w') O^{T}} \subset U,
\end{equation*}
and a contradiction occurs from \eqref{AAA1800}. So the claim \eqref{AAA1500} follows. To prove \eqref{AAA1500}, we will show \eqref{AAA2000}.

\medskip

Since $w' \in \overline{U} \cap \overline{B_{R}'} = \overline{ U \cap B_{R}'}$, one can choose a sequence $\{ w_{m}' \}_{m=1}^{\infty}$ so that 
\begin{equation}\label{AAA1900}
w_{m}' \in U \cap B_{R}'
\qquad \text{and} \qquad w_{m}' \to w'.
\end{equation}
For a fixed $w_{m}' \in U \cap B_{R}'$, let $\mathit{l}_{m} \subset U \cap B_{R}'$ be the line connecting from $q'$ to $w_{m}'$. Then for any point $z' \in \mathit{l}_{m} \subset U$, by \eqref{AAA1340} and \eqref{AAA1360}, there exist  $B'_{\rho_{z'}}(z') \subset U$ and $C^{1}$-function $\phi_{z'} : B'_{\rho_{z'}}(z') \to \mathbb{R}$ such that 
\begin{equation}\label{AAA2300}
\big( \phi_{z'}(y'),y' \big) \cdot O_{1} - \psi \big( \big( \phi_{z'}(y'),y' \big) \cdot O_{2}, \cdots , \big( \phi_{z'}(y'),y' \big) \cdot O_{n} \big) =0 
\text{ in } B'_{\rho_{z'}}(z')
\end{equation}
and
\begin{equation}\label{AAA2400}
\| D_{y'} \phi_{z'} \|_{L^{\infty}(B'_{\rho_{z'}}(z'))} \leq \tau.
\end{equation}
Since $z' \in l_{m}$ was arbitrary chosen in \eqref{AAA2300} and $l_{m}$ is compact, by using partition of unity, one can choose functions $\eta_{i} : B'_{\rho_{z_{i}'}}(z_{i}') \to \mathbb{R}$ for $i \in [1,I]$ so that 
\begin{equation*}
\eta_{i} \in C_{c}^{\infty}(B'_{\rho_{z_{i}'}}(z_{i}')),
\quad
0 \leq \eta_{i} \leq 1
\quad \text{and} \quad 
\sum_{1 \leq i \leq I} \eta_{i} = 1
\text{ on } l_{m},
\end{equation*}
where $\phi_{z_{i}'}$ and $B'_{\rho_{z_{i}'}}(z_{i}')$  in \eqref{AAA2300} chosen for $z_{i}'$ instead of $z'$. So we take 
\begin{equation}\label{AAA2500}
\phi_{m} =  \sum_{1 \leq i \leq I} \phi_{z_{i}'} \eta_{i} 
\quad \text{on} \quad l_{m}.
\end{equation}

By \eqref{AAA2300}  and \eqref{AAA3000},
\begin{equation}\label{AAA2650}
\phi_{m} = \phi_{z_{i}'}
\quad \text{in} \quad B'_{\rho_{z_{i}'}}(z_{i}').
\end{equation}
Since $w_{m}' \in l_{m}$, we have  that
\begin{equation*}
\big( \phi_{w_{m}'}(w_{m}'),w_{m}' \big) \cdot O_{1} - \psi \big( \big( \phi_{w_{m}'}(w_{m}'),w_{m}' \big) \cdot O_{2}, \cdots , \big( \phi_{w_{m}'}(w_{m}'),w_{m}' \big) \cdot O_{n} \big) =0.
\end{equation*}
So it follows from \eqref{AAA2650}  that
\begin{equation}\label{AAA2700}
(\phi_{m} (w_{m}'),w_{m}') \cdot O_{1}  = \psi \big( (\phi_{m} (w_{m}'),w_{m}') \cdot O_{2}, \cdots ,  (\phi_{m} (w_{m}'),w_{m}') \cdot O_{n} \big).
\end{equation}
By \eqref{AAA1700}, we have that $\bar{z} \in S$ and $(\bar{z}O)' = q' \in l_{m}$. So by \eqref{AAA1320}, \eqref{AAA2500} and  \eqref{AAA3000},
\begin{equation}\label{AAA2600}
(\bar{z} O)^{1} 
= \varphi_{\bar{z}} \left( (\bar{z}O)' \right)
= \phi_{q'} \left( (\bar{z}O)' \right)
= \phi_{m}\big( (\bar{z} O)' \big) ,
\end{equation}
From \eqref{AAA2400}, \eqref{AAA3000} and the fact that $\sum_{1 \leq i \leq I} \eta_{i} = 1$, we find that
\begin{equation*}
\| D_{y'}\phi_{m} \|_{L^{\infty}(l_{m})} 
= \left\| \sum_{1 \leq i \leq I} (D_{y'}\phi_{z_{i}'}) \eta_{i} \right\|_{L^{\infty}(l_{m})}
\leq \tau 
\leq 1.
\end{equation*}
So from \eqref{AAA2600} and the fact that $|\bar{z}|, |q'|, |w_{m}'| < R$, we have
\begin{equation*}\begin{aligned}
|\phi_{m}(w_{m}')| 
& \leq |(\bar{z} O)^{1}| + |(\bar{z} O)^{1} - \phi_{m}(w_{m}')| \\
& = |(\bar{z} O)^{1}| + |\phi_{m} (q') -  \phi_{m}(w_{m}')| \leq R + 4R 
\leq 5R.
\end{aligned}\end{equation*}
Thus by Bolzano-Weierstrass theorem, there exists a subsequence of $\{ w_{m}' \}_{m=1}^{\infty}$ which still denoted by $\{ w_{m}' \}_{m=1}^{\infty}$ such that
\begin{equation}\label{AAA4000}
\phi_{m}(w_{m}') \xrightarrow{m \to \infty} w^{1}
\qquad \text{and} \qquad |w^{1}| \leq 5R.
\end{equation}
It follows from \eqref{AAA1900} and \eqref{AAA4000} that
\begin{equation}\label{AAA4100}
\big( \phi_{m}(w_{m}'), w_{m}' \big) \xrightarrow{m \to \infty} (w^{1},w') = w \in B_{8R}.
\end{equation}
From \eqref{AAA2700} and \eqref{AAA4100}, the continuity of $\psi$ gives that
\begin{equation*}
0 = w \cdot O_{1} - \psi \big( w \cdot O_{2}, \cdots, w \cdot O_{n} \big).
\end{equation*}
Since $( w \cdot O_{1}, \cdots, w \cdot O_{n}) = wO^{T} $, it follows that $0 = (wO^{T})^{1} - \psi \big( (wO^{T})' \big)$. So by the definition of $S$ and the fact that $(wO^{T})' \in B_{8R}'$,
\begin{equation*}
wO^{T}  = \left( (wO^{T})^{1}, (wO^{T})' \right)
= \left( \psi \big( (wO^{T})' \big) , (wO^{T})' \right) \in S,
\end{equation*}
and by the assumption of the lemma, we have
\begin{equation*}
w' = \big( (wO^{T}) O \big)' \in U_{wO^{T}} \subset U,
\end{equation*}
and we have a contradiction from \eqref{AAA1800}.  So the claim \eqref{AAA1500} holds.

We next show that \eqref{AAA1270}, \eqref{AAA1275}, \eqref{AAA1280} and \eqref{AAA1290}.  Since $\overline{B_{R}'}$ is compact, by using partition of unity and \eqref{AAA1500}, one can choose finite number of functions $\eta_{i} \in C_{c}^{\infty}(B'_{\rho_{z_{i}'}}(z_{i}'))$ for $1 \leq i \leq I$ such that $0 \leq \eta_{i} \leq 1$ and $\sum_{1 \leq i \leq I} \eta_{i} = 1$ in $\overline{B_{R}'}$. So we take 
\begin{equation}\label{AAA5000}
\varphi =  \sum_{1 \leq i \leq I} \phi_{z_{i}'} \eta_{i} 
\quad \text{ in } \quad \overline{B_{R}'}.
\end{equation}
It follows from \eqref{AAA3000} and \eqref{AAA5000} that
\begin{equation}\label{AAA5100}
\varphi = \phi_{z_{i}'} 
\quad \text{ in } \quad
B'_{\rho_{z_{i}'}}(z_{i}').
\end{equation}
So by \eqref{AAA1340}, \eqref{AAA1360} and the fact that $\{ B'_{\rho_{z_{i}'}}(z_{i}') \}_{1 \leq i \leq I}$ is a covering of $\overline{B_{R}'}$,
\begin{equation*}
\big( \varphi(y'),y' \big) \cdot O_{1} - \psi \big( ( \varphi(y'),y') \cdot O_{2}, \cdots , ( \varphi(y'),y' ) \cdot O_{n} \big) =0
\qquad (y' \in \overline{B_{R}'}),
\end{equation*}
and
\begin{equation*}
\| D_{y'}\varphi \|_{L^{\infty}(\overline{B_{R}'})} 
= \left\| \sum_{1 \leq i \leq I} (D\phi_{z_{i}'}) \eta_{i} \right\|_{L^{\infty}(\overline{B_{R}'})}
\leq \tau.
\end{equation*}
Thus \eqref{AAA1270} and \eqref{AAA1280} follows. So it only remains to prove \eqref{AAA1275} and \eqref{AAA1290}. Since $\sum\limits_{1 \leq i \leq I} \eta_{i} = 1$ in $\overline{B_{R}'}$, for any $\tilde{z} \in S \cap B_{R}$, there exists $z_{i}'$ with $(\tilde{z}O)' \in  B'_{\rho_{z_{i}'}}(z_{i}')$. So \eqref{AAA1320}, \eqref{AAA3000} and \eqref{AAA5100} imply that 
\begin{equation*}
(\tilde{z}O)^{1} = \varphi_{\tilde{z}} \big( (\tilde{z}O)' \big) = \phi_{(\tilde{z}O)'} \big( (\tilde{z}O)' \big)
= \phi_{z_{i}'} \big( (\tilde{z}O)' \big)
= \varphi \big( (\tilde{z}O)' \big),
\end{equation*}
and we obtain \eqref{AAA1275}. Since $\sum\limits_{1 \leq i \leq I} \eta_{i} = 1$ in $\overline{B_{R}'}$, $0 \leq \eta_{i} \leq 1$, and $\eta_{i} \in C_{c}^{\infty}(B'_{\rho_{z_{i}'}}(z_{i}'))$, we have from \eqref{AAA1350} and \eqref{AAA5000} that 
\begin{equation*}\begin{aligned}
\big\{ \big( \varphi(y'),y' \big) : y' \in B_{R}' \big\}
& = \left\{ \sum_{1 \leq i \leq I} \left( \phi_{z_{i}'}(y'),y' \right) \eta_{i}(y') : y' \in B_{R}' \right\} \subset B'_{8R}.
\end{aligned}\end{equation*} 
and \eqref{AAA1290} follows. 
\end{proof}

In Lemma \ref{BBB6000}, we prove that for an individual component in a neighborhood, there exists a coordinate system such that the boundary becomes almost flat graph.

\begin{lem}\label{BBB6000}
Let $\psi : B_{8R}' \to \mathbb{R}$ be $C^{1,\gamma}$-function with 
\begin{equation}\label{BBB6100}
S = \{ ( \psi(x'),x') \in B_{8R} : x \in B_{8R}' \}.
\end{equation}
Assume that
\begin{equation}\label{BBB6200}
S \cap B_{R} \not = \emptyset
\quad \text{and} \quad
n [ D_{x'}\psi]_{C^{\gamma}(B_{8R}')} (16R)^{\gamma} \leq 1/4.
\end{equation}
Then there exist an orthonormal matrix $O \in \mathbb{R}^{n \times n}$ with $\det O >0$ and $C^{1,\gamma}$-function $\varphi : B_{R}' \to \mathbb{R}$ such that 
\begin{equation}\label{BBB6300}
\big( \varphi(y'),y' \big) \cdot O_{1} - \psi \big( \big( \varphi(y'),y' \big) \cdot O_{2}, \cdots , \big( \varphi(y'),y' \big) \cdot O_{n} \big) =0
\qquad (y' \in B_{R}'),
\end{equation}
\begin{equation}\label{BBB6380}
(zO)^{1} = \varphi \big( (zO)' \big) 
\qquad (z \in S \cap B_{R}),
\end{equation}
\begin{equation}\label{BBB6350}
O_{1} \cdot e_{1} >0,
\end{equation}
and
\begin{equation}\label{BBB6360}
\big\{ \big( \varphi(y'),y' \big) : y' \in B_{R}' \big\} \subset B_{8R},
\end{equation} 
with the estimate
\begin{equation}\label{BBB6370}
|\varphi(0')| < R,
\qquad 
D_{y'}\varphi(0')=0',
\qquad \| D_{y'}\varphi \|_{L^{\infty}(B_{R}')} 
\leq 2\sqrt{n}
\end{equation}
and
\begin{equation}\label{BBB6400}
[D_{y'}\varphi]_{C^{\gamma}(B_{R}')} 
\leq \frac{ 18n [D_{y'}\psi]_{C^{\gamma}(B_{8R}')} }{ \| (-1,D_{y'} \psi ) \|_{L^{\infty}(B_{8R}')} }.
\end{equation}
\end{lem}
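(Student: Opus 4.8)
The plan is to rotate coordinates so that the new $y^{1}$-axis points along the unit normal of $S$ at a carefully chosen point, invoke Lemma \ref{AAA1000} for the global existence of the graph function $\varphi$ on $B_{R}'$, and then read off the remaining identities and estimates by implicit differentiation of \eqref{BBB6300}. To pick the base point I would look at the $C^{1}$ function $d(x')=\psi(x')^{2}+|x'|^{2}$ on $\overline{B_{R}'}$, the squared distance from the origin to the point $(\psi(x'),x')\in S$. Since $S\cap B_{R}\neq\emptyset$ there is $\bar{x}'\in B_{R}'$ with $d(\bar{x}')<R^{2}$, while $d\geq R^{2}$ on $\partial B_{R}'$; hence $d$ attains its minimum at an interior point $x_{0}'\in B_{R}'$, where $\nabla d(x_{0}')=2\psi(x_{0}')D_{x'}\psi(x_{0}')+2x_{0}'=0'$. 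Consequently $z_{0}:=(\psi(x_{0}'),x_{0}')=\psi(x_{0}')\,(1,-D_{x'}\psi(x_{0}'))$ belongs to $S\cap B_{R}$ and equals a scalar multiple $z_{0}=t_{0}W_{1}$ of the unit vector $W_{1}:=(1,-D_{x'}\psi(x_{0}'))/\|(1,-D_{x'}\psi(x_{0}'))\|$, which has $W_{1}\cdot e_{1}>0$. I complete $W_{1}$ to an orthonormal basis $W_{1},\dots,W_{n}$ of $\mathbb{R}^{n}$, permuting $W_{2},\dots,W_{n}$ if needed so that $O=(W_{1}^{T}\cdots W_{n}^{T})$ has $\det O>0$; this defines the $y$-coordinate system and gives \eqref{BBB6350}. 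Note \eqref{BBB6200} yields $|D_{x'}\psi(x')-D_{x'}\psi(x_{0}')|\leq[D_{x'}\psi]_{C^{\gamma}(B_{8R}')}(16R)^{\gamma}\leq 1/(4n)$ for every $x'\in B_{8R}'$, so the normal $(1,-D_{x'}\psi(x'))$ of $S$ deviates from $W_{1}$ by an angle $\theta_{x'}$ with $\sin\theta_{x'}\leq 1/(4n)$.

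Next I would verify the hypotheses of Lemma \ref{AAA1000} with $\tau=1$. The condition $S\cap B_{R}\neq\emptyset$ is given, and a direct computation gives $O_{11}-\sum_{2\leq j\leq n}O_{j1}D_{x^{j}}\psi=\|(1,-D_{x'}\psi)\|\cos\theta_{x'}\neq 0$ on $B_{8R}'$ because $\theta_{x'}<\pi/2$. For the local graphs, for each $z\in S$ the implicit function theorem applied to the function in \eqref{AAA1260} (whose $y^{1}$-derivative at $z$ is the nonvanishing quantity just discussed) produces, on a small ball $U_{z}\ni(zO)'$, a $C^{1}$ function $\varphi_{z}$ with $(zO)^{1}=\varphi_{z}((zO)')$, satisfying \eqref{AAA1260} and, after shrinking $U_{z}$, both $\{(\varphi_{z}(y'),y'):y'\in U_{z}\}\subset B_{8R}$ (since $|zO|=|z|<8R$ as $z\in S$) and $\|D_{y'}\varphi_{z}\|_{L^{\infty}(U_{z})}\leq\tau$; the last bound holds because implicit differentiation gives $|D_{y'}\varphi_{z}|=\tan\theta_{x'}<1$ at the corresponding $x'$. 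Lemma \ref{AAA1000} then supplies $\varphi:B_{R}'\to\mathbb{R}$ fulfilling \eqref{BBB6300} (which is \eqref{AAA1270}), \eqref{BBB6380} (which is \eqref{AAA1275}), \eqref{BBB6360} (which is \eqref{AAA1290}), and $\|D_{y'}\varphi\|_{L^{\infty}(B_{R}')}\leq 1\leq 2\sqrt{n}$.

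It remains to establish $|\varphi(0')|<R$, $D_{y'}\varphi(0')=0'$, the $C^{1,\gamma}$-regularity of $\varphi$, and \eqref{BBB6400}. From $z_{0}=t_{0}W_{1}$ and orthonormality, $z_{0}O=t_{0}W_{1}O=t_{0}e_{1}$, so $(z_{0}O)'=0'$ and $(z_{0}O)^{1}=t_{0}$; by \eqref{BBB6380}, $\varphi(0')=t_{0}$ and hence $|\varphi(0')|=|z_{0}|<R$. Differentiating \eqref{BBB6300} in $y^{k}$ for $2\leq k\leq n$ gives
\begin{equation*}
\partial_{y^{k}}\varphi(y')=\frac{\sum_{2\leq l\leq n}D_{x^{l}}\psi(x'(y'))\,W_{k}^{l}-W_{k}^{1}}{W_{1}^{1}-\sum_{2\leq l\leq n}D_{x^{l}}\psi(x'(y'))\,W_{1}^{l}},
\end{equation*}
where $x'(y'):=((\varphi(y'),y')O^{T})'$ is the argument of $\psi$ in \eqref{BBB6300} and $W_{k}^{l}$ denotes the $l$-th component of $W_{k}$; note $x'(0')=(z_{0})'=x_{0}'$ by \eqref{BBB6380}. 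At $y'=0'$ the identity $D_{x^{l}}\psi(x_{0}')=-\|(1,-D_{x'}\psi(x_{0}'))\|\,W_{1}^{l}$ together with $W_{1}\cdot W_{k}=0$ makes the numerator vanish, so $D_{y'}\varphi(0')=0'$. Finally, the displayed formula shows $D_{y'}\varphi(y')=\Phi(D_{x'}\psi(x'(y')))$ with $\Phi_{k}(p)=((-1,p)\cdot W_{k})/((1,-p)\cdot W_{1})$ a smooth rational map; its denominator satisfies $(1,-p)\cdot W_{1}=\|(1,-p)\|\cos\theta$ on the range of $D_{x'}\psi$, hence is bounded below by a positive multiple of $\|(-1,D_{x'}\psi)\|_{L^{\infty}(B_{8R}')}$ and above by $\|(-1,D_{x'}\psi)\|_{L^{\infty}(B_{8R}')}$, and $y'\mapsto x'(y')$ is Lipschitz with constant $\leq\sqrt{1+\|D_{y'}\varphi\|_{L^{\infty}}^{2}}\leq\sqrt{2}$. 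Since $p\mapsto(-1,p)\cdot W_{k}$ and $p\mapsto(1,-p)\cdot W_{1}$ are affine, the difference quotient $\Phi_{k}(p_{1})-\Phi_{k}(p_{2})$ has numerator bounded by a constant times $|p_{1}-p_{2}|\,\|(-1,D_{x'}\psi)\|_{L^{\infty}(B_{8R}')}$ and denominator a positive multiple of $\|(-1,D_{x'}\psi)\|_{L^{\infty}(B_{8R}')}^{2}$; combined with the $C^{\gamma}$-continuity of $D_{x'}\psi$, this gives $D_{y'}\varphi\in C^{\gamma}$ (so $\varphi\in C^{1,\gamma}$) and, after tracking the dimensional constants, the bound \eqref{BBB6400}.

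I expect the crux to be twofold. First, the choice of base point: recognizing that $x_{0}'$ should minimize the distance-to-origin functional on $S$ is exactly what forces $z_{0}\parallel W_{1}$, which is in turn what produces both $\varphi(0')=t_{0}$ and, via the cancellation $W_{1}\cdot W_{k}=0$, the identity $D_{y'}\varphi(0')=0'$. Second, the last estimate: obtaining the factor $\|(-1,D_{y'}\psi)\|_{L^{\infty}}^{-1}$ in \eqref{BBB6400} requires exploiting the affine structure, hence the cancellation, in the difference quotient of $\Phi$ rather than a crude $\|D\Phi\|_{L^{\infty}}$ bound, together with careful bookkeeping of $n$ and of the Lipschitz norm of the change of variables.
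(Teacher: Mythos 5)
Your proof is correct and follows essentially the same path as the paper's: pick the base point on $S$ that is closest to the origin (your interior-critical-point argument for $d(x')=\psi(x')^2+|x'|^2$ is the same calculation the paper phrases via Lagrange multipliers), align the new $y^1$-axis with the unit normal there, verify the hypotheses of Lemma~\ref{AAA1000} via the implicit function theorem, and read off $\varphi(0')$, $D_{y'}\varphi(0')=0'$, and the H\"older seminorm from the explicit implicit-differentiation formula. The only notable (and harmless) difference is that you bound the local gradient by $\tan\theta_{x'}<1$ and invoke Lemma~\ref{AAA1000} with $\tau=1$, whereas the paper derives the slightly cruder component-wise bound $\|D_{y^k}\varphi_z\|\le 2$; both yield the stated estimate $\|D_{y'}\varphi\|_{L^\infty}\le 2\sqrt n$.
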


\begin{proof}
By the assumption in \eqref{BBB6200}, there exists 
\begin{equation}\label{BBB6500}
\text{a point } \bar{x} = (\psi(\bar{x}'),x') \in B_{R}
\text{ with }
|\bar{x}| = \min_{x' \in B_{R}'} |(\psi(x'), x')|. 
\end{equation}
To prove the lemma, we use the implicit function theorem.

\medskip

\noindent [Step 1: Choosing the new $y$-coordinate system] We define a vector 
\begin{equation}\label{BBB6530}\begin{aligned}
V_{1} = (V_{11}, \cdots, V_{1n}) 
& = - \frac{ \big( -1,  D_{x'} \psi(\bar{x}') \big) }{\big| \big( -1,D_{x'}\psi(\bar{x}') \big)\big|} \\ 
& = - \left( - \frac{1}{ \sqrt{ 1+ |D_{x'}\psi(\bar{x}')|^{2}}}, \frac{ D_{x'} \psi(\bar{x}') }{ \sqrt{ 1+ |D_{x'}\psi(\bar{x}')|^{2}}}\right) .
\end{aligned}\end{equation}
Also let $H$ be the hyperplane orthogonal to $V_{1}$ at the origin, and let $\{ V_{2} , \cdots, V_{n} \}$ be an orthonormal basis of the hyperplane $H$ satisfying $\det (V_{1}^{T}, \cdots, V_{n}^{T}) >0$. Then let $y$-coordinate system be the coordinate system with the orthonormal basis $\{ V_{1}, \cdots, V_{n} \}$.

\medskip

\noindent [Step 2: Transformation matrix between $y$-coordinate system and $x$-coordinate system]
Set the orthonormal matrix $O$ as
\begin{equation}\label{BBB6600}
O = \left( \begin{array}{ccc} 
V_{11} & \cdots & V_{n1} \\
\vdots & \vdots & \vdots \\
V_{1n} & \cdots & V_{nn}
\end{array}\right)
= \left( \begin{array}{c} V_{1} \\ \vdots \\ V_{n} \end{array} \right)^{T}
\qquad \Longrightarrow \qquad
\det O = \det V >0.
\end{equation}
Since $y$-coordinate system has the basis $\{ V_{1}, \cdots, V_{n} \}$, we have that
\begin{equation*}\label{}
x^{k} = \left \langle \sum_{j=1}^{n} y^{j} V_{j} , e_{k} \right \rangle 
= \sum_{j=1}^{n} y^{j} V_{jk} 
= \sum_{j=1}^{n} y^{j} O_{kj} 
= y \cdot O_{k}
\qquad (k \in [1,n]),
\end{equation*}
and so
\begin{equation}\label{BBB6800}
x = (y \cdot O_{1}, \cdots, y \cdot O_{n}) = yO^{T} 
\qquad \text{and} \qquad
y = (x \cdot V_{1}, \cdots, x \cdot V_{n}) = x O .
\end{equation}
Moreover, by \eqref{BBB6530},
\begin{equation*}
O_{1} \cdot e_{1} = V_{11} =  \frac{1}{ \sqrt{ 1+ |D_{x'}\psi(\bar{x}')|^{2}}} > 0,
\end{equation*}
and \eqref{BBB6350} follows.

\medskip

\noindent [Step 3: Local existence of the graph function] 
To apply Lemma \ref{AAA1000} with respect to $y$-coordinate system, we show that local existence of the graph function.

In view of \eqref{BBB6800}, the point $z$ in $x$-coordinate system is represented by the point $zO$ with respect to $y$-coordinate system. So to use the implicit function theorem, we compute the following. For a fixed $z \in S$,
\begin{equation}\begin{aligned}\label{BBB7200}
& D_{y^{1}} \left[ y\cdot   O_{1} - \psi(y \cdot O_{2}, \cdots, y \cdot  O_{n}) \right] \big|_{y=zO}\\
& \quad = O_{11} - \sum_{2 \leq k \leq n} O_{k1} D_{x^{k}}\psi(zO \cdot O_{2}, \cdots, zO \cdot O_{n}) \\
& \quad = O_{11} - \sum_{2 \leq k \leq n} O_{k1} D_{x^{k}}\psi(z').
\end{aligned}\end{equation}
We estimate the last term of \eqref{BBB7200}. In view of \eqref{BBB6530} and \eqref{BBB6600}, we find that
\begin{equation*}
(O_{11}, \cdots, O_{n1}) = V_{1} = - \frac{ \big( -1,  D_{x'} \psi(\bar{x}') \big) }{\big| \big( -1,D_{x'}\psi(\bar{x}') \big)\big|},
\end{equation*}
which implies that
\begin{equation*}
O_{11} - \sum_{2 \leq k \leq n} O_{k1} D_{x^{k}}\psi(z') 
 = \frac{ \big( -1,  D_{x'} \psi(\bar{x}') \big) \cdot \big( -1,  D_{x'} \psi(z') \big) }{\big| \big( -1,D_{x'}\psi(\bar{x}') \big)\big|}
\qquad (z' \in B_{8R}').
\end{equation*}
So we find that for any $z' \in B_{8R}'$,
\begin{equation*}\begin{aligned}
&O_{11} - \sum_{2 \leq k \leq n} O_{k1} D_{x^{k}}\psi(z') \\
& \quad =   \frac{ \big( -1,  D_{x'} \psi(\bar{x}') \big) \cdot \big[ \big( -1,  D_{x'} \psi(\bar{x}') \big) +   \big( -1,  D_{x'} \psi(z') \big) - \big( -1,  D_{x'} \psi(\bar{x}') \big)  \big]}{\big| \big( -1,D_{x'}\psi(\bar{x}') \big)\big|}.
\end{aligned}\end{equation*}
From the assumption that $n [D_{x'}\psi ]_{C^{\gamma}(B_{8R}')} (16R)^{\gamma} \leq 1/4$, for any $z' \in B_{8R}'$,
\begin{equation*}
\frac{ \big|  \big( -1,  D_{x'} \psi(\bar{x}') \big) \cdot \big[ \big( -1,  D_{x'} \psi(z') \big) - \big( -1,  D_{x'} \psi(\bar{x}') \big) \big] \big|}{\big| \big( -1,D_{x'}\psi(\bar{x}') \big)\big|} \leq |D_{x'}\psi (z') - D_{x'}\psi(\bar{x}')| 
 \leq \frac{1}{4}.
\end{equation*}
By combining the above two estimates, we obtain that
\begin{equation}\label{BBB7300}\begin{aligned}
O_{11} - \sum_{2 \leq k \leq n} O_{k1} D_{x^{k}}\psi(z') 
\geq \frac{ 3 \big| \big( -1,D_{x'}\psi(\bar{x}') \big)\big| }{4}
\qquad (z' \in B_{8R}').
\end{aligned}\end{equation}
It follows from \eqref{BBB7200} and \eqref{BBB7300} that
\begin{equation}\label{BBB7400}
D_{y^{1}} \left[ y\cdot   O_{1} - \psi(y \cdot O_{2}, \cdots, y \cdot  O_{n}) \right] \big|_{y=zO}
\geq \frac{1}{2}
\qquad (z \in S).
\end{equation}
Fix an arbitrary point $z \in S$. By the implicit function theorem with \eqref{BBB7400}, there exist ball $U_{z} \subset \mathbb{R}^{n-1}$ and function $\varphi_{z} : U_{z} \to \mathbb{R}$ such that
\begin{equation}\label{BBB7500}
(zO)' \in U_{z},
\qquad (zO)^{1} = \varphi_{z}\big( (zO)' \big),
\end{equation}
and
\begin{equation}\label{BBB7600}
(\varphi_{z}(y'),y') \cdot O_{1} - \psi \big(  (\varphi_{z}(y'),y') \cdot O_{2} , \cdots ,  (\varphi_{z}(y'),y') \cdot O_{n} \big) = 0
\qquad
(y' \in U_{z}).
\end{equation}
Also \eqref{BBB7500} yields that
$ \big( \varphi_{z}\big( (zO)' \big), (zO)' \big) = \big( (zO)^{1}, (zO)' \big) = zO \in B_{8R}$. So  with that $(zO)' \in U_{z}$, one can choose ball $U_{z}$ satisfying \eqref{BBB7500} and \eqref{BBB7600} so small that
\begin{equation}\label{BBB7800}
\big\{ \big( \varphi_{z}(y'),y' \big) : y' \in U_{z} \big\} \subset B_{8R}.
\end{equation} 
To apply Lemma \ref{AAA1000}, we need to check the assumptions \eqref{AAA1220}, \eqref{AAA1240}, \eqref{AAA1260} and \eqref{AAA1250}. By comparing with \eqref{BBB6200}, \eqref{BBB7300}, \eqref{BBB7500}, \eqref{BBB7600} and \eqref{BBB7800}, we only need to estimate $ \| D_{y'} \varphi_{z} \|_{L^{\infty}(U_{z})}$. 

From \eqref{BBB7600}, we have
\begin{equation*}\begin{aligned}
D_{y^{k}} \varphi_{z} (w')
& = - \frac{ D_{y^{k}}  \left[ y \cdot   O_{1} - \psi \big( y \cdot O_{2}, \cdots , y \cdot O_{n} \big) \right] \big|_{y = (\varphi_{z}(w'),w')} }
{ D_{y^{1}} \left[ y \cdot   O_{1} - \psi \big( y \cdot O_{2}, \cdots , y \cdot O_{n} \big) \right]  \big|_{y = (\varphi_{z}(w'),w')} } \\
& = - \frac{ O_{1k}  - \sum\limits_{2 \leq i \leq n} O_{ik}  D_{x^{i}}\psi \big( y \cdot O_{2}, \cdots , y \cdot O_{n} \big) \big|_{y = (\varphi_{z}(w'),w')} }{ O_{11} - \sum\limits_{2 \leq i \leq n} O_{i1} D_{x^{i}}\psi \big( y \cdot O_{2} , \cdots , y \cdot O_{n} \big) \big|_{y = (\varphi_{z}(w'),w')} } 
\end{aligned}\end{equation*}
for any $w' \in U_{z}$. Then from the fact that $(y \cdot O_{2}, \cdots, y \cdot O_{n}) = (y O^{T})'$, we have
\begin{equation}\label{BBB7900}
D_{y^{k}} \varphi_{z} (w') 
= - \frac{ O_{1k}  - \sum\limits_{2 \leq i \leq n} O_{ik} D_{x^{i}}\psi \Big( \big[ (\varphi_{z}(w'),w')O^{T} \big]' \Big) }{ O_{11} - \sum\limits_{2 \leq i \leq n} O_{i1} D_{x^{i}}\psi \Big( \big[ (\varphi_{z}(w'),w')O^{T} \big]' \Big) }
\qquad (w' \in U_{z}).
\end{equation}
To estimate \eqref{BBB7900}, we claim that 
\begin{equation}\begin{aligned}\label{BBB8300}
O_{11} - \sum_{2 \leq k \leq n} O_{i1}D_{x^{i}}\psi (x') 
\geq \frac{ \big\| \big( -1,D_{x'}\psi  \big)\big\|_{L^{\infty}(B_{8R}')}  }{2}
\qquad (x' \in  B_{8R}').
\end{aligned}\end{equation}
By the triangle inequality, 
\begin{equation*}\begin{aligned}
2|D_{x'}\psi(\bar{x}')|^{2} + 2| D_{x^{i}}\psi ( x')  - D_{x'}\psi(\bar{x}')|^{2} 
 \geq \left| D_{x^{i}}\psi (x') \right|^{2}
\qquad (x' \in B_{8R}').
\end{aligned}\end{equation*}
Then from the assumption that $n [ D_{x'}\psi]_{C^{\gamma}(B_{8R}')} (16R)^{\gamma} \leq 1/4$, we have that
\begin{equation*}
2 |D_{x'}\psi(\bar{x}')|^{2} 
\geq \left| D_{x'}\psi (x') \right|^{2}- 1
\qquad (x' \in B_{8R}'),
\end{equation*}
which implies that
\begin{equation*}
1+|D_{x'}\psi(\bar{x}')|^{2} 
\geq \frac{1 + \left| D_{x'}\psi (x') \right|^{2}}{2}
\qquad (x' \in B_{8R}'),
\end{equation*}
and so 
\begin{equation*}
\frac{ 3 \big| \big( -1,D_{x'}\psi(\bar{x}') \big)\big| }{4} 
\geq \frac{ 3 \big\| \big( -1,D_{x'}\psi  \big) \big\|_{L^{\infty}(B_{8R}')}  }{4 \sqrt{2} }
\geq \frac{ \big\| \big( -1,D_{x'}\psi  \big) \big\|_{L^{\infty}(B_{8R}')}  }{2}. 
\end{equation*}
Thus the claim \eqref{BBB8300} follows from \eqref{BBB7300}. On the other-hand, for any $k \in [2,n]$,
\begin{equation}\begin{aligned}\label{BBB8350}
& O_{1k}  - \sum\limits_{2 \leq i \leq n} O_{ik} D_{x^{i}}\psi \Big( \big[ (\varphi_{z}(w'),w')O^{T} \big]' \Big) \\
& \quad = - \big( O_{1k}, \cdots, O_{nk} \big) \cdot
\Big( -1, D_{x'}\psi\Big( \big[ (\varphi_{z}(w'),w')O^{T} \big]' \Big) \Big)
\qquad (w' \in U_{z}).
\end{aligned}\end{equation} 
In view of \eqref{BBB7800}, we have that $ \big[ (\varphi_{z}(w'),w')O^{T} \big]' \in B_{8R}'$ for any $w' \in U_{z}$. So by \eqref{BBB7900}, \eqref{BBB8300} and \eqref{BBB8350}, we have that
\begin{equation}\label{BBB8400}
\| D_{y^{k}}\varphi_{z} \|_{L^{\infty}(U_{z})}
\leq \frac{2 \Big| \Big( -1, D_{x'}\psi \big( \big[ (\varphi_{z}(w'),w')O^{T} \big]' \big) \Big) \Big| }{ \big\| \big( -1,D_{x'}\psi  \big) \big\|_{L^{\infty}(B_{8R}')} } 
\leq 2
\quad  (k \in [2,n]).
\end{equation}

\medskip

\noindent [Step 4: Existence of the graph function $\varphi$ in $B_{R}'$]
We apply Lemma \ref{AAA1000} by comparing \eqref{BBB6500} and \eqref{BBB7300} with \eqref{AAA1220}, \eqref{BBB7500} and \eqref{BBB8400} with \eqref{AAA1240}, \eqref{BBB7600} with \eqref{AAA1260} and \eqref{BBB7800} with \eqref{AAA1250}. Then there exists $C^{1}$-function $\varphi : B_{R}' \to \mathbb{R}$ such that
\begin{equation}\label{BBB8500}
\big( \varphi(y'),y' \big) \cdot O_{1} - \psi \big( \big( \varphi(y'),y' \big) \cdot O_{2}, \cdots , \big( \varphi(y'),y' \big) \cdot O_{n} \big) =0
\quad (y' \in B_{R}'),
\end{equation}
\begin{equation}\label{BBB8600}
(zO)^{1} = \varphi \big( (zO)' \big) 
\qquad (z \in S \cap B_{R}),
\end{equation}
\begin{equation}\label{BBB8700}
D_{y^{k}} \varphi(w')
= - \frac{ O_{1k}  - \sum\limits_{2 \leq i \leq n} O_{ik} D_{x^{i}}\psi \Big( \big[ (\varphi(w'),w')O^{T} \big]' \Big) }{ O_{11} - \sum\limits_{2 \leq i \leq n} O_{i1} D_{x^{i}}\psi \Big( \big[ (\varphi(w'),w')O^{T} \big]' \Big) } 
\quad (w' \in B_{R}'),
\end{equation}
\begin{equation}\label{BBB8750}
\| D_{y'}\varphi \|_{L^{\infty}(B_{R}')} 
\leq 2\sqrt{n},
\end{equation}
and
\begin{equation}\label{BBB8800}
\big\{ \big( \varphi(y'),y' \big) : y' \in B_{R}' \big\} \subset B_{8R}.
\end{equation} 
We find that \eqref{BBB6300}, \eqref{BBB6380} and \eqref{BBB6360} hold from \eqref{BBB8500}, \eqref{BBB8600} and \eqref{BBB8800} respectively. Moreover, \eqref{BBB8300} and \eqref{BBB8800} yields that
\begin{equation}\label{BBB8850}
O_{11} - \sum_{2 \leq k \leq n} O_{i1}D_{x^{i}}\psi \Big( \big[ (\varphi(w'),w')O^{T} \big]' \Big)
\geq \frac{ \big\| \big( -1,D_{x'}\psi  \big) \big\|_{L^{\infty}(B_{8R}')} }{2}
\end{equation}
for any $w' \in B_{R}'$. We will prove \eqref{BBB6370} and \eqref{BBB6400} in Step 5.

\medskip

\noindent [Step 5: Estimate of $|\varphi(0')|$, $D_{y'}\varphi(0')$ and $[D_{y'}\varphi]_{C^{\gamma}(B_{R}')}$]
From the minimality of $\bar{x}$ in \eqref{BBB6500}, we use Lagrange multiplier method  to find that $D[x^{1} - \psi(x')] = \big( 1,-D_{x'}\psi(\bar{x}') \big)$ and $D \big( |x|^{2} \big) = 2x$ are parallel at $\bar{x} = (\psi(\bar{x}'),x')$. Thus
\begin{equation}\label{BBB9000}
\bar{x}
= C_{*} |\bar{x}| \cdot \frac{ \big( 1, - D_{x'}\psi(\bar{x}') \big)}{\sqrt{ 1+ |D_{x'}\psi(\bar{x}')|^{2}}}
= C_{*} |\bar{x}| V_{1}
~\text{ for }~ C_{*} =1 \text{ or } C_{*}=-1.
\end{equation}
It follows from \eqref{BBB6600}, \eqref{BBB8600} and \eqref{BBB9000} that
\begin{equation}\label{BBB9100}
(\bar{x}O)' = (\bar{x} \cdot V_{2}, \cdots, \bar{x} \cdot V_{n}) = 0' 
\qquad \text{and} \qquad
(\bar{x}O)^{1} = \varphi \big( (\bar{x}O)' \big) = \varphi(0').
\end{equation}
So we find that
\begin{equation}\label{BBB9200}
|\varphi(0')| = |(\bar{x}O)^{1}| <  R.
\end{equation}
Also if $\psi(0')=0$ then by letting $\bar{x} = (0,0')$, we have that $\varphi(0')=0$. We find from \eqref{BBB9100} that
\begin{equation*}\begin{aligned}
D_{x'}\psi \Big( \big[ (\varphi(0'),0')O^{T} \big]' \Big)
= D_{x'} \psi \Big( [\bar{x} O O^{T}]' \Big)
= D_{x'} \psi (\bar{x}'),
\end{aligned}\end{equation*}
and \eqref{BBB6600}  implies that for any $k \in [2,n]$,
\begin{equation*}\begin{aligned}
O_{1k}  - \sum\limits_{2 \leq i \leq n} O_{ik} D_{x^{i}}\psi \Big( \big[ (\varphi(0'),0')O^{T} \big]' \Big) 
& = - V_{k}\cdot (-1 ,D_{x'} \psi (\bar{x}')) \\
& = (V_{k} \cdot V_{1}) |(-1 ,D_{x'} \psi (\bar{x}'))| = 0.
\end{aligned}\end{equation*}
So we have from  \eqref{BBB8700} that
\begin{equation}\label{BBB9400}
D_{y^{k}} \varphi(0')
= - \frac{ O_{1k}  - \sum\limits_{2 \leq i \leq n} O_{ik} D_{x^{i}}\psi \Big( \big[ (\varphi(0'),0')O^{T} \big]' \Big) }{ O_{11} - \sum\limits_{2 \leq i \leq n} O_{i1} D_{x^{i}}\psi \Big( \big[ (\varphi(0'),0')O^{T} \big]' \Big) } = 0
\quad (k \in [2,n]),
\end{equation}
and \eqref{BBB6370} follows from \eqref{BBB8750}, \eqref{BBB9200} and \eqref{BBB9400}.

Next, we estimate $[D_{y'}\varphi]_{C^{\gamma}(B_{R}')}$. 
For any $w',z' \in B_{R}'$, we have from \eqref{BBB8750} that
\begin{equation}\label{BBB9500}
|(\varphi(w'),w') - (\varphi(z'),z')|
\leq |(2\sqrt{n}|w'-z'|,w'-z')|
\leq 3\sqrt{n} |w'-z'|
\end{equation}
for any $w', z' \in B_{R}'$. Recall from  \eqref{BBB8700} that
\begin{equation*}
D_{y^{k}} \varphi (w')
= - \frac{  O_{1k}  - \sum\limits_{2 \leq i \leq n}  O_{ik}  D_{x^{i}}\psi \Big( \big[ (\varphi(w'),w')O^{T} \big]' \Big)  }{ O_{11} - \sum\limits_{2 \leq i \leq n} O_{i1} D_{x^{i}}\psi \Big( \big[ (\varphi(w'),w')O^{T} \big]' \Big)  } 
\qquad (w' \in B_{R}').
\end{equation*}
Then for any $w',z' \in B_{R}'$, we have
\begin{equation}\begin{aligned}\label{BBB9550}
|D_{y^{k}}\varphi(w') - D_{y^{k}}\varphi(z')| 
& \leq \bigg| \frac{I}{II}  - \frac{III}{IV} \bigg| \\
& \leq \bigg| \frac{I(IV-II) + II(I-III)}{II \cdot IV} \bigg| \\
& \leq \bigg| \frac{I}{II} \bigg| \bigg| \frac{IV-II}{IV} \bigg| + \bigg| \frac{I-III}{IV} \bigg|,
\end{aligned}\end{equation}
where
\begin{equation*}
I = O_{1k}  - \sum_{ 2 \leq i \leq n }  O_{ik}  D_{x^{i}}\psi \Big( \big[ (\varphi(w'),w')O^{T} \big]' \Big) ,
\end{equation*}
\begin{equation*}
II = O_{11} - \sum_{ 2 \leq i \leq n } O_{i1}   D_{x^{i}}\psi \Big( \big[ (\varphi(w'),w')O^{T} \big]' \Big) ,
\end{equation*}
\begin{equation*}
III = O_{1k}  - \sum_{ 2 \leq i \leq n } O_{ik}  D_{x^{i}}\psi \Big( \big[ (\varphi(z'),z')O^{T} \big]' \Big),
\end{equation*}
and
\begin{equation*}
IV = O_{11}  - \sum_{ 2 \leq i \leq n }  O_{i1}   D_{x^{i}}\psi \Big( \big[ (\varphi(z'),z')O^{T} \big]' \Big).
\end{equation*}
So one can check from \eqref{BBB8800}, \eqref{BBB8850} and  \eqref{BBB9500} that
\begin{equation*}\begin{aligned}
\bigg| \frac{IV-II}{IV} \bigg|
& \leq \frac{ 2 |(O_{2k},\cdots,O_{nk})| \big| D_{x'}\psi \big( \big[ (\varphi(w'),w')O^{T} \big]' \big) - D_{x'}\psi \big( \big[ (\varphi(z'),z')O^{T} \big]' \big) \big|}{ \| (-1,D_{x'} \psi ) \|_{L^{\infty}(B_{8R}')} } \\
& \leq \frac{ 2 [D_{x'}\psi]_{C^{\gamma}(B_{8R}')} \left| \big[ (\varphi(w'),w')O^{T} \big]' - (\varphi(z'),z')O^{T} \big]' \right|^{\gamma}}{ \| (-1,D_{x'} \psi ) \|_{L^{\infty}(B_{8R}')} } \\
& \leq \frac{6\sqrt{n} [D_{x'}\psi]_{C^{\gamma}(B_{8R}')} |w'-z'|^{\gamma} }{ \| (-1,D_{x'} \psi ) \|_{L^{\infty}(B_{8R}')} },
\end{aligned}\end{equation*}
and one can easily check from \eqref{BBB8850} that
\begin{equation*}\begin{aligned}
\bigg| \frac{I}{II} \bigg|
& \leq \frac{ 2 |(O_{1k}, \cdots, O_{nk})| \big \| (-1, D_{x'}\psi) \big \|_{L^{\infty}(B_{8R}')} }{ \big\| (-1, D_{x'}\psi) \big\|_{L^{\infty}(B_{8R}')} }
\leq 2.
\end{aligned}\end{equation*}
By combining the above two estimates,
\begin{equation}\begin{aligned}\label{BBB9600}
\bigg| \frac{I}{II} \bigg| \bigg| \frac{IV-II}{IV} \bigg|
\leq \frac{12\sqrt{n} [D_{x'}\psi]_{C^{\gamma}(B_{8R}')} |w'-z'|^{\gamma} }{ \big\| (-1, D_{x'}\psi) \big\|_{L^{\infty}(B_{8R}')} }.
\end{aligned}\end{equation}
Similarly,
\begin{equation}\begin{aligned}\label{BBB9700}
\bigg| \frac{I-III}{IV} \bigg|
\leq \frac{6\sqrt{n} [D_{x'}\psi]_{C^{\gamma}(B_{8R}')} |w'-z'|^{\gamma} }{ \| (-1,D_{x'} \psi ) \|_{L^{\infty}(B_{8R}')} }.
\end{aligned}\end{equation}
By combining  \eqref{BBB9550}, \eqref{BBB9600} and \eqref{BBB9700}, we have that
\begin{equation}\label{BBB9800}
|D_{y^{k}}\varphi(w') - D_{y^{k}}\varphi(z')| 
\leq \frac{ 18\sqrt{n} [D_{x'}\psi]_{C^{\gamma}(B_{8R}')} |w'-z'|^{\gamma} }{ \| (-1,D_{x'} \psi ) \|_{L^{\infty}(B_{8R}')} }
\quad (w',z' \in B_{R}').
\end{equation}
So we discover that the estimate  \eqref{BBB6400} holds from  \eqref{BBB9800}.
\end{proof}

To apply Lemma \ref{BBB6000} to a domain, we will use Lemma \ref{CCC1000} which can be obtained by Lemma \ref{PPP1000}.

\begin{lem}\label{PPP1000}
For $U \subset \mathbb{R}^{n}$, orthonormal matrices $V \in \mathbb{R}^{ n \times n}$ and $W \in \mathbb{R}^{n \times n}$ with $\det V>0$ and $\det W>0$, assume that $C^{1}$-functions $\psi : B_{8R}' \to \mathbb{R}$ and $\varphi : B_{R}' \to \mathbb{R}$ satisfy
\begin{equation}\label{PPP1300}
U \cap B_{R} = \left \{ \sum\limits_{1 \leq k \leq n} x^{k} V_{k} \in B_{R} : x^{1} > \psi(x') \right \},
\end{equation}
\begin{equation}\label{PPP1200}
\big( \varphi(y'),y' \big) \cdot W_{1} - \psi \big( \big( \varphi(y'),y' \big) \cdot W_{2}, \cdots , \big( \varphi(y'),y' \big) \cdot W_{n} \big) =0
\qquad (y' \in B_{R}'),
\end{equation}
and
\begin{equation*}
\big\{ \big( \varphi(y'),y' \big) : y' \in B_{R}' \big\} \subset B_{8R}.
\end{equation*}
For the orthonormal matrix
\begin{equation*}
O = W^{T} V 
\quad \text{with} \quad
\det O > 0,
\end{equation*}
the corresponding sets
\begin{equation*}
U^{+} = \left \{ \sum\limits_{1 \leq k \leq n} x^{k} V_{k} \in B_{R} : x^{1} > \psi(x') \right \},
~ 
V^{+} = \left \{ \sum\limits_{1 \leq k \leq n} y^{k} O_{k} \in B_{R} : y^{1} > \varphi(y') \right \},
\end{equation*}
\begin{equation*}
U^{-} = \left \{ \sum\limits_{1 \leq k \leq n} x^{k} V_{k} \in B_{R} : x^{1} < \psi(x') \right \},
~
V^{-} = \left \{ \sum\limits_{1 \leq k \leq n} y^{k} O_{k} \in B_{R} : y^{1} < \varphi(y') \right \},
\end{equation*}
and
\begin{equation*}
U^{0} = \left \{ \sum\limits_{1 \leq k \leq n} x^{k} V_{k} \in B_{R} : x^{1} = \psi(x') \right \},
~
V^{0} = \left \{ \sum\limits_{1 \leq k \leq n} y^{k} O_{k} \in B_{R} : y^{1} = \varphi(y') \right \},
\end{equation*}
further assume that
\begin{equation}\label{PPP3000}
U^{+} \cap V^{+} \not = \emptyset
\qquad \text{and} \qquad
U^{-} \cap V^{-} \not = \emptyset.
\end{equation}
Then $U^{+} = V^{+}$, $U^{0}=V^{0}$, $U^{-}=V^{-}$ and
\begin{equation}\label{PPP1800}
U \cap B_{R} = \left \{ \sum\limits_{1 \leq k \leq n} y^{k} O_{k} \in B_{R} : y^{1} > \varphi(y') \right \}.
\end{equation}
\end{lem}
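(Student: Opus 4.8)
The plan is to show that the two graph descriptions of $\partial U$ near the origin, one in the $x$-coordinate system (via $\psi$) and one in the $y$-coordinate system (via $\varphi$), actually coincide, and then to transfer the defining inequality $x^1>\psi(x')$ to $y^1>\varphi(y')$. The key geometric observation is that $V^0 = \{(\varphi(y'),y')W^T : y'\in B_R'\}$ (written back in the $x$-coordinate system after the substitution $x = (\varphi(y'),y')W^T$) consists of points $x\in B_R$ with $x^1 = \psi(x')$, by \eqref{PPP1200}; that is, $V^0\subset U^0$. Here one must be careful that the containment stays inside $B_R$, which is exactly what the assumption $\{(\varphi(y'),y'):y'\in B_R'\}\subset B_{8R}$ together with the definition of $V^0$ (as a subset of $B_R$) is there to control.

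First I would unwind the coordinate bookkeeping: a point with $y$-coordinates $(y^1,y')$ has $x$-coordinates $(y^1,y')W^T$, and the relabeling $O = W^TV$ is the transition matrix expressing the original $x$-coordinates relative to the $V$-basis in terms of the new $y$-coordinates. The sets $U^{\pm},U^0$ and $V^{\pm},V^0$ are all subsets of $B_R$; since $B_R$ is connected and $B_R = U^+\sqcup U^0\sqcup U^-$ as well as $B_R = V^+\sqcup V^0\sqcup V^-$, it suffices to prove $U^0 = V^0$, because then $U^+\cup U^-$ and $V^+\cup V^-$ are the same open set with the same two connected components (assuming $U^0$ separates $B_R$, which it does since it is the zero set of the $C^1$ graph condition with nonvanishing $y^1$-derivative — or more simply, each of $U^\pm$ is connected as the region strictly above/below a graph over $B_R'$ intersected with a ball, hence one of $U^+,U^-$ equals one of $V^+,V^-$), and the nonemptiness hypotheses \eqref{PPP3000} pin down the matching as $U^+=V^+$, $U^-=V^-$.

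The core step is therefore $U^0 = V^0$. For $V^0\subset U^0$: take $p\in V^0$, so in $y$-coordinates $p = (\varphi(y'),y')$ with the point lying in $B_R$; its $x$-coordinates (relative to the $V$-basis) are $(\varphi(y'),y')O^T = (\varphi(y'),y')V^TW$... — more precisely the first $V$-coordinate is $(\varphi(y'),y')\cdot W_1$ and the remaining ones are $(\varphi(y'),y')\cdot W_k$ for $k\geq 2$ — wait, one needs $O = W^TV$ so the $V$-coordinates of $p$ are obtained by the appropriate dot products with the rows of $W$; in any case \eqref{PPP1200} says precisely that the first $V$-coordinate equals $\psi$ of the remaining ones, i.e. $p\in U^0$. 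For the reverse inclusion $U^0\subset V^0$ one runs the same computation backwards, using that $\psi$ is defined on all of $B_{8R}'$ and that any $x\in B_R$ with $x^1 = \psi(x')$ lies in the image of the $\varphi$-graph — this is where one invokes that $\varphi$ is a graph over the full $B_R'$ (not a smaller ball), so every $y'\in B_R'$ is covered and the graph $\{(\varphi(y'),y')\}$ sweeps out all of $V^0$. Once $U^0 = V^0$ is established, \eqref{PPP1800} follows by noting $U\cap B_R = U^+\sqcup U^0 = V^+\sqcup V^0 = \{\sum y^k O_k\in B_R : y^1\geq \varphi(y')\}$ and checking the boundary points belong to the right side; combined with \eqref{PPP1300} this gives the claim.

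The main obstacle I anticipate is not any single hard estimate but rather the careful verification that the three-way partition of $B_R$ is genuinely a partition and that the connectedness argument correctly forces $U^+ = V^+$ rather than $U^+ = V^-$ — this is exactly what hypothesis \eqref{PPP3000} is designed to rule out, so the argument is: $U^+$ is connected, it meets $V^+$ by \eqref{PPP3000} and is disjoint from $V^0 = U^0$, hence $U^+\subset V^+\cup V^-$; being connected it lies in one of them; since it meets $V^+$ it lies in $V^+$; symmetrically $V^+\subset U^+$, giving equality, and likewise for the minus sets. A secondary fussy point is making sure all the coordinate identifications respect the ambient ball $B_R$ (a point is in $U^0$ iff its representation lands in $B_R$ \emph{and} satisfies the graph equation), which is handled by the hypothesis that the $\varphi$-graph stays in $B_{8R}$ so that $\psi$ may be legitimately evaluated.
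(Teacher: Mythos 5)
Your inclusion $V^0 \subset U^0$ is correct: the identity $p \cdot V_l = (pO^T)\cdot W_l$ for $O = W^T V$, combined with \eqref{PPP1200}, does exactly what you want, and the hypothesis that the $\varphi$-graph stays inside $B_{8R}$ keeps the argument of $\psi$ in its domain. The reverse inclusion $U^0 \subset V^0$, however, which you make the crux of the whole plan, is not actually proven. You assert that ``any $x \in B_R$ with $x^1 = \psi(x')$ lies in the image of the $\varphi$-graph,'' but the justification offered (``$\varphi$ is a graph over the full $B_R'$, so every $y'$ is covered'') only describes $V^0$; it does not show that a point of $U^0$ whose $y$-coordinates are $(y^1,y')$ must satisfy $y^1 = \varphi(y')$. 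Running the $V^0\subset U^0$ computation ``backwards'' would require that, for each fixed $y'\in B_R'$, the scalar equation
\begin{equation*}
(t,y')\cdot W_1 \;=\; \psi\bigl((t,y')\cdot W_2,\ \ldots,\ (t,y')\cdot W_n\bigr)
\end{equation*}
have a \emph{unique} solution $t$ with $(t,y')\,O\in B_R$, namely $t=\varphi(y')$. That uniqueness is not among the hypotheses of Lemma \ref{PPP1000} (no implicit-function-theorem nondegeneracy on $\psi$ is assumed there); without it the $\psi$-graph, viewed in $y$-coordinates, could a priori be multi-valued over a single $y'$, with $\varphi$ selecting only one branch, so $U^0 \subsetneq V^0$ cannot be ruled out by the coordinate computation alone.

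The paper's proof avoids this precisely by \emph{not} attempting $U^0\subset V^0$ directly. It first establishes $U^+\subset V^+$ and $U^-\subset V^-$ by a connectedness/intermediate-value argument inside $U^+$ (respectively $U^-$): if $U^+$ met both $V^-\cup V^0$ and $V^0\cup V^+$, a path in $U^+$ would hit $V^0$, and $V^0\subset U^0$ would then contradict $U^+\cap U^0=\emptyset$; the hypothesis \eqref{PPP3000} then forces the correct sign choice. Only after the $\pm$-inclusions are in hand is $U^0\subset V^0$ deduced: a point $z\in U^0\cap V^+$ would have an open ball $B_\rho(z)\subset V^+$, yet every such ball meets $U^-$ (since $z$ lies on the graph $x^1=\psi(x')$), giving $V^+\cap U^-\ne\emptyset$ in contradiction to $U^-\subset V^-$. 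Your outline can be salvaged by reordering in exactly this way — prove $V^0\subset U^0$ (as you did), then the $\pm$-inclusions via connectedness and \eqref{PPP3000}, then $U^0\subset V^0$ by the open-ball argument — but as written the ``run the computation backwards'' step is a genuine gap, and it is the one place where \eqref{PPP3000} and connectedness must be invoked rather than pure linear algebra.
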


\begin{proof}
\eqref{PPP1800} follows from $U^{+}=V^{+}$. So we will prove that $U^{+} = V^{+}$, $U^{0}=V^{0}$ and $U^{-}=V^{-}$. One can easily check that
\begin{equation}\label{PPP2900}
U^{+} \sqcup U^{0} \sqcup U^{-}
= B_{R}
= V^{+} \sqcup V^{0} \sqcup V^{-}.
\end{equation}

\medskip

\noindent [Step 1 : $U^{+} \subset V^{+}$ and $U^{-} \subset V^{-}$] We claim that 
\begin{equation}\label{PPP4100}
U^{+} \subset V^{+}
\qquad \text{and} \qquad
U^{-} \subset V^{-}.
\end{equation}
To prove it, suppose that
\begin{equation}\label{PPP4200}
U^{+} \cap (V^{-} \cup V^{0}) \not = \emptyset
\qquad \text{and} \qquad
U^{+} \cap (V^{0} \cup V^{+}) \not = \emptyset.
\end{equation}
Then there exist $z_{1} = \sum\limits_{1 \leq k \leq n} (z_{1} \cdot O_{k}) O_{k} \in U^{+}$ and $z_{2} = \sum\limits_{1 \leq k \leq n} (z_{2} \cdot O_{k}) O_{k} \in U^{+}_{} $ satisfying
\begin{equation*}
z_{1} \in U^{+} 
~ \text{ with } ~ 
z_{1} \cdot O_{1}
\leq \varphi \left( z_{1} \cdot O_{2}, \cdots, z_{2} \cdot O_{n}  \right),
\end{equation*}
and
\begin{equation*}
z_{2} \in U^{+} 
~ \text{ with } ~ 
z_{2} \cdot O_{1}
\geq \varphi \left( z_{2} \cdot O_{2}, \cdots, z_{2} \cdot O_{n}  \right).
\end{equation*}
Since $U^{+}$ is connected, one can choose a path $\mathit{l} \subset U^{+}$ connecting $z_{1}$ and $z_{2}$. So there exists $z \in \mathit{l} \subset U^{+}$ such that
\begin{equation*}
z \in l \subset U^{+} 
\qquad \text{and} \qquad
z \cdot O_{1}
= \varphi \left( z \cdot O_{2}, \cdots, z \cdot O_{n}  \right),
\end{equation*}
and by letting $\bar{z} = (\bar{z}^{1},\bar{z}') : = zO^{T} = (z \cdot O_{1}, \cdots, z \cdot O_{n})$, 
\begin{equation}\label{PPP4300}
z \in U^{+},
\qquad
(zO^{T})^{1} = \varphi \left( (zO^{T})' \right)
\qquad \text{and} \qquad
\bar{z}^{1} = \varphi(\bar{z}').
\end{equation}
By the fact that $\bar{z} = zO^{T} \in B_{R}$, we have that $\bar{z}' \in B_{R}'$. So we find from \eqref{PPP1200} that
\begin{equation*}
\left( \varphi \left( \bar{z}'  \right) , \bar{z}' \right) \cdot W_{1} 
= \psi \big( \left( \varphi \left( \bar{z}' \right) ,\bar{z}' \right) \cdot W_{2}, \cdots, \left( \varphi \left( \bar{z}'  \right)  , \bar{z}' \right) \cdot W_{n} \big),
\end{equation*}
and \eqref{PPP4300} implies that
\begin{equation}\label{PPP4350}
\bar{z} \cdot W_{1} = \psi ( \bar{z} \cdot W_{2}, \cdots, \bar{z} \cdot W_{n}).
\end{equation}
By a direct calculation,
\begin{equation*}\begin{aligned}
\bar{z} \cdot W_{l} 
= (z O^{T}) \cdot W_{l}
= \left( \sum_{1 \leq i \leq n} z^{i} O_{1i}, \cdots,  \sum_{1 \leq i \leq n}  z^{i} O_{ni} \right) \cdot W_{l}
= \sum_{1 \leq i,j \leq n} z^{i} O_{ji} W_{lj}.
\end{aligned}\end{equation*}
Since $O = W^{T}V $, we have that
$O_{ji} = \sum
\limits_{1 \leq k \leq n} W_{kj} V_{ki}$. Thus
\begin{equation*}\begin{aligned}
\bar{z} \cdot W_{l} 
= \sum_{1 \leq i,j \leq n} z^{i} O_{ji} W_{lj} 
= \sum_{1 \leq i,j,k \leq n} z^{i} W_{kj} V_{ki} W_{lj}
= \sum_{1 \leq i \leq n} z^{i} V_{li}= z \cdot V_{l}.
\end{aligned}\end{equation*}
So it follows from \eqref{PPP4350} that
\begin{equation*}
z \cdot V_{1} = \psi ( z \cdot V_{2}, \cdots, z \cdot V_{n}),
\end{equation*}
and by the fact that $z = \sum\limits_{1 \leq k \leq n} (z \cdot V_{k}) V_{k}$, $z \in U^{0}$ holds. Since $U^{+}$, $U^{-}$ and $U^{0}$ are mutually disjoint, this contradicts $z \in U^{+}$ in \eqref{PPP4300}. So \eqref{PPP4200} can not occur, and
\begin{equation}\label{PPP4400}
\text{one of} \quad
U^{+} \cap (V^{-} \cup V^{0}) = \emptyset
\quad \text{or} \quad
U^{+} \cap (V^{0} \cup V^{+}) = \emptyset
\quad \text{holds.}
\end{equation}
If $U^{+} \cap (V^{0} \cup V^{+}) = \emptyset$ then  \eqref{PPP2900} implies that $U^{+} \subset V^{-}$. So \eqref{PPP3000} yields that
\begin{equation*}
\emptyset \not = U^{+} \cap V^{+} \subset V^{-} \cap V^{+} = \emptyset,
\end{equation*}
which gives a contradiction. Thus by \eqref{PPP4400}, we obtain that
\begin{equation*}
U^{+} \cap (V^{-} \cup V^{0}) = \emptyset.
\end{equation*}
and it follows from \eqref{PPP2900} that
\begin{equation}\label{PPP4500}
U^{+} \subset V^{+}.
\end{equation}
By repeating the same argument for proving \eqref{PPP4500}, one can also prove that
\begin{equation}\label{PPP4600}
U^{-} \subset V^{-},
\end{equation}
and \eqref{PPP4100} follows from \eqref{PPP4500} and \eqref{PPP4600}.

\medskip

\noindent [Step 3 : $U^{0} \subset V^{0}$] We prove that
\begin{equation}\label{PPP5000}
U^{0} \subset V^{0}.
\end{equation}
To prove it, we claim that
\begin{equation}\label{PPP5100}
U^{0} \cap (V^{+} \cup V^{-}) = \emptyset.
\end{equation}
Suppose not. Then one of $U^{0} \cap V^{+} \not = \emptyset$ or $U^{0} \cap V^{-} \not = \emptyset$ holds. If $z \in U^{0} \cap V^{+}$ then by the fact that $V^{+}$ is open, there exists $\rho >0$ such that 
\begin{equation*}
B_{\rho}(z) \subset V^{+}.
\end{equation*}
By the definition of $U^{0}$, $B_{\rho}(z) \cap U^{+} \not = \emptyset$ and $B_{\rho}(z) \cap U^{-} \not = \emptyset$, which implies that
\begin{equation*}
V^{+} \cap U^{-} \supset B_{\rho}(z) \cap U^{-} \not = \emptyset
\qquad \text{and} \qquad
V^{+} \cap U^{+} \supset B_{\rho}(z) \cap U^{+} \not = \emptyset.
\end{equation*}
So by \eqref{PPP2900} and \eqref{PPP4100},
\begin{equation*}
\emptyset \not = V^{+} \cap U^{-} \subset V^{+} \cap V^{-} = \emptyset,
\end{equation*}
and a contradiction occurs. Similarly, also for the case that $z \in U^{0} \cap V^{-}$, one can obtain a contradiction. Thus we find that the claim \eqref{PPP5100} holds. By \eqref{PPP2900} and \eqref{PPP5100}, \eqref{PPP5000} follows.

\medskip

\noindent [Step 4 : $U^{+} = V^{+}$, $U^{0} = V^{0}$ and 
$U^{-} = V^{-}$] Recall from \eqref{PPP2900} that  $U^{+} \sqcup U^{0} \sqcup U^{-} = B_{R} = V^{+} \sqcup V^{0} \sqcup V^{-}$. So it follows from \eqref{PPP4100} and \eqref{PPP5000} that
\begin{equation*}
U^{+} = V^{+},
\qquad
U^{0} = V^{0}
\qquad \text{and} \qquad
U^{-} = V^{-},
\end{equation*}
which proves the lemma.
\end{proof}

 The proof of Lemma \ref{PPP1000} and Lemma \ref{PPP7000} are similar and we only state the lemma.

\begin{lem}\label{PPP7000}
For $U \subset \mathbb{R}^{n}$, orthonormal matrices $V \in \mathbb{R}^{ n \times n}$ and $W \in \mathbb{R}^{n \times n}$ with $\det V>0$ and $\det W>0$, assume that $C^{1}$-functions $\psi : B_{8R}' \to \mathbb{R}$ and $\varphi : B_{R}' \to \mathbb{R}$ satisfy
\begin{equation}\label{PPP7300}
U \cap B_{R} = \left \{ \sum\limits_{1 \leq k \leq n} y^{k} V_{k} \in B_{R} : x^{1} > \psi(x') \right \},
\end{equation}
\begin{equation}\label{PPP7200}
\big( \varphi(y'),y' \big) \cdot W_{1} - \psi \big( \big( \varphi(y'),y' \big) \cdot W_{2}, \cdots , \big( \varphi(y'),y' \big) \cdot W_{n} \big) =0
\qquad (y' \in B_{R}'),
\end{equation}
and
\begin{equation*}
\big\{ \big( \varphi(y'),y' \big) : y' \in B_{R}' \big\} \subset B_{8R}.
\end{equation*}
For the orthonormal matrix
\begin{equation*}
O = W^{T} V
\quad \text{with} \quad
\det O > 0,
\end{equation*}
the corresponding sets
\begin{equation*}
U^{+} = \left \{ \sum\limits_{1 \leq k \leq n} y^{k} V_{k} \in B_{R} : y^{1} > \psi(y') \right \},
~ 
V^{+} = \left \{ \sum\limits_{1 \leq k \leq n} x^{k} O_{k} \in B_{R} : x^{1} > \varphi(x') \right \},
\end{equation*}
\begin{equation*}
U^{-} = \left \{ \sum\limits_{1 \leq k \leq n} y^{k} V_{k} \in B_{R} : y^{1} < \psi(y') \right \},
~
V^{-} = \left \{ \sum\limits_{1 \leq k \leq n} x^{k} O_{k} \in B_{R} : x^{1} < \varphi(x') \right \},
\end{equation*}
and
\begin{equation*}
U^{0} = \left \{ \sum\limits_{1 \leq k \leq n} y^{k} V_{k} \in B_{R} : y^{1} = \psi(y') \right \},
~
V^{0} = \left \{ \sum\limits_{1 \leq k \leq n} x^{k} O_{k} \in B_{R} : x^{1} = \varphi(x') \right \},
\end{equation*}
further assume that
\begin{equation}\label{PPP7700}
U^{+} \cap V^{-} \not = \emptyset
\qquad \text{and} \qquad
U^{-} \cap V^{+} \not = \emptyset.
\end{equation}
Then $U^{+} = V^{-}$, $U^{0}=V^{0}$, $U^{-}=V^{+}$ and
\begin{equation}\label{PPP7800}
U \cap B_{R} = \left \{ \sum\limits_{1 \leq k \leq n} x^{k} O_{k} \in B_{R} : x^{1} < \varphi(x') \right \}.
\end{equation}
\end{lem}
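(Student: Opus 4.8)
The plan is to transcribe the proof of Lemma \ref{PPP1000} almost verbatim, the one change being that the graph of $\varphi$ now lies on the side of $\psi$ opposite to the one there; concretely, every occurrence of $V^{+}$ in that argument should be read as $V^{-}$ and vice versa, with the hypothesis $U^{+}\cap V^{-}\neq\emptyset$, $U^{-}\cap V^{+}\neq\emptyset$ of \eqref{PPP7700} playing the role of \eqref{PPP3000}.

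First I would record, as in \eqref{PPP2900}, the two partitions $U^{+}\sqcup U^{0}\sqcup U^{-}=B_{R}=V^{+}\sqcup V^{0}\sqcup V^{-}$; this holds because $\{(\varphi(y'),y'):y'\in B_{R}'\}\subset B_{8R}$ makes $\varphi$ a genuine graph over all of $B_{R}'$ in the $O$-coordinates. Next I would isolate the algebraic fact that drives everything, which is orientation-free: if $z=\sum_{k}(z\cdot O_{k})O_{k}\in B_{R}$ satisfies $z\cdot O_{1}=\varphi(z\cdot O_{2},\dots,z\cdot O_{n})$, then with $\bar z=zO^{T}$ one has $\bar z'\in B_{R}'$, $\bar z^{1}=\varphi(\bar z')$, and \eqref{PPP7200} gives $\bar z\cdot W_{1}=\psi(\bar z\cdot W_{2},\dots,\bar z\cdot W_{n})$; since $O=W^{T}V$ forces $O_{ji}=\sum_{k}W_{kj}V_{ki}$, the orthonormality of $W$ yields $\bar z\cdot W_{l}=z\cdot V_{l}$ for every $l$, hence $z\cdot V_{1}=\psi(z\cdot V_{2},\dots,z\cdot V_{n})$, i.e. $z\in U^{0}$ (the converse is the same computation read backwards). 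Thus $U^{0}$ and $V^{0}$ are the same subset of $B_{R}$, and no point of $U^{+}$ or of $U^{-}$ satisfies the graph equation of $\varphi$.

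Then I would derive the inclusions. Running the connectedness and intermediate-value argument that produced \eqref{PPP4100} (via \eqref{PPP4200}--\eqref{PPP4400}) on the connected set $U^{+}$, one of $U^{+}\cap(V^{-}\cup V^{0})=\emptyset$ or $U^{+}\cap(V^{0}\cup V^{+})=\emptyset$ must hold, since otherwise a path in $U^{+}$ joining a point with $z\cdot O_{1}\le\varphi(\cdot)$ to one with $z\cdot O_{1}\ge\varphi(\cdot)$ would produce a point of $U^{+}$ on the graph of $\varphi$, hence in $U^{0}$, a contradiction. Because $U^{+}\cap V^{-}\neq\emptyset$ by \eqref{PPP7700}, the first alternative is impossible, so $U^{+}\subset V^{-}$; the same reasoning applied to $U^{-}$, together with $U^{-}\cap V^{+}\neq\emptyset$, gives $U^{-}\subset V^{+}$. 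For $U^{0}\subset V^{0}$ I would repeat the argument for \eqref{PPP5000}: if $z\in U^{0}\cap V^{+}$ then a ball $B_{\rho}(z)\subset V^{+}$ meets both $U^{+}$ and $U^{-}$, so $V^{+}\cap U^{+}\neq\emptyset$, contradicting $U^{+}\subset V^{-}$ together with $V^{+}\cap V^{-}=\emptyset$; the case $z\in U^{0}\cap V^{-}$ is excluded identically using $U^{-}\subset V^{+}$. Hence $U^{0}\cap(V^{+}\cup V^{-})=\emptyset$, so $U^{0}\subset V^{0}$. Finally, since both triples partition $B_{R}$ and $U^{+}\subset V^{-}$, $U^{-}\subset V^{+}$, $U^{0}\subset V^{0}$, all three inclusions are equalities, giving $U^{+}=V^{-}$, $U^{0}=V^{0}$, $U^{-}=V^{+}$, and \eqref{PPP7800} then follows at once from $U\cap B_{R}=U^{+}=V^{-}$.

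I do not expect a real obstacle; the proof is a transcription of that of Lemma \ref{PPP1000}. The only point needing attention is bookkeeping the orientation swap: one must invoke \eqref{PPP7700} in the right place to select $U^{+}\subset V^{-}$ rather than $V^{+}$, and then use the reversed inclusions $U^{\pm}\subset V^{\mp}$, in place of $U^{\pm}\subset V^{\pm}$, inside the open-ball argument for $U^{0}\subset V^{0}$. The single computation, $\bar z\cdot W_{l}=z\cdot V_{l}$ from $O=W^{T}V$, is word for word the one already carried out in the proof of Lemma \ref{PPP1000}.
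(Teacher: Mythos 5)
Your proposal is exactly the intended argument: the paper omits the proof of Lemma \ref{PPP7000}, stating only that it is ``similar'' to that of Lemma \ref{PPP1000}, and what you wrote is that similar proof with the correct orientation swap ($V^{\pm}$ read as $V^{\mp}$) and with \eqref{PPP7700} selecting the right alternative. One small inaccuracy: the parenthetical ``(the converse is the same computation read backwards)'' is not quite right, since \eqref{PPP7200} only says the graph of $\varphi$ satisfies the equation, not that it is the unique solution, so $U^{0}\subset V^{0}$ does not follow from the algebra alone; but this does not affect your argument, because you only use the direction $V^{0}\subset U^{0}$ in the intermediate-value step and then establish $U^{0}\subset V^{0}$ separately by the open-ball argument, exactly as the paper does in its proof of Lemma \ref{PPP1000}.
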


\medskip

In view of Lemma \ref{BBB6000} and Lemma \ref{PPP1000}, we obtain the following lemma.

\begin{lem}\label{CCC1000}
For $U \subset \mathbb{R}^{n}$ and $C^{1,\gamma}$-function $\psi : B_{8R}' \to \mathbb{R}$ with
\begin{equation}\label{CCC1100}
S = \{ ( \psi(x'),x') \in B_{8R} : x \in B_{8R}' \}.
\end{equation}
Assume that
\begin{equation}\label{CCC1200}
S \cap B_{R} \not = \emptyset,
\qquad
n [ D_{x'}\psi]_{C^{\gamma}(B_{8R}')} (16R)^{\gamma} \leq 1/4,
\end{equation}
and
\begin{equation}\label{CCC1300}
U \cap B_{R} = \{ (x^{1},x') \in B_{R} : x^{1} > \psi(x') \}.
\end{equation}
Then there exist an orthonormal matrix $ V \in \mathbb{R}^{n \times n}$ with $\det V >0$ and $C^{1,\gamma}$-function $\varphi : B_{R}' \to \mathbb{R}$ such that
\begin{equation}\label{CCC1500}
U \cap B_{R} = \left \{ \sum\limits_{1 \leq k \leq n} y^{k} V_{k} \in B_{R} : y^{1} > \varphi(y') \right\},
\end{equation}
and
\begin{equation}\label{CCC1550}
z \cdot V_{1} = \varphi \big( z \cdot V_{2} , \cdots, z \cdot V_{n} \big) 
\qquad (z \in S \cap B_{R}),
\end{equation}
with the estimate
\begin{equation}\label{CCC1600}
|\varphi(0')| < R,
\qquad
D_{y'}\varphi(0')=0',
\qquad
\| D_{y'}\varphi \|_{L^{\infty}(B_{R}')} 
\leq 2\sqrt{n},
\end{equation}
and
\begin{equation}\label{CCC1700}
[D_{y'}\varphi]_{C^{\gamma}(B_{R}')} 
\leq \frac{ 18n  [D_{y'}\psi]_{C^{\gamma}(B_{8R}')} }{ \| (-1,D_{y'} \psi ) \|_{L^{\infty}(B_{8R}')} }.
\end{equation}
Moreover, if $\psi(0')=0$ then $\varphi(0')=0$. 
\end{lem}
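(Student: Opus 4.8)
The plan is to first produce the coordinate system with Lemma~\ref{BBB6000} and then upgrade the graph identity it yields into the set identity \eqref{CCC1500} by means of Lemma~\ref{PPP1000}. Concretely, I would begin by applying Lemma~\ref{BBB6000}: its hypotheses \eqref{BBB6100} and \eqref{BBB6200} are literally \eqref{CCC1100} and \eqref{CCC1200}, so it furnishes an orthonormal matrix $O\in\mathbb{R}^{n\times n}$ with $\det O>0$ and a $C^{1,\gamma}$-function $\varphi:B_R'\to\mathbb{R}$ satisfying \eqref{BBB6300}, \eqref{BBB6380}, \eqref{BBB6350}, \eqref{BBB6360} together with the estimates \eqref{BBB6370} and \eqref{BBB6400}. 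I would then set $V:=O^{T}$, so that $\det V=\det O>0$, the rows $V_1,\dots,V_n$ of $V$ are the columns of $O$, and $z\cdot V_k=(zO)^{k}$ for every $z\in\mathbb{R}^{n}$ and $k\in[1,n]$. With this choice, \eqref{CCC1600} and \eqref{CCC1700} are just \eqref{BBB6370} and \eqref{BBB6400}, \eqref{CCC1550} is \eqref{BBB6380} rewritten through $z\cdot V_k=(zO)^{k}$, and the last assertion of the lemma follows by applying \eqref{BBB6380} to $z=\mathbf{0}\in S\cap B_R$ in the case $\psi(0')=0$. So the only remaining point is \eqref{CCC1500}.

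For \eqref{CCC1500} I would invoke Lemma~\ref{PPP1000} with the substitutions $[U]=U$, $[\psi]=\psi$, $[\varphi]=\varphi$, $[V]=I$ (the identity matrix) and $[W]=O$; then $[O]=[W]^{T}[V]=O^{T}=V$, so the conclusion \eqref{PPP1800} is exactly \eqref{CCC1500}. Among the hypotheses of Lemma~\ref{PPP1000}, \eqref{PPP1300} with $[V]=I$ is \eqref{CCC1300}, \eqref{PPP1200} is \eqref{BBB6300}, and the containment of the graph of $\varphi$ in $B_{8R}$ is \eqref{BBB6360}; the one genuinely nonformal hypothesis is \eqref{PPP3000}, i.e.\ $U^{+}\cap V^{+}\neq\emptyset$ and $U^{-}\cap V^{-}\neq\emptyset$, where here $U^{\pm}=\{x\in B_R:\pm(x^{1}-\psi(x'))>0\}$ and $V^{\pm}=\{q\in B_R:\pm((qO)^{1}-\varphi((qO)'))>0\}$. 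Checking \eqref{PPP3000} is the real content of the proof, and is the step I expect to be the main obstacle.

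To verify \eqref{PPP3000} I would fix any $z_{0}=(\psi(z_{0}'),z_{0}')\in S\cap B_R$ (such a point exists by \eqref{CCC1200}) and let $\nu=(O_{11},\dots,O_{n1})$ be the first column of $O$, a unit vector with $\nu\cdot e_1=O_{11}>0$ by \eqref{BBB6350}. For $|t|$ small set $p_{t}:=z_{0}+t\nu\in B_R$. On the one hand $p_{t}O=z_{0}O+t(\nu O)=z_{0}O+te_{1}$ since $\nu O=(O^{T}O)_{1}=e_{1}$, and $(z_{0}O)^{1}=\varphi((z_{0}O)')$ by \eqref{BBB6380}, so $(p_{t}O)^{1}-\varphi((p_{t}O)')=t$; hence $p_{t}\in V^{+}$ for small $t>0$ and $p_{t}\in V^{-}$ for small $t<0$. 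On the other hand, writing $\nu'=(O_{21},\dots,O_{n1})$ and integrating along the segment,
\begin{equation*}
p_{t}^{1}-\psi(p_{t}')=tO_{11}-\big(\psi(z_{0}'+t\nu')-\psi(z_{0}')\big)=t\int_{0}^{1}\Big(O_{11}-\sum_{2\leq k\leq n}O_{k1}D_{x^{k}}\psi(z_{0}'+st\nu')\Big)\,ds,
\end{equation*}
and for $|t|$ small the points $z_{0}'+st\nu'$, $s\in[0,1]$, stay in $B_{8R}'$, on which the integrand is strictly positive by the transversality lower bound $O_{11}-\sum_{2\leq k\leq n}O_{k1}D_{x^{k}}\psi>0$ established in the proof of Lemma~\ref{BBB6000} (see \eqref{BBB7300} and \eqref{BBB8300}). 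Thus $p_{t}^{1}-\psi(p_{t}')$ has the sign of $t$, so $p_{t}\in U^{+}$ for small $t>0$ and $p_{t}\in U^{-}$ for small $t<0$. Taking $t>0$ small then gives $p_{t}\in U^{+}\cap V^{+}$ and $t<0$ small gives $p_{t}\in U^{-}\cap V^{-}$, which is \eqref{PPP3000}; Lemma~\ref{PPP1000} then yields \eqref{CCC1500}, completing the argument. The only delicate point is matching the two sides of the hypersurface $\{x^{1}=\psi(x')\}$ with the two sides of $\{y^{1}=\varphi(y')\}$ in the correct order, and this is precisely where the quantitative transversality coming out of Lemma~\ref{BBB6000} is used; everything else is transcription through the two cited lemmas.
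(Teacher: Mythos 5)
Your proposal is correct, and the overall architecture is the same as the paper's: apply Lemma~\ref{BBB6000}, set $V=O^{T}$, and upgrade the graph identity to the set identity \eqref{CCC1500} via Lemma~\ref{PPP1000}, with the one nontrivial hypothesis being \eqref{PPP3000}. The one place you diverge is exactly the step you flag as the real content, the verification of \eqref{PPP3000}. The paper picks the specific graph point $\bar z=(\varphi(0'),0')V$ sitting over $y'=0'$ and perturbs along $e_{1}$: the inclusion $\epsilon e_{1}+\bar z\in U^{+}$ is then immediate, and the inclusion in $V^{+}$ is obtained from the mean value theorem together with $D_{y'}\varphi(0')=0'$ and $V_{1}\cdot e_{1}>0$, i.e.\ a Taylor-expansion argument at the special base point. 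You instead start from an arbitrary $z_{0}\in S\cap B_{R}$ and perturb along $\nu=V_{1}$ (the new normal direction): the inclusion $p_{t}\in V^{+}$ for $t>0$ is then exact and linear, $(p_{t}O)^{1}-\varphi((p_{t}O)')=t$, and the inclusion $p_{t}\in U^{+}$ comes from the quantitative transversality lower bound \eqref{BBB7300}. Both are valid; yours is a little cleaner in that it is independent of the vanishing of $D_{y'}\varphi$ at $0'$ and makes the sign on the $V$-side exact rather than asymptotic, at the (mild) cost of explicitly invoking the lower bound from inside the proof of Lemma~\ref{BBB6000} rather than only its stated conclusions. Everything else in your argument (choice of $V=O^{T}$, translation of \eqref{BBB6380} into \eqref{CCC1550}, reading off \eqref{CCC1600}--\eqref{CCC1700} from \eqref{BBB6370}--\eqref{BBB6400}, and the case $\psi(0')=0$) matches the paper.
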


\begin{rem}\label{CCC2000}
In Lemma \ref{CCC1000}, $U \cap B_{R} = \{ (x^{1},x') \in B_{R} : x^{1} > \psi(x') \}$ in $x$-coordinate system is represented by the set $\{ (y^{1},y') \in B_{R} : y^{1} > \varphi(y')\}$ with respect to $y$-coordinate system which has the basis $\{ V_{1}, \cdots, V_{n} \}$.
\end{rem}

\begin{proof}

By Lemma \ref{BBB6000}, there exists an orthonormal matrix $ O \in \mathbb{R}^{n \times n}$ with $\det O >0$ and $C^{1,\gamma}$-function $\varphi : B_{R}' \to \mathbb{R}$  such that
\begin{equation}\label{CCC2400}
\big( \varphi(y'),y' \big) \cdot O_{1} - \psi \big( \big( \varphi(y'),y' \big) \cdot O_{2}, \cdots , \big( \varphi(y'),y' \big) \cdot O_{n} \big) =0
\qquad (y' \in B_{R}'),
\end{equation}
\begin{equation}\label{CCC2500}
(zO)^{1} = \varphi \big( (zO)' \big) 
\qquad (z \in S \cap B_{R}),
\end{equation}
\begin{equation}\label{CCC2600}
O_{1} \cdot e_{1} > 0
\end{equation}
and
\begin{equation}\label{CCC2650}
\big\{ \big( \varphi(y'),y' \big) : y' \in B_{R}' \big\} \subset B_{8R},
\end{equation}
with the estimate
\begin{equation}\label{CCC2700}
|\varphi(0')| <  R,
\qquad
D_{y'}\varphi(0')=0',
\qquad
\| D_{y'}\varphi \|_{L^{\infty}(B_{R}')} 
\leq 2\sqrt{n},
\end{equation}
and
\begin{equation}\label{CCC2800}
[D_{y'}\varphi]_{C^{\gamma}(B_{R}')} 
\leq \frac{ 18n [D_{x'}\psi]_{C^{\gamma}(B_{8R}')} }{ \| (-1,D_{x'} \psi ) \|_{L^{\infty}(B_{8R}')} }.
\end{equation}
Also if $\psi(0')=0$ then $\varphi(0')=0$. So $\varphi$ satisfies \eqref{CCC1550}, \eqref{CCC1600} and \eqref{CCC1700} for
\begin{equation*}
V = O^{T}
\qquad \Longrightarrow \qquad
\det V > 0,
\end{equation*}
and it only remains to prove \eqref{CCC1500}. By a direct calculation,
\begin{equation}\label{CCC3200}
V_{1} \cdot e_{1} = O_{11} = O_{1} \cdot e_{1} >0.
\end{equation}
Let
\begin{equation*}
U^{+} = \left \{ (x^{1},x') \in B_{R} : x^{1} > \psi(x') \right \},
\quad 
V^{+} = \left \{ \sum\limits_{1 \leq k \leq n} y^{k} V_{k} \in B_{R} : y^{1} > \varphi(y') \right\},
\end{equation*}
\begin{equation*}
U^{0} = \left \{ (x^{1},x') \in B_{R} : x^{1} = \psi(x') \right \},
\quad 
V^{0} = \left \{ \sum\limits_{1 \leq k \leq n} y^{k} V_{k} \in B_{R} : y^{1} = \varphi(y') \right\},
\end{equation*}
\begin{equation*}
U^{-} = \left \{ (x^{1},x') \in B_{R} : x^{1} < \psi(x') \right \},
\quad 
V^{-} = \left \{ \sum\limits_{1 \leq k \leq n} y^{k} V_{k} \in B_{R} : y^{1} < \varphi(y') \right\}.
\end{equation*}
To apply Lemma \ref{PPP1000}, we need to check that
\begin{equation}\label{CCC3500}
U^{+} \cap V^{+} \not = \emptyset
\qquad \text{and} \qquad
U^{-} \cap V^{-} \not = \emptyset.
\end{equation}

We will only prove that $U^{+} \cap V^{+} \not = \emptyset$, because $U^{-} \cap V^{-} \not = \emptyset$ can be proved similarly. Let $\bar{z} = (\varphi(0'),0') V \in B_{R}$. From \eqref{CCC2400} and the fact that 
\begin{equation*}
\bar{z} = (\varphi(0'),0') V 
= (\varphi(0'),0') O^{T} 
= \big( (\varphi(0'),0') \cdot O_{1}, \cdots, (\varphi(0'),0') \cdot O_{n} \big),
\end{equation*}
we have that
\begin{equation}\label{CCC3800}
\bar{z}^{1} = \psi(\bar{z}').
\end{equation}
Since $\bar{z} \in B_{R}$, there exists $\epsilon_{0} \in (0,1]$ such that
\begin{equation*}
\epsilon \in (0,\epsilon_{0}] 
\qquad \Longrightarrow \qquad
\epsilon e_{1} + \bar{z} \in B_{R},
\end{equation*}
and we have from \eqref{CCC3800} that
\begin{equation}\label{CCC4000}
\epsilon e_{1} + \bar{z} \in U^{+}
\quad \text{for} \quad
\epsilon \in (0,\epsilon_{0}].
\end{equation}
So it only need to prove that there exists $\epsilon \in (0,\epsilon_{0}]$ with
\begin{equation*}
\epsilon e_{1} + \bar{z} \in V^{+}.
\end{equation*}
Since $\epsilon e_{1} + \bar{z} = \sum\limits_{1 \leq k \leq n} [(\epsilon e_{1} + \bar{z}) \cdot V_{k}] V_{k}$,
we claim that
\begin{equation}\label{CCC4200}
(\epsilon e_{1} + \bar{z})\cdot V_{1} 
> \varphi \big( (\epsilon e_{1} + \bar{z}) \cdot V_{2}, \cdots, (\epsilon e_{1} + \bar{z}) \cdot V_{n} \big).
\end{equation}
Since $\bar{z} \cdot V_{1} = (\varphi(0'),0') V V_{1}^{T} = \varphi(0')$ and $\bar{z} \cdot V_{k} = (\varphi(0'),0') V V_{k}^{T} = 0$ for $2 \leq k \leq 2$, it is suffice to prove that
\begin{equation}\label{CCC4300}
\epsilon e_{1} \cdot V_{1} 
> \varphi \big( (\epsilon e_{1} + \bar{z}) \cdot V_{2}, \cdots, (\epsilon e_{1} + \bar{z}) \cdot V_{n} \big) - \bar{z} \cdot V_{1}
= \varphi \big( (\epsilon e_{1} V^{T})' \big) - \varphi(0').
\end{equation}
By mean value theorem,
\begin{equation}\label{CCC4400}\begin{aligned}
\varphi \big( (\epsilon e_{1} V^{T})' \big) - \varphi(0') = D_{y'} \varphi \big( (\bar{\epsilon} e_{1} V^{T})' \big) \cdot (\epsilon e_{1} V^{T})'
\text{ for some } \bar{\epsilon} \in (0,\epsilon].
\end{aligned}\end{equation}
From \eqref{CCC3200}, $V_{1} \cdot e_{1} > 0$. Also by \eqref{CCC2700} and the fact that $\varphi \in C^{1,\gamma}(B_{R}')$,
\begin{equation*}
D_{y'} \varphi \big( (\bar{\epsilon} e_{1} V^{T})' \big) \cdot (e_{1} V^{T})'
\to 0
\quad \text{as} \quad
\epsilon \to 0,
\end{equation*}
and the claim \eqref{CCC4200} holds from \eqref{CCC4300} and \eqref{CCC4400} for some $\epsilon \in (0,\epsilon_{0}]$. Thus $\epsilon e_{1} + \bar{z} \in V^{+}$ and \eqref{CCC4000} implies that $U^{+} \cap V^{+} \not = \emptyset$. Similarly, one can prove that $U^{-} \cap V^{-} \not = \emptyset$, and obtain \eqref{CCC3500}. The lemma follows by applying Lemma \ref{PPP1000} to \eqref{CCC1300}, \eqref{CCC2400} and \eqref{CCC2650} for $V$, $O$ and $I_{n}$ instead of $O$, $W$ and $V$ respectively.
\end{proof}

In Lemma \ref{EEE1000}, we prove that for $C^{1,\gamma}$-domain $U$, if the normal on $\partial U \cap B_{8R}$ is almost opposite to $e_{1}$, then $U$ is also a graph in $B_{R}$ with respect to $x^{1}$-variable. To obtain Lemma \ref{EEE1000}, we derive Lemma \ref{DDD1000}.

\begin{lem}\label{DDD1000}
Let $V \in \mathbb{R}^{n \times n}$ be an orthonormal matrix with $\det V > 0$ satisfying
\begin{equation}\label{DDD1100}
| V_{1} + e_{1} | \leq \tau
\quad \text{ for some } \tau \in (0,1/(8n)].
\end{equation}
Assume that $\psi : B_{8R}' \to \mathbb{R}$ is  $C^{1}$-function with
\begin{equation}\label{DDD1200}
|\psi(0')| <R,
\qquad
D_{ y'  }\psi(0')=0
\qquad \text{and} \qquad
\| D_{ y'  }\psi \|_{L^{\infty}(B_{8R}')} \leq \tau.
\end{equation} 
Then there exists $C^{1}$-function $\varphi : B_{R}' \to \mathbb{R}$ satisfying
\begin{equation}\label{DDD1500}
\big( \varphi(x' ),x'  \big) \cdot V_{1} - \psi \big( \big( \varphi(x' ),x'  \big) \cdot V_{2}, \cdots , \big( \varphi(x' ),x'  \big) \cdot V_{n} \big) =0
\qquad (x'  \in B_{R}'),
\end{equation}
\begin{equation}\begin{aligned}\label{DDD1600}
\| D_{x' } \varphi \|_{L^{\infty}(B_{R}')} 
\leq 8 \tau  \sqrt{n} ,
\qquad
(zV)^{1} = \varphi \big( (zV)' \big)
\quad 
(z \in S \cap B_{R}),
\end{aligned}\end{equation}
and
\begin{equation}\label{DDD1700}
\big\{ \big( \varphi(x' ),x'  \big) : x'  \in B_{R}' \big\} \subset B_{8R}.
\end{equation}
Moreover, if $\psi \in C^{1,\gamma}(B_{8R}')$ then $\varphi \in C^{1,\gamma}(B_{R}')$ with the estimate
\begin{equation}\label{DDD1800}
[D_{x' }\varphi]_{C^{\gamma}(B_{R}')} \leq 16 [D_{ y'  }\psi]_{C^{\gamma}(B_{8R}')}.
\end{equation}
\end{lem}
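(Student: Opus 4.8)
The plan is to mimic the proof of Lemma~\ref{BBB6000} and Lemma~\ref{CCC1000}: produce the graph function $\varphi$ locally by the implicit function theorem and then patch the local pieces into a single function on $B_R'$ by invoking Lemma~\ref{AAA1000} with $O=V$. Put $S=\{(\psi(y'),y')\in B_{8R}:y'\in B_{8R}'\}$ and, for $x=(x^1,x')$ with $(x\cdot V_2,\dots,x\cdot V_n)\in B_{8R}'$, set $F(x)=x\cdot V_1-\psi(x\cdot V_2,\dots,x\cdot V_n)$. A direct computation gives, for every $k\in[1,n]$,
\[
D_{x^k}F(x)=c_k\cdot\bigl(1,-D_{y'}\psi(x\cdot V_2,\dots,x\cdot V_n)\bigr),
\]
where $c_k=(V_{1k},\dots,V_{nk})$ is the $k$-th column of $V$, so that $\{c_1,\dots,c_n\}$ is orthonormal. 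Since $|\psi(0')|<R$, the point $(\psi(0'),0')$ lies in $S\cap B_R$, which is the nonemptiness needed in \eqref{AAA1220}.

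From $|V_1+e_1|\le\tau$ one reads off $V_{11}\in[-1,-1+\tau]$ and $|V_{1k}|\le\tau$ for $k\ge2$, and orthonormality of the columns then gives $\sum_{k\ge2}V_{k1}^2=1-V_{11}^2\le 2\tau$. Together with $\|D_{y'}\psi\|_{L^\infty(B_{8R}')}\le\tau$ and $\tau\le1/(8n)$ this yields, on all of $B_{8R}'$,
\[
D_{x^1}F=c_1\cdot(1,-D_{y'}\psi)\le V_{11}+\sqrt{2\tau}\,\tau\le -1+\tau+\sqrt{2\tau}\,\tau<-\tfrac12,
\]
so $|D_{x^1}F|\ge\tfrac12$ throughout; this is precisely the non-degeneracy $O_{11}-\sum_{j\ge2}O_{j1}D_{x^j}\psi\ne0$ required in \eqref{AAA1220}. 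Hence for each $z\in S$ the implicit function theorem supplies a ball $U_z\subset\mathbb{R}^{n-1}$ and $\varphi_z\in C^1(U_z)$ with $(zV)'\in U_z$, $(zV)^1=\varphi_z((zV)')$ and $F\bigl((\varphi_z(y'),y')\bigr)=0$ on $U_z$ (this is \eqref{AAA1260}), and since $zV\in B_{8R}$ lies on this local graph we may, exactly as in the passage leading to \eqref{BBB7800}, shrink $U_z$ so that $\{(\varphi_z(y'),y'):y'\in U_z\}\subset B_{8R}$. For the gradient I would use the implicit function formula $D_{x^k}\varphi_z=-(c_k\cdot w)/(c_1\cdot w)$ with $w=(1,-D_{y'}\psi(\cdots))$; since $|c_k\cdot w|\le|V_{1k}|+|D_{y'}\psi|\le 2\tau$ for $k\ge2$ while $|c_1\cdot w|=|D_{x^1}F|\ge\tfrac12$, this gives $\|D_{y'}\varphi_z\|_{L^\infty(U_z)}\le 4\sqrt n\,\tau\le 8\sqrt n\,\tau$.

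Next I would apply Lemma~\ref{AAA1000} with $O=V$ and with its parameter $\tau$ replaced by $8\sqrt n\,\tau$, which lies in $(0,1]$ since $\tau\le1/(8n)$. The hypotheses \eqref{AAA1220}, \eqref{AAA1240}, \eqref{AAA1260}, \eqref{AAA1250} are exactly the facts just checked, so there is $\varphi\in C^1(B_R')$ satisfying \eqref{DDD1500} (from \eqref{AAA1270}), the identity $(zV)^1=\varphi((zV)')$ for $z\in S\cap B_R$ (from \eqref{AAA1275}), $\|D_{x'}\varphi\|_{L^\infty(B_R')}\le 8\sqrt n\,\tau$ (from \eqref{AAA1280}), and \eqref{DDD1700} (from \eqref{AAA1290}). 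This establishes \eqref{DDD1500}, \eqref{DDD1600} and \eqref{DDD1700}.

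Finally, when $\psi\in C^{1,\gamma}(B_{8R}')$ I would upgrade $\varphi$ to $C^{1,\gamma}$ and prove \eqref{DDD1800} by repeating Step~5 of the proof of Lemma~\ref{BBB6000}. Because the local pieces agree on overlaps (the uniqueness used for \eqref{AAA3000}), $\varphi$ inherits the formula
\[
D_{x^k}\varphi(w')=-\frac{c_k\cdot\bigl(1,-D_{y'}\psi([(\varphi(w'),w')V^T]')\bigr)}{c_1\cdot\bigl(1,-D_{y'}\psi([(\varphi(w'),w')V^T]')\bigr)}\qquad(w'\in B_R',\ k\ge2),
\]
whose right-hand side is a composition of the $C^\gamma$ map $D_{y'}\psi$ with the Lipschitz map $w'\mapsto(\varphi(w'),w')V^T$, divided by a quantity $\ge\tfrac12$, and hence is $C^\gamma$; thus $\varphi\in C^{1,\gamma}(B_R')$. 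For the seminorm I would use the splitting $|\tfrac{I}{II}-\tfrac{III}{IV}|\le|\tfrac{I}{II}||\tfrac{IV-II}{IV}|+|\tfrac{I-III}{IV}|$ as in \eqref{BBB9550}, with denominators $\ge\tfrac12$, with $|\tfrac{I}{II}|\le 4\tau$, with $|I-III|\le[D_{y'}\psi]_{C^\gamma(B_{8R}')}\,\bigl|[(\varphi(w'),w')V^T]'-[(\varphi(z'),z')V^T]'\bigr|^\gamma$, and with $|(\varphi(w'),w')-(\varphi(z'),z')|\le\sqrt2\,|w'-z'|$ (from $\|D_{x'}\varphi\|_{L^\infty}\le 8\sqrt n\,\tau$ and $64n\tau^2\le1$) together with the fact that $V^T$ is an isometry; collecting the (very lossy) constants yields $[D_{x'}\varphi]_{C^\gamma(B_R')}\le 16\,[D_{y'}\psi]_{C^\gamma(B_{8R}')}$. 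The matrix bookkeeping and the constant-chasing are routine; the one point that needs care is the non-degeneracy estimate $|D_{x^1}F|\ge\tfrac12$ on the whole strip over $B_{8R}'$, since that is what makes the implicit function theorem (and hence Lemma~\ref{AAA1000}) applicable, and it is what forces the normalization $\tau\le1/(8n)$.
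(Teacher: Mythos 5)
Your proposal is correct and follows essentially the same route as the paper: establish a uniform nondegeneracy bound for $D_{x^1}F$ on all of $B_{8R}'$ from $|V_1+e_1|\le\tau$ and $\|D_{y'}\psi\|_\infty\le\tau$, use the implicit function theorem to obtain local graph functions $\varphi_z$ with the corresponding gradient bound, patch them into a single $\varphi$ on $B_R'$ via Lemma~\ref{AAA1000} with $O=V$, and then repeat the quotient-difference manipulation (as in Step~5 of Lemma~\ref{BBB6000}) for the H\"older seminorm. The only noticeable deviation is cosmetic: you bound $|(V_{21},\dots,V_{n1})|$ directly by $\sqrt{2\tau}$ via unit length of the first column, whereas the paper bounds each $|O_{i1}|\le2\tau$ via orthogonality of rows, and as a result you obtain the sharper lower bound $|D_{x^1}F|\ge 1/2$ in place of the paper's $\ge 1/4$; this propagates into slightly tighter intermediate constants, but all final inequalities land comfortably within the stated $8\sqrt n\tau$ and $16[D_{y'}\psi]_{C^\gamma}$.
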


\begin{proof}
Set $S = \{ (\psi( y'  ), y'  ) \in B_{8R}  :  y'   \in B_{8R}' \} $. Since $|\psi(0')| <R$, we have that  $ (\psi(0'),0') \in S \cap B_{R}$. Thus
\begin{equation}\label{DDD1900}
S \cap B_{R} \not = \emptyset.
\end{equation} 
We prove the lemma by using Lemma \ref{AAA1000}. To use the notation in Lemma \ref{AAA1000}, set
\begin{equation}\label{DDD2100}
O = V.
\end{equation}
One can check that
\begin{equation*}
x^{k} = \left \langle \sum_{j=1}^{n} y^{j} V_{j}, e_{k} \right \rangle
= \sum_{j=1}^{n} y^{j} V_{jk}
\qquad (k \in [1,n]),
\end{equation*}
which implies that $x = y O$. Since $O$ is an orthonormal matrix, we have that
\begin{equation}\label{DDD2300}
x = yO 
\qquad \text{and} \qquad
y = x O^{T} = (x \cdot O_{1}, \cdots, x \cdot O_{n}).
\end{equation}

With the fact that  $\big( (zO) \cdot O_{2}, \cdots, (zO) \cdot O_{n}) 
= \big( (zO) O^{T} \big)'
= z'$, we estimate
\begin{equation}\begin{aligned}\label{DDD2500}
& D_{x^{1}} \left[ x \cdot O_{1} - \psi \big( x \cdot O_{2}, \cdots , x \cdot O_{n} \big) \right] 
\big|_{x=z O} \\
& \quad = O_{11} - \sum_{2 \leq k \leq n} O_{k1} D_{y^{k}}\psi \big( (zO)  \cdot O_{2}, \cdots , (zO)  \cdot O_{n} \big) \\
& \quad = O_{11} - \sum_{2 \leq k \leq n} O_{k1} D_{y^{k}}\psi (z')
\end{aligned}\end{equation}
in $S$. In view of \eqref{DDD1100} and \eqref{DDD2100}, we have
\begin{equation}\label{DDD2600}
O_{11} = V_{11} \leq - 1 + \tau \leq - \frac{1}{2}
\quad \text{and} \quad
|(O_{12},\cdots,O_{1n})| = |(V_{12},\cdots,V_{1n})| \leq \tau.
\end{equation}
Since $\{ O_{1}, \cdots, O_{n} \}$ is orthonormal, we have that $O_{i} \cdot O_{1} =0$ ($i \in [2,n]$), and so
\begin{equation}\label{DDD2700}
|O_{i1}| \leq \frac{|(O_{i2}, \cdots, O_{in}) \cdot (O_{12}, \cdots, O_{1n})|}{|O_{11}|}
\leq 2\tau
\qquad (i \in [2,n]).
\end{equation}
Since $\tau \in (0,1/4]$, it follows from \eqref{DDD1200} that for any $z' \in B_{8R}'$,
\begin{equation}\label{DDD2900}
O_{11} - \sum_{2 \leq k \leq n} O_{k1} D_{y^{k}}\psi (z') 
\leq - \frac{1}{2} + |(O_{21}, \cdots, O_{n1})| \, \| D_{y' }\psi \|_{L^{\infty}(B_{8R}')}
\leq - \frac{1}{4}.
\end{equation}
So we have from \eqref{DDD2500} and \eqref{DDD2900} that
\begin{equation}\label{DDD3000}
D_{x^{1}} \left[ x \cdot O_{1} - \psi \big( x \cdot O_{2}, \cdots , x \cdot O_{n} \big) \right] \bigg|_{x = zO}
\leq -  \frac{1}{4}
\qquad (z \in S).
\end{equation}
For any $z \in S \in B_{8R}$, we have that $zO \in B_{8R}$ and $(zO)' \in B_{8R}'$. So by the implicit function theorem using  \eqref{DDD3000}, for any $z \in S$, there exist a ball $U_{z} \subset B_{8R}'$ and $C^{1}$-function $\varphi_{z}  : U_{z} \to \mathbb{R}$ such that 
\begin{equation}\label{DDD3100}
(zO)' \in U_{z},
\qquad (zO)^{1} = \varphi_{z}  \big( (zO)' \big),
\qquad \{ (\varphi_{z}(w'),w') : w' \in U_{z}\} \subset B_{8R},
\end{equation}
\begin{equation}\label{DDD3200}
0 = (\varphi_{z} (y' ),y' ) \cdot O_{1} - \psi \big( (\varphi_{z}  (y' ),y' ) \cdot O_{2}, \cdots ,  (\varphi_{z}  (y' ),y' ) \cdot O_{n}  \big) 
\quad \text{ in } U_{z},
\end{equation}
and for any $k \in [2,n]$,
\begin{equation}\begin{aligned}\label{DDD3300}
D_{k} \varphi_{z} (y' )
& = - \frac{ D_{x^{k}} \left[ x \cdot O_{1} - \psi \big( x \cdot O_{2}, \cdots , x \cdot O_{n} \big) \right] \big|_{ x = (\varphi_{z} (y' ),y' )}} { D_{x^{1}} \left[ x \cdot O_{1} - \psi \big( x \cdot O_{2}, \cdots , x \cdot O_{n} \big) \right] \big|_{x = (\varphi_{z} (y' ),y' )}}
~  \text{ in } U_{z}.
\end{aligned}\end{equation}
We remark that if $\{ (\varphi_{z}(y' ),y' ) : y'  \in U_{z}\} \not \subset B_{8R}$ then one can choose a smaller ball $U_{z}$ satisfying $\{ (\varphi_{z}(y' ),y' ) : y'  \in U_{z}\}  \subset B_{8R}$. In view of \eqref{DDD1900}, \eqref{DDD2900}, \eqref{DDD3100} and \eqref{DDD3200}, to apply Lemma \ref{AAA1000}, we need to estimate $ \|D_{y' }\varphi_{z} \|_{L^{\infty}(U_{z})}$.

\medskip

Fix $k \in [2,n]$. We next estimate $\| D_{k} \varphi_{z} \|_{L^{\infty}(U_{z})}$. By \eqref{DDD3300}, for any $w' \in U_{z}$,
\begin{equation*}\begin{aligned}
D_{k} \varphi_{z} (w')
& = - \frac{ D_{x^{k}} \left[ x \cdot O_{1} - \psi \big( x \cdot O_{2}, \cdots , x \cdot O_{n} \big) \right] \big|_{x = (\varphi_{z} (w'),w')}} { D_{x^{1}} \left[ x \cdot O_{1} - \psi \big( x \cdot O_{2}, \cdots , x \cdot O_{n} \big) \right] \big|_{x = (\varphi_{z} (w'),w')}} \\
& = - \frac{ O_{1k} - \sum\limits_{2 \leq i \leq n} O_{ik} D_{y^{i}}\psi \big(  (\varphi_{z} (w'),w') \cdot O_{2}, \cdots , (\varphi_{z} (w'),w') \cdot O_{n} \big) }{ O_{11} - \sum\limits_{2 \leq i \leq n} O_{i1} D_{y^{i}}\psi \big(  (\varphi_{z} (w'),w') \cdot O_{2}, \cdots , (\varphi_{z} (w'),w') \cdot O_{n} \big)} \\
& = - \frac{ O_{1k} - \sum\limits_{2 \leq i \leq n} O_{ik} D_{y^{i}}\psi \big( \big[ (\varphi_{z} (w'),w') O^{T} \big]' \big) }{ O_{11} - \sum\limits_{2 \leq i \leq n} O_{i1} D_{y^{i}}\psi \big( \big[ (\varphi_{z} (w'),w') O^{T} \big]' \big) }.
\end{aligned}\end{equation*}
For any $w' \in U_{z}$, we find from \eqref{DDD3100} that $(\varphi_{z}(w'),w') \in B_{8R}$, which implies that $\big[ (\varphi_{z} (w'),w') O^{T} \big]' \in B_{8R}'$. So by Cauchy-Schwarz's inequality, 
\begin{equation*}
\left| O_{1k} - \sum_{2 \leq i \leq n} O_{ik} D_{y^{i}}\psi \big( \big[ (\varphi_{z} (w'),w') O^{T} \big]' \big) \right| 
\leq |O_{1k}| + 
|(O_{2k}, \cdots, O_{nk})| \, \| D_{ y'  } \psi \|_{L^{\infty}(B_{8R}')}.
\end{equation*}
From \eqref{DDD1200} and \eqref{DDD2600}, we have that $|O_{1k}| \leq \tau$ and $\| D_{ y'  } \psi \|_{L^{\infty}(B_{8R}')} \leq \tau$. So by using the above two estimates and \eqref{DDD2900}, we find that 
\begin{equation}\begin{aligned}\label{DDD3700}
\| D_{k} \varphi_{z} \|_{L^{\infty}(U_{z})}
\leq 4 \big( |O_{1k}| + 
|(O_{2k}, \cdots, O_{nk})| \, \| D_{ y'  } \psi \|_{L^{\infty}(B_{8R}')} \big)
\leq 8 \tau.
\end{aligned}\end{equation}
Since $k \in [2,n]$ was arbitrary chosen, we apply Lemma \ref{AAA1000} by comparing \eqref{AAA1220} with \eqref{DDD1900} and \eqref{DDD2900}, \eqref{AAA1240} and \eqref{AAA1250} with \eqref{DDD3100} and \eqref{DDD3700}, and \eqref{AAA1260} and \eqref{DDD3200}. Then there exists $C^{1}$-function $\varphi : B_{R}' \to \mathbb{R}$ such that
\begin{equation*}
\big( \varphi(x' ),x'  \big) \cdot O_{1} - \psi \big( \big( \varphi(x' ),x'  \big) \cdot O_{2}, \cdots , \big( \varphi(x' ),x'  \big) \cdot O_{n} \big) =0
\quad (x'  \in B_{R}'),
\end{equation*}
\begin{equation}\label{DDD3800}
\| D_{x' } \varphi \|_{L^{\infty}(B_{R}')} 
\leq 8 \sqrt{n}\tau,
\qquad
(zO)^{1} = \varphi  \big( (zO)' \big),
\quad (z \in S \cap B_{R}),
\end{equation}
and
\begin{equation*}
\{ (\varphi(x' ),x' ) : x'  \in B_{R} \} \subset B_{8R}.
\end{equation*}
So we find that \eqref{DDD1500}, \eqref{DDD1600} and \eqref{DDD1700} holds from \eqref{DDD2100}.

\medskip

To prove the lemma, it only remains to estimate $[D_{x' }\varphi]_{C^{\gamma}(B_{R}')}$ in \eqref{DDD1800} under the assumption $\psi \in C^{1,\gamma}(B_{8R}')$. We repeat the proof for showing \eqref{BBB9800} in the proof of Lemma \ref{BBB6000}. One can check from  \eqref{DDD1500} that
\begin{equation*}
D_{x^{k}} \varphi (w')
= - \frac{  O_{1k}  - \sum\limits_{2 \leq i \leq n}  O_{ik}   D_{y^{i}}\psi \Big( \big[ (\varphi(w'),w')O^{T}  \big]' \Big)  }{ O_{11} - \sum\limits_{2 \leq i \leq n} O_{i1}  D_{y^{i}}\psi \Big( \big[ (\varphi(w'),w') O^{T} \big]' \Big)  }
\quad \text{ in } B_{R}'.
\end{equation*}
Since $\tau \in (0,1/(8n)]$, for any $w',z' \in B_{R}'$, we have from \eqref{DDD1600} and \eqref{DDD1700} that
\begin{equation}\label{DDD4200}
|(\varphi(w'),w') - (\varphi(z'),z')|
\leq |(8\tau\sqrt{n}|w'-z'|,w'-z')|
\leq 2 |w'-z'|.
\end{equation}
Then for any $w',z' \in B_{R}'$, we have
\begin{equation}\begin{aligned}\label{DDD4300}
|D_{x^{k}}\varphi(w') - D_{x^{k}}\varphi(z')| 
& \leq \bigg| \frac{I}{II}  - \frac{III}{IV} \bigg| \\
& \leq \bigg| \frac{I(IV-II) + II(I-III))}{II \cdot IV} \bigg| \\
& \leq \bigg| \frac{I}{II} \bigg| 
 \bigg| \frac{IV-II}{IV} \bigg| + \bigg| \frac{I-III}{IV} \bigg|,
\end{aligned}\end{equation}
where
\begin{equation*}
I = O_{1k}  - \sum_{2 \leq i \leq n}  O_{ik}  D_{y^{i}}\psi \Big( \big[ (\varphi(w'),w') O^{T} \big]' \Big) ,
\end{equation*}
\begin{equation*}
II = O_{11} - \sum_{2 \leq i \leq n} O_{i1}  D_{y^{i}}\psi \Big( \big[ (\varphi(w'),w') O^{T} \big]' \Big) ,
\end{equation*}
\begin{equation*}
III = O_{1k}  - \sum_{2 \leq i \leq n} O_{ik}   D_{y^{i}}\psi \Big( \big[ (\varphi(z'),z') O^{T} \big]' \Big),
\end{equation*}
and
\begin{equation*}
IV = O_{11}  - \sum_{2 \leq i \leq n} O_{i1}  D_{y^{i}}\psi \Big( \big[ (\varphi(z'),z') O^{T}  \big]' \Big).
\end{equation*}
By \eqref{DDD1700}, we find that $\big[ (\varphi(w'),w')O^{T} \big]' \in B_{8R}'$ for any $w' \in B_{R}'$. So one can check from \eqref{DDD2900} and  \eqref{DDD4200} that
\begin{equation*}\begin{aligned}
\bigg| \frac{IV-II}{IV} \bigg|
& \leq \frac{4 \left| D_{ y'  }\psi \Big( \big[ (\varphi(w'),w')O^{T} \big]' \Big) - D_{ y'  }\psi \Big( \big[ (\varphi(z'),z')O^{T} \big]' \Big) \right|}{ \big\| (-1, D_{ x'  }\psi) \big\|_{L^{\infty}(B_{8R}')} } \\
& \leq \frac{4 [D_{ y'  }\psi]_{C^{\gamma}(B_{8R}')} \left| \big[ (\varphi(w'),w')O^{T} \big]' - (\varphi(z'),z')O^{T} \big]' \right|^{\gamma}}{ \big\| (-1, D_{ x'  }\psi) \big\|_{L^{\infty}(B_{8R}')} } \\
& \leq 8 [D_{ y'  }\psi]_{C^{\gamma}(B_{8R}')} |w'-z'|^{\gamma} ,
\end{aligned}\end{equation*}
and one can easily check from  \eqref{DDD1200}, \eqref{DDD2600} and \eqref{DDD2900} that
\begin{equation*}\begin{aligned}
\bigg| \frac{I}{II} \bigg|
\leq 4 \big( |O_{1k}| + |(O_{2k}, \cdots, O_{nk})| \big \| D_{ x'  }\psi \big \|_{L^{\infty}(B_{8R}')} \big) 
\leq 8 \tau \leq 1.
\end{aligned}\end{equation*}
By combining the above two estimates,
\begin{equation}\begin{aligned}\label{DDD5600}
\bigg| \frac{I}{II} \bigg| \bigg| \frac{IV-II}{IV} \bigg|
\leq 8 [D_{ x'  }\psi]_{C^{\gamma}(B_{8R}')} |w'-z'|^{\gamma} .
\end{aligned}\end{equation}
Similarly,
\begin{equation}\begin{aligned}\label{DDD5700}
\bigg| \frac{I-III}{IV} \bigg|
\leq 8 [D_{ x'  }\psi]_{C^{\gamma}(B_{8R}')} |w'-z'|^{\gamma}.
\end{aligned}\end{equation}
By combining \eqref{DDD4300}, \eqref{DDD5600} and \eqref{DDD5700}, we obtain that
\begin{equation}\label{DDD5800}
|D_{x^{k}}\varphi(w') - D_{x^{k}}\varphi(z')| 
\leq 16 [D_{ x'  }\psi]_{C^{\gamma}(B_{8R}')} |w'-z'|^{\gamma}
\qquad (w',z' \in B_{R}').
\end{equation}
So we discover that the estimate \eqref{DDD1800} holds from \eqref{DDD5800}.
\end{proof}

With Lemma \ref{DDD1000}, we obtain Lemma \ref{EEE1000}.

\begin{lem}\label{EEE1000}
Let $V \in \mathbb{R}^{n \times n}$ be an orthonormal matrix with $\det V > 0$. Let $\{ e_{1}, \cdots, e_{n} \}$ and $\{ V_{1}, \cdots, V_{n}\}$ be the orthonormal bases of $x$-coordinate system and $y$-coordinate system respectively, satisfying
\begin{equation}\label{EEE1100}
|V_{1} + e_{1}| \leq \tau
\quad \text{ for some } \tau \in (0,1/(8n)].
\end{equation}
For $U \subset \mathbb{R}^{n}$, assume that there exists $C^{1}$-function $\psi : B_{8R}' \to \mathbb{R}$ such that
\begin{equation}\label{EEE1300}
U \cap B_{R} = \left \{ \sum\limits_{1 \leq k \leq n} y^{k} V_{k} \in B_{R} : y^{1} > \psi(y') \right \},
\end{equation}
\begin{equation}\label{EEE1400}
|\psi(0')| <R,
\qquad
D_{y'}\psi(0')=0'
\qquad \text{and} \qquad
\| D_{y'}\psi \|_{L^{\infty}(B_{8R}')} \leq \tau.
\end{equation}
Then there exists $C^{1}$-function $\varphi : B_{R}' \to \mathbb{R}$ such that 
\begin{equation}\label{}
U \cap B_{R} 
= \{ (x^{1},x') \in B_{R} : x^{1} < \varphi(x') \},
\end{equation}
with the estimate
\begin{equation}\label{EEE1700}
\| D_{x'} \varphi \|_{L^{\infty}(B_{R}')} 
\leq 8 \sqrt{n}\tau .
\end{equation}
Moreover, if $\psi \in C^{1,\gamma}(B_{8R}')$ then $\varphi \in C^{1,\gamma}(B_{R}')$ with the estimate
\begin{equation}\label{EEE1800}
[D_{x'}\varphi]_{C^{\gamma}(B_{R}')} 
\leq 16 [D_{y'}\psi]_{C^{\gamma}(B_{8R}')}.
\end{equation}
\end{lem}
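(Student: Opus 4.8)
The plan is to get $\varphi$ directly from Lemma~\ref{DDD1000} and then to identify $U\cap B_R$ with the region below the graph of $\varphi$ by means of Lemma~\ref{PPP7000}, in exact analogy with the deduction of Lemma~\ref{CCC1000} from Lemma~\ref{BBB6000} and Lemma~\ref{PPP1000}. Put $S=\{(\psi(y'),y')\in B_{8R}:y'\in B_{8R}'\}$. Since the hypotheses \eqref{EEE1100} and \eqref{EEE1400} are literally \eqref{DDD1100} and \eqref{DDD1200}, Lemma~\ref{DDD1000} applies and produces a $C^{1}$ function $\varphi:B_R'\to\mathbb{R}$ with
\[
\big(\varphi(x'),x'\big)\cdot V_1-\psi\big((\varphi(x'),x')\cdot V_2,\dots,(\varphi(x'),x')\cdot V_n\big)=0\qquad(x'\in B_R'),
\]
with $(zV)^1=\varphi\big((zV)'\big)$ for all $z\in S\cap B_R$, with $\|D_{x'}\varphi\|_{L^{\infty}(B_R')}\le 8\sqrt{n}\,\tau$, with $\{(\varphi(x'),x'):x'\in B_R'\}\subset B_{8R}$, and, when additionally $\psi\in C^{1,\gamma}(B_{8R}')$, with $[D_{x'}\varphi]_{C^{\gamma}(B_R')}\le 16[D_{y'}\psi]_{C^{\gamma}(B_{8R}')}$. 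Thus \eqref{EEE1700} and \eqref{EEE1800} already hold, and only the identity $U\cap B_R=\{(x^1,x')\in B_R:x^1<\varphi(x')\}$ remains.

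For that identity I would apply Lemma~\ref{PPP7000} with the present orthonormal matrix $V$ and with $W:=V$, so that $O=W^{T}V=I_n$; then the sets $V^{\pm}$ of that lemma become $\{(x^1,x')\in B_R:x^1>\varphi(x')\}$ and $\{(x^1,x')\in B_R:x^1<\varphi(x')\}$, and its conclusion \eqref{PPP7800} is precisely the desired identity. The hypotheses \eqref{PPP7300} and \eqref{PPP7200} of Lemma~\ref{PPP7000} are exactly \eqref{EEE1300} and the implicit relation displayed above, and the containment $\{(\varphi(x'),x'):x'\in B_R'\}\subset B_{8R}$ is available, so the only remaining point is the nondegeneracy condition \eqref{PPP7700}, i.e. $U^{+}\cap V^{-}\ne\emptyset$ and $U^{-}\cap V^{+}\ne\emptyset$, where $U^{+}=U\cap B_R$ and $U^{-}=\{\sum_{1\le k\le n}y^{k}V_k\in B_R:y^1<\psi(y')\}$.

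To verify \eqref{PPP7700}, take the point $p_0$ with $y$-coordinates $(\psi(0'),0')$; by $|\psi(0')|<R$ it lies in $B_R$, its $x$-coordinates are $(\psi(0'),0')V$, and $(zV)^1=\varphi((zV)')$ shows $p_0\in V^{0}$, i.e. $p_0$ lies on the graph $x^1=\varphi(x')$. For small $\epsilon>0$, replacing the first $y$-coordinate $\psi(0')$ by $\psi(0')\pm\epsilon$ keeps the point in $B_R$ and moves it into $U^{+}$ (respectively $U^{-}$), because $y^1\gtrless\psi(0')=\psi(y')$; in $x$-coordinates this perturbation equals $\pm\epsilon V_1$, changing $x^1$ by $\pm\epsilon V_{11}$ and $x'$ by $\pm\epsilon(V_{12},\dots,V_{1n})$. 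Since $|V_1+e_1|\le\tau$ forces $V_{11}\le-\tfrac12$ and $|(V_{12},\dots,V_{1n})|\le\tau$, while $\|D_{x'}\varphi\|_{L^{\infty}(B_R')}\le 8\sqrt{n}\,\tau$, the mean value theorem shows that $x^1-\varphi(x')$ passes from $0$ to a value of absolute size at least $\epsilon/2-8\sqrt{n}\,\tau^2\epsilon>0$ and of sign $\mp1$ under the $\pm$ perturbation; hence the two perturbed points lie respectively in $U^{+}\cap V^{-}$ and in $U^{-}\cap V^{+}$, which establishes \eqref{PPP7700}.

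Lemma~\ref{PPP7000} then yields \eqref{PPP7800}, which is exactly the asserted identity, and the proof is complete. The only non-formal point is the verification of \eqref{PPP7700}; it is elementary, using solely the near-antipodality $V_1\approx-e_1$ together with the smallness of $D_{x'}\varphi$, exactly as in the verification of \eqref{CCC3500} within the proof of Lemma~\ref{CCC1000}.
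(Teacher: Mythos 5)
Your proposal is correct and follows the same strategy as the paper's proof: invoke Lemma~\ref{DDD1000} to obtain $\varphi$ together with the relation $(\varphi(x'),x')\cdot V_1-\psi(\cdots)=0$, the gradient and H\"older bounds, and the containment in $B_{8R}$; then identify $U\cap B_R$ with $\{x^1<\varphi(x')\}$ via Lemma~\ref{PPP7000} applied with $W=V$, $O=V^{T}V=I_n$, after checking the nondegeneracy condition \eqref{PPP7700}. The only place you differ from the paper is in how \eqref{PPP7700} is verified: the paper perturbs the base point $\bar z=\psi(0')V_1\in V^0$ in the $x$-direction $e_1$ (so that membership in $V^{+}$ is immediate) and then controls the residual using $D_{y'}\psi(0')=0'$ and the $C^1$ modulus of $\psi$, whereas you perturb in the $y$-direction $V_1$ (so that membership in $U^{\pm}$ is immediate) and then control the residual with the uniform smallness $\|D_{x'}\varphi\|_{L^\infty}\le 8\sqrt{n}\,\tau$ and the bound $|(V_{12},\dots,V_{1n})|\le\tau$. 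Both arguments rest on the same near-antipodality $V_1\approx-e_1$; yours has the mild advantage of using only the already-proved uniform gradient bound on $\varphi$, and yields an explicit quantitative margin $\epsilon/2-8\sqrt{n}\,\tau^{2}\epsilon>0$ valid for all small $\epsilon$, rather than a limiting argument as $\epsilon\to0$. The two verifications are interchangeable and equally elementary.
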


\begin{rem}
In Lemma \ref{EEE1000}, the set $U \cap B_{R} = \left \{ (y^{1},y') \in B_{R} : y^{1} > \psi(y') \right \} $ in $y$-coordinate system is represented by $U \cap B_{R} = \{ (x^{1},x') \in B_{R} : x^{1} < \varphi(x')\}$ with respect to $x$-coordinate system.
\end{rem}

\begin{proof}
With \eqref{EEE1100} and \eqref{EEE1400}, by Lemma \ref{DDD1000}, there exists $C^{1}$-function $\varphi : B_{R}' \to \mathbb{R}$ satisfying
\begin{equation}\label{EEE2400}
\big( \varphi(x'),x' \big) \cdot V_{1} - \psi \big( \big( \varphi(x'),x' \big) \cdot V_{2}, \cdots , \big( \varphi(x'),x' \big) \cdot V_{n} \big) =0
\qquad (x' \in B_{R}'),
\end{equation}
\begin{equation}\begin{aligned}\label{EEE2700}
\| D_{x'} \varphi \|_{L^{\infty}(B_{R}')} 
\leq 8 \tau \sqrt{n},
\qquad
(zV)^{1} = \varphi \big( (zV)' \big)
\quad 
(z \in S \cap B_{8R}),
\end{aligned}\end{equation}
and
\begin{equation}\label{EEE2600}
\big\{ \big( \varphi(x'),x' \big) : x' \in B_{R}' \big\} \subset B_{8R}.
\end{equation}
Moreover, if $\psi \in C^{1,\gamma}(B_{8R}')$ then $\varphi \in C^{1,\gamma}(B_{R}')$ with the estimate
\begin{equation}\label{EEE2650}
[D_{x'}\varphi]_{C^{\gamma}(B_{R}')} 
\leq 16 [D_{y'}\psi]_{C^{\gamma}(B_{8R}')}.
\end{equation}
By \eqref{EEE1300}, \eqref{EEE2400}, \eqref{EEE2600} and \eqref{EEE2650}, to apply Lemma \ref{PPP7000}, set
\begin{equation*}
U^{+} = \left \{ \sum\limits_{1 \leq k \leq n} y^{k} V_{k} \in B_{R} : y^{1} > \psi(y') \right \},
\quad 
V^{+} = \{ (x^{1},x') \in B_{R} : x^{1} > \varphi(x')\},
\end{equation*}
\begin{equation*}
U^{-} = \left \{ \sum\limits_{1 \leq k \leq n} y^{k} V_{k} \in B_{R} : y^{1} < \psi(y') \right \},
\quad
V^{-} = \{ (x^{1},x')  \in B_{R} : x^{1} < \varphi(x')\},
\end{equation*}
and
\begin{equation*}
U^{0} = \left \{ \sum\limits_{1 \leq k \leq n} y^{k} V_{k} \in B_{R} : y^{1} = \psi(y') \right \},
\quad
V^{0} = \{ (x^{1},x') \in B_{R} : x^{1} = \varphi(x')\},
\end{equation*}
and need to check that
\begin{equation}\label{EEE3000}
U^{+} \cap V^{-} \not = \emptyset
\qquad \text{and} \qquad
U^{-} \cap V^{+} \not = \emptyset.
\end{equation}
Since the proof for that $U^{+} \cap V^{-} \not = \emptyset$ is similar to that of $U^{-} \cap V^{+} \not = \emptyset$, we only prove $U^{-} \cap V^{+} \not = \emptyset$. 

Since $V$ is orthonormal, let $O = V^{T} V = Id_{n}$. Since $(\psi(0'),0') \in S \cap B_{R}$, we have from \eqref{EEE2700} that
\begin{equation}\label{EEE3100}
\bar{z} : = (\psi(0'),0') V = \psi(0') V_{1} \in B_{R}
\quad \Longrightarrow \quad
\bar{z}^{1}  = \varphi( \bar{z}')
\quad \text{and} \quad
\bar{z} \in V^{0}. 
\end{equation}
So by the definition of $V^{+} \subset B_{R}$, one can choose $\epsilon_{0} \in (0,8R)$ so that 
\begin{equation}\label{EEE3200}
\epsilon \in (0,\epsilon_{0}]
\qquad \Longrightarrow \qquad
\epsilon e_{1} + \bar{z} \in V^{+}.
\end{equation}
Then we claim that
\begin{equation}\label{EEE3300}
\epsilon e_{1} + \bar{z} 
\in U^{-}
\quad \text{for some } \epsilon \in (0,\epsilon_{0}].
\end{equation}
Since $e_{1} = (e_{1} \cdot V_{1}) V_{1} + \cdots + (e_{1} \cdot V_{n}) V_{n}= (V_{11}) V_{1} + \cdots + (V_{n1}) V_{n}$, we have that
\begin{equation*}
\epsilon e_{1} + \bar{z} = \epsilon e_{1} + \psi(0') V_{1}
=  \sum_{1 \leq k \leq n} [\epsilon V_{k1} + \delta_{k1} \psi(0') ]V_{k}.
\end{equation*}
So to prove the claim \eqref{EEE3300}, by the definition of $U^{-}$, it is suffice to show that
\begin{equation}\label{EEE3400}
\epsilon V_{11} + \psi(0') < 
\psi \big( \epsilon (V_{21}, \cdots, V_{n1}) \big)
\Longleftrightarrow
\epsilon V_{11} < 
\psi \big( \epsilon (V_{21}, \cdots, V_{n1}) \big)
- \psi(0').
\end{equation}
Since $\epsilon_{0} \in (0,8R)$, we have that $ \epsilon(V_{21},\cdots,V_{n1}) \in B_{8R}'$. Thus
\begin{equation}\label{EEE3500}\begin{aligned}
\psi \big( \epsilon (V_{21},\cdots, V_{n1}) \big) - \psi (0') = D_{y'}\psi \big( \bar{\epsilon}(V_{21},\cdots,  V_{n1})  \big)  \cdot \epsilon (V_{21},\cdots,V_{n1}) 
\end{aligned}\end{equation}
for some $\bar{\epsilon} \in (0,\epsilon]$. By \eqref{EEE1100}, we have that $V_{11} <0$. Since $\psi$ is $C^{1}$-function and $D_{y'}\psi (0') = 0'$ in \eqref{EEE1400},
we find from \eqref{EEE3500} that
\begin{equation*}
\epsilon^{-1} \big[ \psi \big( \epsilon (V_{21},\cdots, V_{n1}) \big) - \psi (0') \big]
\to 0 \text{ as } \epsilon \to 0,
\end{equation*}
and there exists a small $\epsilon \in (0,\epsilon_{0}]$ such that \eqref{EEE3400} holds. So the claim \eqref{EEE3300} holds. From \eqref{EEE3200} and \eqref{EEE3300}, we obtain that
$V^{+} \cap U^{-} \not = \emptyset$. Similarly, one can also prove that $V^{-} \cap U^{+} \not = \emptyset$. Thus \eqref{EEE3000} holds, and the lemma follows by applying Lemma \ref{PPP7000} with \eqref{EEE2400}, \eqref{EEE2700}, \eqref{EEE2600}, \eqref{EEE2650} and \eqref{EEE3000} for $V$, $V$ and $Id_{n} =  V^{T}V$ instead of $V$, $W$ and $O=W^{T}V$.
\end{proof}

\section{Coordinate system in composite materials}

Our proof is based on the fact that for two disjoint Reifenberg flat domains $U_{1}$ and $U_{2}$, the (outward) normals on $\partial U_{1} \cap B_{R}$ and $\partial U_{2} \cap B_{R}$ are almost opposite if the radius $R>0$ is sufficiently small. This result obtained in \cite{JK_CV1}, and we start this section with the following definition of Reifenberg flat domains which appears in \cite{JK_CV1}. For $(\delta,R)$-Reifenberg flat domains, for any boundary point and for any scale less that $R$, there exists a coordinate system such that the boundaries trapped between two narrow hyperplanes distance less than $2 \delta R$.

\begin{defn}[Reifenberg flat domain]\label{CTS4000}
$U$ is a $(\delta,R)$-Reifenberg flat domain if for any $y \in \partial U$ and any $r \in (0,R]$, there exists a coordinate system such that
\begin{equation*}
\{ x \in Q_{r}(y) : x^{1} > y^{1} + \delta r\} 
\subset Q_{r}(y) \cap U 
\subset \{ x \in Q_{r}(y) : x^{1} > y^{1} - \delta r\}.
\end{equation*}
\end{defn}

\medskip

We use the following Lemma \ref{CTS6000} for handling the normal vectors on the boundaries of two disjoint Reifenberg flat domains. 

\begin{lem}\cite[Lemma 2.4]{JK_CV1}                                                                                                                                                                                                                                                                                                                                                                                                                                                                                                                                                                                                                                                                                                                                                                                                                                                                                                                                                                                                                                                                                                                                                                                                                                                                                                                                                                                                                                                                                                                                                                                                                                                                                                                                                                                                                                                                                                                                                                                                                                                                                                                                                                                                                                                                                                                                                                                                                                                                                                                                                                                                                                                                                                                                                                                                                                                                                    \label{CTS6000}
There exists $\delta_{1}(n) \in (0,1/16)$ such that the following holds. Suppose that $U_{k}$ and $U_{l}$ are disjoint $(\delta,5R)$-Reifenberg flat domains with $\delta \in (0,\delta_{1}]$. For any $r \in (0,R]$, if $P \in \partial U_{k}$ and $Q \in \partial U_{l}$ satisfy $|P-Q| < r$ then
\begin{equation*}
|\vec{n}_{P,5r} + \vec{n}_{Q,5r}| \leq \frac{\delta^{\frac{1}{4}}}{2},
\end{equation*}
where $\vec{n}_{P,5r}$ and $\vec{n}_{Q,5r}$ are the normal vectors at $P \in \partial U_{k}$ and $Q \in \partial U_{l}$ with the radius $5r$.
\end{lem}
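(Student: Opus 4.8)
The plan is to argue through the approximating hyperplanes that define the normals. For a $(\delta,5R)$-Reifenberg flat domain $U$, a point $P\in\partial U$ and a radius $\rho\in(0,5R]$, write $\vec n_{P,\rho}$ for the inward unit normal of an optimal hyperplane approximating $\partial U$ in $Q_\rho(P)$, so that $\{x\in B_\rho(P):(x-P)\cdot\vec n_{P,\rho}>\delta\rho\}\subset U$, $\{x\in B_\rho(P):(x-P)\cdot\vec n_{P,\rho}<-\delta\rho\}\subset\mathrm{int}(U^c)$, and the standard Reifenberg bookkeeping gives $|\vec n_{P,\rho}-\vec n_{P,\rho/2}|\le C\delta$ and $|\vec n_{P,\rho}-\vec n_{P',\rho}|\le C\delta$ whenever $P,P'\in\partial U$ with $|P-P'|\le\rho/2$. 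Since $U_k$ and $U_l$ are disjoint open sets, $\overline{U_k}\cap U_l=\emptyset$ and $\overline{U_l}\cap U_k=\emptyset$.

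\emph{Step 1: a crude, scale-free bound.} Given $P\in\partial U_k$, $Q\in\partial U_l$ with $|P-Q|<r\le R$, put $a=\vec n_{P,5r}$, $b=\vec n_{Q,5r}$. From $|P-Q|<r$ we get $B_{4r}(P)\subset B_{5r}(P)\cap B_{5r}(Q)$, so the half-spaces $\{(x-P)\cdot a>5\delta r\}$ and $\{(x-Q)\cdot b>5\delta r\}$, intersected with $B_{4r}(P)$, lie in $U_k$ and $U_l$ respectively and are therefore disjoint. If $a+b\neq0$, then $P+s(a+b)/|a+b|$ lies in both half-spaces as soon as $s|a+b|/2>5\delta r+|P-Q|$ and in $B_{4r}(P)$ as soon as $s<4r$; disjointness forces $|a+b|\le(5\delta r+|P-Q|)/2r$. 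In particular $|a+b|<1$, and $|a+b|$ is small as soon as $|P-Q|$ is small relative to $r$.

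\emph{Step 2: make $U_k$ and $U_l$ come close.} Since $Q\in\overline{U_l}$ is disjoint from the open set $U_k$, $\mathrm{dist}(Q,\partial U_k)\le|P-Q|$; alternately taking nearest boundary points of $U_k$ and $U_l$ produces a mutual-nearest-point pair $P_\ast\in\partial U_k$, $Q_\ast\in\partial U_l$, all within a bounded multiple of $r$ of $P$, with $|P_\ast-Q_\ast|=d_\ast\le|P-Q|$ and $B_{d_\ast}(Q_\ast)\cap\overline{U_k}=B_{d_\ast}(P_\ast)\cap\overline{U_l}=\emptyset$. If $d_\ast\le\delta^{1/3}r$, Step 1 at $(P_\ast,Q_\ast)$ gives $|\vec n_{P_\ast,5r}+\vec n_{Q_\ast,5r}|\le C\delta^{1/3}$. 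If $d_\ast>\delta^{1/3}r$, the deep half-space $\{x\in B_{5r}(P_\ast):(x-P_\ast)\cdot\vec n_{P_\ast,5r}>5\delta r\}\subset U_k$ avoids the ball $B_{d_\ast}(Q_\ast)$, and since these convex sets touch near $P_\ast$ the supporting-hyperplane inequality pins the angle between $\vec n_{P_\ast,5r}$ and $(P_\ast-Q_\ast)/d_\ast$ to $\le C\sqrt{\delta r/d_\ast}\le C\delta^{1/3}$, and symmetrically for $\vec n_{Q_\ast,5r}$ and $(Q_\ast-P_\ast)/d_\ast$, so again $|\vec n_{P_\ast,5r}+\vec n_{Q_\ast,5r}|\le C\delta^{1/3}$. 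Transporting back to $(P,Q)$ at the fixed scale $5r$ via the stability estimates costs $O(\delta)$, whence $|a+b|\le C\delta^{1/3}$; choosing $\delta_1(n)$ small enough that $C\delta^{1/3}\le\delta^{1/4}/2$ for all $\delta\le\delta_1(n)$ concludes.

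The main obstacle is the descent in Step 2: keeping the alternating-nearest-point construction (or, alternatively, a minimizing pair of boundary points) inside a ball whose radius is a controlled multiple of $r$, so that the fixed scale $5r$ stays legitimate for every point involved, and accounting cleanly for the accumulated hyperplane-stability errors; the tangency inequality is the only essentially geometric ingredient, and the cut at $d_\ast\sim\delta^{1/3}r$ is what turns $\sqrt{\delta r/d_\ast}$ into a fixed power of $\delta$ with slack to spare for the factor $1/2$.
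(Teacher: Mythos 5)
This lemma is quoted verbatim from \cite[Lemma 2.4]{JK_CV1} and used as a black box; the present paper contains no proof of it, so there is no in-paper argument against which to match your sketch.

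Assessed on its own terms, your outline captures the right mechanisms. Step~1 is correct: pushing from $P$ in the direction $a+b$ and using that the two deep half-balls at scale $5r$ are disjoint does yield $|a+b|\le(5\delta r+|P-Q|)/(2r)$, which closes the argument once the pair is at distance $\lesssim\delta^{1/2}r$. The tangency estimate in Step~2 is also sound: a deep half-ball at scale $5r$ disjoint from a ball $B_{d_\ast}(Q_\ast)$ through $P_\ast$ forces $\vec n_{P_\ast,5r}$ to within $O\bigl(\sqrt{\delta r/d_\ast}\bigr)$ of $(P_\ast-Q_\ast)/d_\ast$, and the cut at $d_\ast\sim\delta^{1/3}r$ makes the two regimes meet with slack below $\delta^{1/4}/2$. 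But the gap you yourself flag is genuine, not bookkeeping. To run the tangency \emph{symmetrically} you need a pair $(P_\ast,Q_\ast)$ that is two-sided tangent, i.e.\ $B_{d_\ast}(Q_\ast)\cap\overline{U_k}=\emptyset$ and $B_{d_\ast}(P_\ast)\cap\overline{U_l}=\emptyset$, both within $O(r)$ of $P$. A single nearest-point step produces only one tangent ball; the resulting control is $\vec n_{P_\ast,5r}\approx(P_\ast-Q)/\mathrm{dist}(Q,\partial U_k)$ and $\vec n_{Q_1,5r}\approx(Q_1-P_\ast)/\mathrm{dist}(P_\ast,\partial U_l)$ for the \emph{new} nearest point $Q_1$, and these two approach directions can differ by $O(1)$ whenever $\mathrm{dist}(P_\ast,\partial U_l)\ll|P_\ast-Q|$; what one extracts is roughly $a\cdot b\lesssim-\,\mathrm{dist}(P_\ast,\partial U_l)/\mathrm{dist}(Q,\partial U_k)$, which is useless unless that ratio is near $1$. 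The alternating iteration decreases the distances monotonically but with no quantitative rate, and the accumulated drift of the iterates is bounded only by $\sum_i d_i$, which need not be $O(r)$; so it is not established that the construction stays in the region where the $(\delta,5R)$-hypothesis and the scale-$5r$ normals are valid. Supplying a controlled descent (a stopping rule certifying an ``almost mutual'' pair within $O(\delta^{1/3})$ after $O(1)$ steps, or a compactness selection inside a ball of radius $Cr$ for which both one-sided tangencies can be verified) is precisely the missing content; until then Step~2 is a plan rather than a proof.
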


With Lemma \ref{CTS7000}, our problem can be turned to a simpler problem. In view of Lemma \ref{CTS7000}, we only need to consider the case that at most two disjoint Reifenberg flat domains intersect a small ball.

\begin{lem}\cite[Lemma 2.5]{JK_CV1}\label{CTS7000}
For $\delta_{1}(n) \in (0,1/16)$ in Lemma \ref{CTS6000}, if $U_{1},U_{2},U_{3}$ are mutually disjoint nonempty $(\delta,10R)$-Reifenberg flat domains with $\delta \in (0,\delta_{1}]$ then
\begin{equation*}
U_{k} \cap B_{R} = \emptyset 
\text{ for some } k \in [1,3].
\end{equation*}
\end{lem}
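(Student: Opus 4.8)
The plan is to argue by contradiction: suppose that $U_1\cap B_R$, $U_2\cap B_R$ and $U_3\cap B_R$ are all nonempty, and derive a contradiction from Lemma \ref{CTS6000}. The first step is to produce a boundary point of each $U_k$ inside $B_R$. Since the $U_j$ are pairwise disjoint and each meets $B_R$, no $U_k$ can contain $B_R$ (otherwise $U_j\cap B_R\subset U_j\cap U_k=\emptyset$ for $j\neq k$). As $B_R$ is connected, $U_k$ is open, $U_k\cap B_R\neq\emptyset$ and $B_R\not\subset U_k$, the set $\partial U_k\cap B_R$ cannot be empty: if it were, then $B_R$ would split into the disjoint union of the nonempty open set $U_k\cap B_R$ and the open set $B_R\setminus\overline{U_k}$, forcing $B_R\subset\overline{U_k}\setminus\partial U_k=U_k$, a contradiction. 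So I would fix $P_k\in\partial U_k\cap B_R$ for $k\in\{1,2,3\}$.

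Next, for each pair $j\neq k$ I would apply Lemma \ref{CTS6000} with the radius parameter there taken to be $2R$ and with $r:=2R$. Its hypothesis then requires $U_j,U_k$ to be disjoint and $(\delta,10R)$-Reifenberg flat, which is exactly what is assumed, while $|P_j-P_k|<2R=r$ because $P_j,P_k\in B_R$. Hence $|\vec n_{P_j,10R}+\vec n_{P_k,10R}|\le\delta^{1/4}/2$ for all three pairs. Writing $n_k:=\vec n_{P_k,10R}$, which is a unit vector, the identity $n_1=\tfrac12\big[(n_1+n_2)+(n_1+n_3)-(n_2+n_3)\big]$ together with the triangle inequality gives $1=|n_1|\le\tfrac34\,\delta^{1/4}$, so $\delta\ge(4/3)^4>1$, contradicting $\delta\le\delta_1<1/16$. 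Therefore $U_k\cap B_R=\emptyset$ for some $k\in\{1,2,3\}$.

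The only part of this argument that is not pure bookkeeping with Lemma \ref{CTS6000} and the triangle inequality is the topological observation that each $U_k$ has a boundary point inside $B_R$; this is the place that needs a little care, though it is elementary once connectedness of $B_R$ is used. I would also remark that the scale $10R$ in the hypothesis (rather than $5R$) is chosen precisely so that two points of $B_R$, which may be nearly $2R$ apart, still lie within the reach of Lemma \ref{CTS6000} when its radius parameter is set to $2R$.
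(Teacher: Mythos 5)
The paper does not give its own proof of this lemma; it simply cites \cite[Lemma 2.5]{JK_CV1}. Your self-contained argument is correct and is the standard way to deduce this from Lemma \ref{CTS6000}: you first rule out $B_R\subset U_k$ using pairwise disjointness, then use connectedness of $B_R$ and openness of each $U_k$ to find a boundary point $P_k\in\partial U_k\cap B_R$, and finally apply Lemma \ref{CTS6000} (with the scale there taken to be $2R$, so the hypothesis $(\delta,10R)$-Reifenberg flat is exactly what is needed) to each of the three pairs and derive $1\le\tfrac{3}{4}\delta^{1/4}$, contradicting $\delta\le\delta_1<1/16$. Two small points worth making explicit if you were to write this up: Reifenberg flat \emph{domains} are understood to be open (your disconnection argument uses $\overline{U_k}\setminus\partial U_k=U_k$, which needs openness), and the vectors $\vec n_{P_k,10R}$ are well-defined unit vectors precisely because $U_k$ is Reifenberg flat up to scale $10R$. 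With these noted, the proof is complete.
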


\medskip

We use the following $C^{1,\gamma}$-class domains which was defined in Definition \ref{intro_domain}.

\introdomain*

\begin{rem}
If $U$ is $(C^{1,\gamma},R,\theta)$-domain then $\mathbb{R}^{n} \backslash \overline{U}$ is also $(C^{1,\gamma},R,\theta)$-domain.
\end{rem}

To use Lemma \ref{CTS6000}, we need to check that $(C^{1,\gamma},R,\theta)$-domains are also Reifenberg flat domains, which will be done in Lemma \ref{CTS8000} and Lemma \ref{GGG1000}.

\begin{lem}\label{CTS8000}
For any $\tau \in (0,1]$, there exists $R_{1} = R_{1}(n,\gamma,\theta,\tau) \in (0,1]$ such that the following holds for any $R \in (0,R_{1}]$. If $U$ is $(C^{1,\gamma},8R,\theta)$-domain with $\partial U \cap B_{R} \not = \emptyset$ then there exist an orthonormal matrix $V$ with $\det V>0$ and  $C^{1,\gamma}$-function $\varphi : B_{R}' \to \mathbb{R}$ such that
\begin{equation}\label{CTS8400}
U \cap B_{R} = \left \{ \sum\limits_{1 \leq k \leq n} y^{k} V_{k} \in B_{R} : y^{1} > \varphi(y') \right\}
\end{equation}
with the estimate
\begin{equation}\label{CTS8500}
|\varphi(0')| < R,
~ ~
D_{y'}\varphi(0')=0',
~ ~
\| D_{y'}\varphi \|_{L^{\infty}(B_{R}')} 
\leq \tau
~ ~ \text{and} ~ ~
[D_{y'}\varphi]_{C^{\gamma}(B_{R}')} 
\leq 18 n \theta.
\end{equation}
Also if $\mathbf{0} \in \partial U$ then $\varphi(0')=0$. 
\end{lem}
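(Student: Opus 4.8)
The plan is to apply Definition~\ref{intro_domain} at the origin to obtain a single $C^{1,\gamma}$ graph function $\psi$ on $B_{8R}'$, feed it into Lemma~\ref{CCC1000}, and then sharpen the Lipschitz bound $\|D_{y'}\varphi\|_{L^{\infty}(B_{R}')}\le 2\sqrt{n}$ of \eqref{CCC1600} to $\le\tau$ by combining $D_{y'}\varphi(0')=0'$ with an $R$-free H\"older estimate. Concretely, I would first choose $R_{1}=R_{1}(n,\gamma,\theta,\tau)\in(0,1]$ small enough that
\begin{equation*}
n\theta(16R)^{\gamma}\le 1/4
\qquad\text{and}\qquad
18n\theta R^{\gamma}\le\tau
\qquad\text{for all }R\in(0,R_{1}]
\end{equation*}
(both conditions are vacuous when $\theta=0$, in which case $\psi\equiv0$ and one takes $V=I_{n}$, $\varphi\equiv0$; so assume $\theta>0$), and fix $R\in(0,R_{1}]$. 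Since $\partial U\cap B_{R}\ne\emptyset$ we have $B_{8R}\cap\partial U\ne\emptyset$, so Definition~\ref{intro_domain} at $y=\mathbf{0}$ produces an orthogonal change of coordinates fixing $\mathbf{0}$ and a $C^{1,\gamma}$-function $\psi:B_{8R}'\to\mathbb{R}$ with $\|\psi\|_{C^{1,\gamma}(B_{8R}')}\le\theta$ such that, in the new frame, $U\cap B_{8R}=\{x\in B_{8R}:x^{1}>\psi(x')\}$. Composing with the reflection $x^{2}\mapsto-x^{2}$ if necessary --- which changes neither $\|\psi\|_{C^{1,\gamma}(B_{8R}')}$ nor the graph representation --- I may take this change orientation preserving; then it suffices to prove the lemma in the new frame, since the orthonormal matrix $V$ found there, composed with this fixed frame change, is an orthonormal matrix of positive determinant representing $U\cap B_{R}$ as a graph in the original frame.

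Next I would verify the hypotheses of Lemma~\ref{CCC1000} for $U$ and $\psi$, with $S:=\{(\psi(x'),x')\in B_{8R}:x'\in B_{8R}'\}$. From the representation $U\cap B_{8R}=\{x^{1}>\psi(x')\}$ one gets $\partial U\cap B_{8R}\subset S$, hence $\emptyset\ne\partial U\cap B_{R}\subset S\cap B_{R}$; the bound $[D_{x'}\psi]_{C^{\gamma}(B_{8R}')}\le\theta$ and the choice of $R_{1}$ give $n[D_{x'}\psi]_{C^{\gamma}(B_{8R}')}(16R)^{\gamma}\le 1/4$; and restricting $U\cap B_{8R}=\{x^{1}>\psi(x')\}$ to $B_{R}$ gives $U\cap B_{R}=\{(x^{1},x')\in B_{R}:x^{1}>\psi(x')\}$. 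Thus \eqref{CCC1200} and \eqref{CCC1300} hold and Lemma~\ref{CCC1000} furnishes an orthonormal $V$ with $\det V>0$ and a $C^{1,\gamma}$-function $\varphi:B_{R}'\to\mathbb{R}$ with $U\cap B_{R}=\{\sum_{1\le k\le n}y^{k}V_{k}\in B_{R}:y^{1}>\varphi(y')\}$, satisfying $|\varphi(0')|<R$, $D_{y'}\varphi(0')=0'$, $\|D_{y'}\varphi\|_{L^{\infty}(B_{R}')}\le 2\sqrt{n}$, and
\begin{equation*}
[D_{y'}\varphi]_{C^{\gamma}(B_{R}')}\le\frac{18n[D_{y'}\psi]_{C^{\gamma}(B_{8R}')}}{\|(-1,D_{y'}\psi)\|_{L^{\infty}(B_{8R}')}}\le 18n\theta ,
\end{equation*}
the last inequality because $\|(-1,D_{y'}\psi)\|_{L^{\infty}(B_{8R}')}\ge1$ and $[D_{y'}\psi]_{C^{\gamma}(B_{8R}')}\le\theta$; moreover $\varphi(0')=0$ whenever $\psi(0')=0$. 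Since the frame change fixes $\mathbf{0}$, if $\mathbf{0}\in\partial U$ then $\mathbf{0}\in S$, i.e.\ $\psi(0')=0$, so $\varphi(0')=0$, which is the last assertion.

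It remains to sharpen the Lipschitz bound. Since $D_{y'}\varphi(0')=0'$, for every $y'\in B_{R}'$,
\begin{equation*}
|D_{y'}\varphi(y')|=|D_{y'}\varphi(y')-D_{y'}\varphi(0')|\le[D_{y'}\varphi]_{C^{\gamma}(B_{R}')}\,|y'|^{\gamma}\le 18n\theta R^{\gamma}\le\tau ,
\end{equation*}
so $\|D_{y'}\varphi\|_{L^{\infty}(B_{R}')}\le\tau$; together with the bounds of the previous paragraph this establishes \eqref{CTS8400} and \eqref{CTS8500}. The argument is essentially a corollary of Lemma~\ref{CCC1000}; the only points requiring care are the bookkeeping of the two successive orthogonal frame changes (settled by the orientation fix above) and the observation that the H\"older seminorm bound $[D_{y'}\varphi]_{C^{\gamma}(B_{R}')}\le 18n\theta$ is independent of $R$ --- it is exactly this $R$-independence that lets the smallness of $R_{1}$ push $\|D_{y'}\varphi\|_{L^{\infty}(B_{R}')}$ below the prescribed $\tau$ --- so I do not anticipate a genuine obstacle beyond assembling these ingredients.
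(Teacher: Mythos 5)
Your proposal is correct and follows essentially the same route as the paper's proof: apply Definition~\ref{intro_domain} at radius $8R$ centered at $\mathbf{0}$ to obtain $\psi$, verify the hypotheses \eqref{CCC1200}--\eqref{CCC1300} of Lemma~\ref{CCC1000}, extract $V$, $\varphi$ with the Hölder estimate $[D_{y'}\varphi]_{C^\gamma(B_R')}\le 18n\theta$, and then use $D_{y'}\varphi(0')=0'$ together with this $R$-independent seminorm bound to shrink $R_1$ until $\|D_{y'}\varphi\|_{L^\infty(B_R')}\le\tau$. The only differences from the paper are cosmetic bits of extra care on your part (choosing $R_1$ to satisfy both smallness conditions in one go rather than nesting $R_*\supset R_1$, explicitly dispatching the degenerate case $\theta=0$, and fixing orientation by a reflection in $x^2$), none of which change the substance; the paper leaves these implicit.
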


\begin{proof}
We take $R_{*} = R_{*}(n,\gamma,\theta) \in (0,R_{1}]$ so that 
\begin{equation}\label{CTS8600}
n \theta (16R_{*})^{\gamma} \leq 1/4.
\end{equation}
Since $U$ is $(C^{1,\gamma},8R,\theta)$-domain, there exists $C^{1,\gamma}$-function $B_{8R}' \to \mathbb{R}$ such that
\begin{equation}\label{CTS8700}
U \cap B_{R} = \{ (x^{1},x') \in B_{R} : x^{1} > \psi(x') \}
\qquad \text{and} \qquad
\| \psi \|_{C^{1,\gamma}(B_{R}')} \leq \theta.
\end{equation}
Recall from \eqref{CTS8600}, we find that $n [ D_{x'}\psi]_{C^{\gamma}(B_{8R}')} (16R)^{\gamma} \leq n\theta(16R_{*})^{\gamma} \leq 1/4$. So from the fact that $\partial U \cap B_{R} \not = \emptyset$, we obtain that
\begin{equation}\label{CTS8800}
\{ ( \psi(x'),x') \in B_{8R} : x \in B_{8R}' \} \cap B_{R} \not = \emptyset 
\text{ and }
n [ D_{x'}\psi]_{C^{\gamma}(B_{8R}')} (16R)^{\gamma} \leq 1/4.
\end{equation}
Apply Lemma \ref{CCC1000} by comparing  \eqref{CTS8700} and \eqref{CTS8800} with \eqref{CCC1200} and \eqref{CCC1300}. Then there exist an orthonormal matrix $ V \in \mathbb{R}^{n \times n}$ with $\det V >0$ and  $C^{1,\gamma}$-function $\varphi : B_{R}' \to \mathbb{R}$ such that
\begin{equation*}\label{}
U \cap B_{R} = \left \{ \sum\limits_{1 \leq k \leq n} y^{k} V_{k} \in B_{R} : y^{1} > \varphi(y') \right\}
\end{equation*}
with the estimate
\begin{equation*}
|\varphi(0')| < R,
\quad
D_{y'}\varphi(0')=0'
\quad \text{and} \quad
[D_{y'}\varphi]_{C^{\gamma}(B_{R}')} 
\leq 18n [D_{x'}\psi]_{C^{\gamma}(B_{8R}')}
\leq 18n\theta.
\end{equation*}
So there exists $R_{1} = R_{1}(n,\gamma,\theta,\tau) \in (0,R_{*}]$ such that  \eqref{CTS8400} and \eqref{CTS8500} holds. Also if $\mathbf{0} \in \partial U$ then $\psi(0')=0$ in \eqref{CTS8700}, and so Lemma \ref{CCC1000} gives that $\varphi(0')=0$. 
\end{proof}

In view of Lemma \ref{CTS8000}, for $(C^{1,\gamma},R,\theta)$-domains, there exists a coordinate system such that the boundary is almost flat, and which will be obtained Lemma \ref{GGG1000}.

\begin{lem}\label{GGG1000}
For any $\delta \in (0,1/8]$, there exists $ R_{2}(n,\gamma,\theta,\delta) \in (0,R_{1}(n,\gamma,\theta,2)]$ such that if $U$ is $(C^{1,\gamma},8R,\theta)$-domain with $R \in (0,R_{2}]$ then $U$ is $(\delta,R)$-Reifenberg flat domain. Set $R_{3}(n,\gamma,\theta) = R_{2}(n,\gamma,\theta,\delta_{1})/10$ for $\delta_{1}(n)$ in Lemma \ref{CTS6000}.
\end{lem}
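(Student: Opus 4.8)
The goal is to show that a $(C^{1,\gamma},8R,\theta)$-domain is $(\delta,R)$-Reifenberg flat once $R$ is small, so the plan is to reduce the Reifenberg-flatness condition at an arbitrary boundary point and arbitrary small scale $r\le R$ to the graph representation furnished by Lemma \ref{CTS8000}. First I would fix $y\in\partial U$ and $r\in(0,R]$. Since $U$ is a $(C^{1,\gamma},8R,\theta)$-domain and $8R\ge 8r$, after translating $y$ to the origin the hypothesis of Lemma \ref{CTS8000} is met at scale $r$ (with $\partial U\cap B_r\ni\mathbf 0\ne\emptyset$), provided $r\le R_1(n,\gamma,\theta,\tau)$ for the parameter $\tau$ we are about to choose; this is guaranteed by taking $R_2\le R_1(n,\gamma,\theta,2)$ and in fact we only need $\tau$ of size comparable to $\delta$.

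Applying Lemma \ref{CTS8000} at scale $r$ with the choice $\tau=\tau(\delta)$ yields an orthonormal $V$ with $\det V>0$ and a $C^{1,\gamma}$ function $\varphi:B_r'\to\mathbb R$ with
\begin{equation*}
U\cap B_r=\Big\{\textstyle\sum_{1\le k\le n}y^kV_k\in B_r:y^1>\varphi(y')\Big\},
\qquad \varphi(0')=0,\quad D_{y'}\varphi(0')=0',\quad \|D_{y'}\varphi\|_{L^\infty(B_r')}\le\tau .
\end{equation*}
From $\varphi(0')=0$ and the Lipschitz bound, $|\varphi(y')|\le\tau|y'|\le\tau r$ on $B_r'$. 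In the $y$-coordinate system one then checks the two set inclusions defining Reifenberg flatness on the cube $Q_r$: if $y^1>\tau r\ge\varphi(y')$ the point lies in $U$, and if $y\in U\cap Q_r$ then $y^1>\varphi(y')\ge-\tau r$. Hence, choosing $\tau\le\delta$ (so in particular $\tau\le 1/8\le 1$, legitimising the use of Lemma \ref{CTS8000} with this $\tau$ and $R_1(n,\gamma,\theta,\tau)$), and setting $R_2(n,\gamma,\theta,\delta):=\min\{R_1(n,\gamma,\theta,\delta),\,R_1(n,\gamma,\theta,2)\}=R_1(n,\gamma,\theta,\delta)$ (using monotonicity of $\tau\mapsto R_1$, or simply taking the min explicitly), we get
\begin{equation*}
\{x\in Q_r:x^1>\delta r\}\subset Q_r\cap U\subset\{x\in Q_r:x^1>-\delta r\},
\end{equation*}
which is exactly the condition in Definition \ref{CTS4000} at the point $y$ and scale $r$. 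Since $y$ and $r$ were arbitrary, $U$ is $(\delta,R)$-Reifenberg flat.

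The main point to be careful about — rather than a deep obstacle — is the bookkeeping of radii and parameters: Lemma \ref{CTS8000} is stated for $(C^{1,\gamma},8R,\theta)$-domains and produces a graph on $B_R'$ from data on $B_{8R}'$, so when we invoke it at an interior scale $r\le R$ we must make sure $U$ is still a $(C^{1,\gamma},8r,\theta)$-domain (it is, since the defining property holds for every ball $B_{8R}(y)$ with $B_{8R}(y)\cap\partial U\ne\emptyset$ and $8r\le 8R$), and that the smallness threshold $r\le R_1(n,\gamma,\theta,\tau)$ is absorbed into the final choice of $R_2$. Finally, the last sentence of the lemma is just a definition: we set $R_3(n,\gamma,\theta):=R_2(n,\gamma,\theta,\delta_1)/10$ with $\delta_1(n)$ from Lemma \ref{CTS6000}, and note for later use that a $(C^{1,\gamma},8R,\theta)$-domain with $R\le R_3$ is then $(\delta_1,10R)$-Reifenberg flat, so that Lemmas \ref{CTS6000} and \ref{CTS7000} become applicable.
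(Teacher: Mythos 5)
Your argument is correct and rests on the same key lemma as the paper (Lemma \ref{CTS8000}), but you extract the flatness at scale $r$ in a different way. The paper invokes Lemma \ref{CTS8000} \emph{once}, at scale $R$, and then obtains the Reifenberg sandwich at \emph{every} $r\le R$ simultaneously, in a single coordinate frame, from the second-order Taylor estimate: since $\varphi(0')=0$, $D_{y'}\varphi(0')=0'$ and $[D_{y'}\varphi]_{C^\gamma(B_R')}\le 18n\theta$, one has
\begin{equation*}
|\varphi(y')| = |\varphi(y')-\varphi(0')-D_{y'}\varphi(0')\cdot y'| \le 18n\theta\,|y'|^{1+\gamma} \le 18n\theta\, r^{1+\gamma} \le \delta r
\end{equation*}
for $|y'|<r$, once $R_2$ satisfies $36n\theta(4R_2)^\gamma\le\delta$. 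You instead re-invoke Lemma \ref{CTS8000} afresh at each scale $r\le R$ with the input parameter $\tau$ taken comparable to $\delta$, and use only the first-order Lipschitz bound $\|D_{y'}\varphi\|_{L^\infty(B_r')}\le\tau\le\delta$ together with $\varphi(0')=0$ to get $|\varphi|\le\delta r$ on $B_r'$. Both routes reach the conclusion with essentially the same constants; the paper's is more economical (one application of the graph lemma and one coordinate frame serving all scales), while yours highlights how the parameter $\tau$ in Lemma \ref{CTS8000} directly drives the Reifenberg flatness, at the cost of having to track the threshold $R_1(n,\gamma,\theta,\delta)$, which you correctly fold into the definition of $R_2$ by taking a minimum. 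One small caveat shared by both write-ups: Lemma \ref{CTS8000} describes $U$ only inside the ball $B_r$, whereas Definition \ref{CTS4000} is stated on the cylinder $Q_r\supsetneq B_r$, so strictly speaking one should apply the graph lemma at a slightly larger radius (e.g.\ $2r$) and adjust constants; the paper glosses over this as well.
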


\begin{proof}
Let $\mathbf{0} \in \partial U$.
Since $U$ is $(C^{1,\gamma},8R,\theta)$-domain, by Lemma \ref{CTS8000}, there exist an orthonormal matrix $V$ with $\det V>0$ and  $C^{1,\gamma}$-function $\varphi : B_{2R}' \to \mathbb{R}$ such that
\begin{equation}\label{GGG3400}
U \cap B_{R} = \left \{ \sum\limits_{1 \leq k \leq n} y^{k} V_{k} \in B_{R} : y^{1} > \varphi(y') \right\}
\end{equation}
with the estimate
\begin{equation}\label{GGG3600}
\varphi(0')=0,
\quad
D_{y'}\varphi(0')=0'
\quad \text{and} \quad
[D_{y'}\varphi]_{C^{\gamma}(B_{R}')} 
\leq 18 n \theta.
\end{equation}
Choose $R_{2} = R_{2}(n,\gamma,\theta,\delta) \in (0,R_{1}]$ so that $36 n \theta (4R_{2})^{\gamma} \leq \delta$. Then 
 \eqref{GGG3600} gives that
\begin{equation*}
\sup_{y' \in B_{R}'} |\varphi(y')| 
= \sup_{y' \in B_{R}'} |\varphi(y') - \varphi(0') - D_{y'}\varphi(0') \cdot y'| 
\leq [D_{y'}\varphi]_{C^{\gamma}(B_{R}')} (2R)^{\gamma} < \delta R.
\end{equation*} 
So the boundary $\partial U$ in $B_{R}$ is trapped between two narrow hyperplanes with distance less than $2\delta R$. Since the boundary point can be arbitrary chosen, $U$ is $(\delta,R)$-Reifenberg flat domain by Definition \ref{CTS4000}.
\end{proof}

If two disjoint $(C^{1,\gamma},R,\theta)$-domains $U_{1}$ and $U_{2}$ intersect a ball, then one can find a coordinate system such that $\partial U_{1}$ and $\partial U_{2}$ become graph, and $\partial U_{1}$ is almost flat in that ball.

\begin{lem}\label{HHH1000}
For any $\tau \in (0,1]$, there exists $R_{4} = R_{4}(n,\gamma,\theta,\tau) \in (0,\min \{R_{1}, R_{3}\} ]$ such that the following holds for any $R \in (0,R_{4}]$. Suppose that $U_{1}$ and $U_{2}$ are disjoint $(C^{1,\gamma},80R,\theta)$-domains with $\partial U_{1} \cap B_{R} \not = \emptyset$ and $\partial U_{2} \cap B_{R} \not = \emptyset$. Also assume that there exists  $C^{1,\gamma}$-function $\varphi_{1} : B_{R}' \to \mathbb{R}$ such that
\begin{equation}\label{HHH1300}
U_{1} \cap B_{R} = \left \{ (x^{1},x') \in B_{R} : x^{1} > \varphi_{1}(x') \right\}
\qquad \text{and} \qquad
|\varphi_{1}(0')| < R
\end{equation}
with the estimate
\begin{equation*}
D_{x'}\varphi_{1}(0')=0',
\qquad
\| D_{x'}\varphi_{1} \|_{L^{\infty}(B_{R}')} 
\leq 2\sqrt{n}
\qquad \text{and} \qquad
[D_{x'}\varphi_{1}]_{C^{\gamma}(B_{R}')} 
\leq 18 n \theta.
\end{equation*}
Then there exists $C^{1,\gamma}$-function $\varphi_{2} : B_{R}' \to \mathbb{R}$ such that
\begin{equation*}
U_{2} \cap B_{R} 
 = \{ (x^{1},x') \in B_{R} : x^{1} < \varphi_{2}(x')\}
\end{equation*}
with the estimates
\begin{equation*}
\|D_{x'} \varphi_{1} \|_{L^{\infty}(B_{R}')}, \|D_{x'} \varphi_{2} \|_{L^{\infty}(B_{R}')} \leq  \tau
~ \text{ and } ~
[D_{x'} \varphi_{1}]_{C^{\gamma}(B_{R}')},
[D_{x'} \varphi_{2}]_{C^{\gamma}(B_{R}')} \leq 288n\theta.
\end{equation*}
\end{lem}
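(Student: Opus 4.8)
The plan is to realise $U_2$ near $\mathbf 0$ as a graph in a suitably rotated coordinate system by Lemma~\ref{CTS8000}, to use the disjointness together with the Reifenberg flatness of $C^{1,\gamma}$-domains (Lemmas~\ref{GGG1000} and~\ref{CTS6000}) to show that the first basis vector $V_1$ of that system is almost $-e_1$, and then to convert the graph into a subgraph over the original $x^1$-variable by Lemma~\ref{EEE1000}. The bounds on $\varphi_1$ cost nothing: from $D_{x'}\varphi_1(0')=0'$ and $[D_{x'}\varphi_1]_{C^\gamma(B_R')}\le 18n\theta$ we get $\|D_{x'}\varphi_1\|_{L^\infty(B_R')}\le 18n\theta R^\gamma\le 18n\theta R_4^\gamma\le\tau$ once $R_4$ is small, and $[D_{x'}\varphi_1]_{C^\gamma(B_R')}\le 288n\theta$ trivially. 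Fix an auxiliary constant $\tau_\ast=\tau_\ast(n,\tau)$ with $\tau_\ast\le 1/(8n)$, $\tau_\ast<1/4$ and $8\sqrt n\,\tau_\ast\le\tau$, and a Reifenberg constant $\delta=\delta(n,\tau)\in(0,\delta_1(n)]$ with $\delta^{1/4}/2\le\tau_\ast/2$; below, $C=C(n,\gamma,\theta)$ denotes a constant that may change from line to line, and all smallness requirements on $R_4$ are in terms of $n,\gamma,\theta,\tau$.

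Since $U_2$ is a $(C^{1,\gamma},80R,\theta)$-domain it is a $(C^{1,\gamma},8\cdot 10R,\theta)$-domain, and $\partial U_2\cap B_{10R}\supset\partial U_2\cap B_R\ne\emptyset$; so Lemma~\ref{CTS8000}, applied with radius $10R$ (legitimate once $10R_4\le R_1(n,\gamma,\theta,\tau_\ast)$), produces an orthonormal $V$ with $\det V>0$ and $\psi\in C^{1,\gamma}(B_{10R}')$ satisfying
\begin{equation*}
U_2\cap B_{10R}=\Big\{\textstyle\sum_{1\le k\le n}y^k V_k\in B_{10R}:\ y^1>\psi(y')\Big\},
\end{equation*}
with $D_{y'}\psi(0')=0'$, $\|D_{y'}\psi\|_{L^\infty(B_{10R}')}\le\tau_\ast$, $[D_{y'}\psi]_{C^\gamma(B_{10R}')}\le 18n\theta$ and $|\psi(0')|<10R$. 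Restricting the displayed identity to $B_R\subset B_{10R}$ shows that in the $V$-coordinates $\partial U_2\cap B_R=\{y\in B_R:\ y^1=\psi(y')\}$, so the nonemptiness of $\partial U_2\cap B_R$ gives a $y_0$ with $y_0^1=\psi(y_0')$ and $|y_0|<R$; hence
\begin{equation*}
|\psi(0')|\le|\psi(y_0')-\psi(0')|+|y_0^1|\le\tau_\ast|y_0'|+|y_0|<(1+\tau_\ast)R<\tfrac{5}{4}R.
\end{equation*}

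The crux is the estimate $|V_1+e_1|\le\tau_\ast$. From the graph hypothesis, $\partial U_1\cap B_R=\{(\varphi_1(x'),x'):x'\in B_R'\}\cap B_R$, and since $D_{x'}\varphi_1(0')=0'$ and $[D_{x'}\varphi_1]_{C^\gamma}\le 18n\theta$ the classical outward unit normal of $U_1$ at any point of $\partial U_1\cap B_R$, namely $(-1,D_{x'}\varphi_1(x'))/|(-1,D_{x'}\varphi_1(x'))|$, lies within $CR_4^\gamma$ of $-e_1$. By Lemma~\ref{GGG1000}, for $R_4$ small both $U_1$ and $U_2$ are $(\delta,10R)$-Reifenberg flat; picking $P\in\partial U_1\cap B_R$ and $Q\in\partial U_2\cap B_R$ we have $|P-Q|<2R$, so Lemma~\ref{CTS6000} gives $|\vec{n}_{P,10R}+\vec{n}_{Q,10R}|\le\delta^{1/4}/2$. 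Because $U_1,U_2$ are $C^{1,\gamma}$-domains with norm $\le\theta$, their Reifenberg normals at the small scale $10R$ differ from their classical outward normals by at most $CR_4^\gamma$; and the construction underlying Lemma~\ref{CTS8000} (through Lemmas~\ref{BBB6000} and~\ref{CCC1000}) exhibits $V_1$ as the inward classical normal of $U_2$ at a boundary point of $U_2$ in $B_{10R}$, which differs from the inward normal at $Q$ by at most $CR_4^\gamma$ since $[D_{y'}\psi]_{C^\gamma(B_{10R}')}\le 18n\theta$. Chaining these comparisons, $V_1$ lies within $\delta^{1/4}/2+CR_4^\gamma\le\tau_\ast$ of $-e_1$. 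This is the main obstacle: all the geometric content sits here, and one must make the chain "Reifenberg normal $\leftrightarrow$ classical normal $\leftrightarrow$ $V_1$" quantitative enough to clear the threshold $1/(8n)$ required by Lemma~\ref{EEE1000}.

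It remains to apply Lemma~\ref{EEE1000} to $U_2$ with radius $\rho:=\tfrac{5}{4}R$, so that $8\rho=10R$ and $\psi$ is defined on $B_{8\rho}'=B_{10R}'$: with $|V_1+e_1|\le\tau_\ast\le 1/(8n)$, $|\psi(0')|<\rho$, $D_{y'}\psi(0')=0'$ and $\|D_{y'}\psi\|_{L^\infty(B_{10R}')}\le\tau_\ast$, it yields $\varphi\in C^{1,\gamma}(B_\rho')$ with $U_2\cap B_\rho=\{(x^1,x')\in B_\rho:\ x^1<\varphi(x')\}$, $\|D_{x'}\varphi\|_{L^\infty(B_\rho')}\le 8\sqrt n\,\tau_\ast$ and $[D_{x'}\varphi]_{C^\gamma(B_\rho')}\le 16[D_{y'}\psi]_{C^\gamma(B_{10R}')}\le 288n\theta$. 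Putting $\varphi_2:=\varphi|_{B_R'}$ and using $B_R\subset B_\rho$ gives $U_2\cap B_R=\{(x^1,x')\in B_R:\ x^1<\varphi_2(x')\}$ with $\|D_{x'}\varphi_2\|_{L^\infty(B_R')}\le 8\sqrt n\,\tau_\ast\le\tau$ and $[D_{x'}\varphi_2]_{C^\gamma(B_R')}\le 288n\theta$; together with the elementary bounds on $\varphi_1$ this is the asserted conclusion. Finally $R_4$ is taken to be the minimum of $R_1$, $R_3$ and the finitely many further thresholds (all depending only on $n,\gamma,\theta,\tau$) used above, in particular $10R_4\le R_1(n,\gamma,\theta,\tau_\ast)$, $10R_4\le R_2(n,\gamma,\theta,\delta)$, and $18n\theta R_4^\gamma$ and the various $CR_4^\gamma$ small enough.
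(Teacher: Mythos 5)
Your proof is correct and follows essentially the same route as the paper's: obtain the tilt estimate on $V_1$ via Reifenberg flatness (Lemmas \ref{GGG1000} and \ref{CTS6000}), then invoke Lemma \ref{EEE1000} to re-express $U_2$ as a subgraph in the original $x$-coordinates. The main cosmetic difference is that the paper chooses the specific boundary points $P=(\varphi_1(0'),0')\in\partial U_1$ and $Q=\psi(0')V_1\in\partial U_2$, whose classical outward normals are exactly $-e_1$ and $-V_1$, and feeds those directly into Lemma \ref{CTS6000} (noting that for $C^{1,\gamma}$-domains the Reifenberg normals can be taken to coincide with the classical ones), thereby avoiding your intermediate $CR_4^{\gamma}$ error terms comparing Reifenberg and classical normals; your extra radius bookkeeping (applying Lemma \ref{CTS8000} at scale $10R$, sharpening $|\psi(0')|<(1+\tau_\ast)R<5R/4$, and running Lemma \ref{EEE1000} at scale $5R/4$ before restricting) is a clean and fully justified way of ensuring the hypothesis $|\psi(0')|<\rho$, which the paper secures more tersely by observing that the minimizer in the proof of Lemma \ref{BBB6000} already lies in $B_R$ whenever $\partial U_2\cap B_R\neq\emptyset$.
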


\begin{proof}
Since $U_{1}$ and $U_{2}$ are $(C^{1,\gamma},80R,\theta)$-domain, by Lemma \ref{GGG1000} there exists a constant  $R_{*} = R_{*}(n,\gamma,\theta,\tau) \in (0,R_{3}]$ such that 
\begin{equation}\label{HHH2100}
R \in (0,R_{*}]
\quad \Longrightarrow \quad
U_{1}, U_{2} \text{ are } ( \min \{ \delta_{1}, [\tau/(8n)]^{4} \}, 10R)\text{-Reifenberg flat.}
\end{equation}
By \eqref{HHH2100} and Lemma \ref{CTS6000},
\begin{equation}\label{HHH2200}
P \in \partial U_{1}, 
\quad
Q \in \partial U_{2} 
\quad \text{ and } \quad
|P-Q| < 2R
\quad \Longrightarrow \quad
|\vec{n}_{P} + \vec{n}_{Q}| \leq \tau/(8n),
\end{equation}
where $\vec{n}_{P}$ and $\vec{n}_{Q}$ are the normal vectors at $P \in \partial U_{1}$ and $Q \in \partial U_{2}$. We remark that for $(C^{1,\gamma},R,\theta)$-class domains, the normals do not depend on the size and we use the notation $\vec{n}_{P}$ and $\vec{n}_{Q}$ instead of $\vec{n}_{P,5r}$ and $\vec{n}_{Q,5r}$.

\medskip

By the assumption of the lemma
\begin{equation*}
|\varphi_{1}(0')| < R,
~
D_{x'}\varphi_{1}(0')=0',
~
\| D_{x'}\varphi_{1} \|_{L^{\infty}(B_{R}')} 
\leq 2 \sqrt{n}
~ \text{and} ~
[D_{x'}\varphi_{1}]_{C^{\gamma}(B_{R}')} 
\leq 18 n \theta.
\end{equation*}
So there exists $R_{**} = R_{**}(n,\gamma,\theta,\tau) \in (0,R_{*}]$ such that
\begin{equation}\label{HHH3600}
\| D_{x'} \varphi_{1} \|_{L^{\infty}(B_{R}')} \leq \tau.
\end{equation}
From \eqref{HHH1300} and the fact that $D_{x'}\varphi_{1}(0')=0'$, we find that
\begin{equation}\label{HHH3700}
-e_{1} \text{ is the normal vector of } U_{1} \text{ at } 
(\varphi_{1}(0'),0') \in \partial U_{1} \cap B_{R}.
\end{equation}

\medskip

With the fact that $U_{2}$ is a $(C^{1,\gamma},80R,\theta)$-domain and $\partial U_{2} \cap B_{R} \not = \emptyset$, we apply Lemma \ref{CTS8000} to $U_{2}$. Then there exists $R_{***} = R_{***}(n,\gamma,\theta) \in (0,R_{3}]$ such that if $R \in (0,R_{***}]$, then there exist an orthonormal matrix $V \in \mathbb{R}^{n \times n}$ with $\det V>0$ and $C^{1,\gamma}$-function $\psi : B_{R}' \to \mathbb{R}$ such that 
\begin{equation}\label{HHH4200}
U_{2} \cap B_{8R} = \left \{ \sum\limits_{1 \leq k \leq n} y^{k} V_{k} \in B_{8R} : y^{1} > \psi(y') \right\}
\qquad \text{and} \qquad
|\psi(0')| < R,
\end{equation}
with the estimate
\begin{equation}\label{HHH4300}
D_{y'}\psi(0')=0'
\qquad \text{and} \qquad
[D_{y'}\psi]_{C^{\gamma}(B_{8R}')} \leq 18n\theta.
\end{equation}
So by \eqref{HHH4300}, there exists $R_{4} = R_{4}(n,\gamma,\theta,\tau) \in (0, \min \{ R_{*},R_{**},R_{***} \}]$ such that $R \in (0,R_{4}]$ implies that $ \| D_{y'}\psi \|_{L^{\infty}(B_{R}')} \leq \tau / (8n)$. Thus
\begin{equation}\label{HHH4500}
D_{y'}\psi(0')=0',
\quad
\| D_{y'}\psi \|_{L^{\infty}(B_{8R}')} \leq \tau / (8n)
\quad \text{and} \quad
[D_{y'}\psi]_{C^{\gamma}(B_{8R}')} \leq 18 n\theta.
\end{equation}
By \eqref{HHH4200} and \eqref{HHH4500}, 
\begin{equation}\label{HHH4600}
-V_{1} \text{ is the normal vector of } U_{2} \text{ at } \psi(0') V_{1} \in \partial U_{2} \cap B_{R}.
\end{equation}

\medskip

By recalling from \eqref{HHH3700} and \eqref{HHH4600} that $-e_{1}$ is the normal vector of $U_{1}$ at $
(\varphi_{1}(0'),0') \in \partial U_{1} \cap B_{R}$ and $-V_{1}$ is the normal vector of $U_{2}$ at $\psi(0') V_{1} \in \partial U_{2} \cap B_{R}$, and it follows from \eqref{HHH2200} that
\begin{equation}\label{HHH5100}
|e_{1} + V_{1}| \leq \tau/(8n)
\qquad \text{and} \qquad
\tau/(8n) \leq 1/(8n).
\end{equation}
By comparing \eqref{HHH5100}, \eqref{HHH4200} and \eqref{HHH4500} with \eqref{EEE1100}, \eqref{EEE1300}, \eqref{EEE1400} and \eqref{EEE1800}, we apply Lemma \ref{EEE1000} to $\psi$ for $\tau/(8n)$ instead of $\tau$. 
Then there exists $C^{1,\gamma}$-function $\varphi_{2} : B_{R}' \to \mathbb{R}$ such that 
\begin{equation}\label{HHH5400}
U_{2} \cap B_{R} 
= \{ (x^{1},x') \in B_{R} : x^{1} < \varphi_{2} (x') \},
\end{equation}
with the estimate
\begin{equation}\label{HHH5500}
\| D_{x'} \varphi_{2} \|_{L^{\infty}(B_{R}')} 
\leq \tau
\quad \text{ and } \quad
[D_{x'}\varphi_{2}]_{C^{\gamma}(B_{R}')} 
\leq 16 [D_{y'}\psi]_{C^{\gamma}(B_{8R}')}
\leq 288n \theta.
\end{equation}
So the lemma follows from  \eqref{HHH3600}, \eqref{HHH5400} and \eqref{HHH5500}.
\end{proof}

We considered only two disjoint 
$(C^{1,\gamma},R,\theta)$-domains in Lemma \ref{HHH1000}. To handle the general case which appears in composite materials, we use Definition \ref{intro_composite domain} which already mentioned in the introduction.

\introcompositedomain*

\medskip

The following Lemma \ref{HHH7000} is a simple application of Lemma \ref{CTS7000} to composite $(C^{1,\gamma},80R,\theta)$-domains.

\begin{lem}\label{HHH7000}
For any  $R \in (0,R_{3}]$ with $R_{3}(n,\gamma,\theta) \in (0,1]$ in Lemma \ref{GGG1000}, suppose that $U$ is composite $(C^{1,\gamma},80R,\theta)$-domain with subdomains $\{  U_{1}, \cdots, U_{K} \}$ and
\begin{equation*}
U_{k} \cap B_{R} \not = \emptyset,
\qquad
U_{l} \cap B_{R} \not = \emptyset
\qquad \text{and} \qquad
U_{m} \cap B_{R} \not = \emptyset
\qquad (k,l,m \in \{ 0,\cdots, K\} ).
\end{equation*}
If $U_{k} \cap U_{l} = \emptyset$ and $U_{k} \cap U_{m} = \emptyset$ then $U_{l} \cap U_{m} \not = \emptyset$.
\end{lem}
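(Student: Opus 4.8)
The plan is to argue by contradiction and reduce everything to Lemma \ref{CTS7000}. Suppose, toward a contradiction, that $U_{l} \cap U_{m} = \emptyset$. Together with the hypotheses $U_{k} \cap U_{l} = \emptyset$ and $U_{k} \cap U_{m} = \emptyset$, this makes $U_{k}$, $U_{l}$, $U_{m}$ pairwise disjoint. First I would dispose of coincidences among the indices. Since $U_{k} \cap B_{R}$, $U_{l} \cap B_{R}$, $U_{m} \cap B_{R}$ are all nonempty, each of $U_{k}$, $U_{l}$, $U_{m}$ is nonempty; if $k=l$ then $U_{k}\cap U_{l}=U_{k}\neq\emptyset$, contradicting disjointness, and likewise $k\neq m$, while if $l=m$ the desired conclusion $U_{l}\cap U_{m}=U_{l}\supseteq U_{l}\cap B_{R}\neq\emptyset$ already holds. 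Hence we may assume $k,l,m$ are three distinct indices, so $U_{k}$, $U_{l}$, $U_{m}$ are three mutually disjoint nonempty sets.

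Next I would upgrade the $C^{1,\gamma}$ structure to Reifenberg flatness at the scale required by Lemma \ref{CTS7000}. By Definition \ref{intro_composite domain}(a) (with the convention $U_{0}:=U$), each of $U_{k}$, $U_{l}$, $U_{m}$ is a $(C^{1,\gamma},80R,\theta)$-domain, that is, a $(C^{1,\gamma},8\cdot(10R),\theta)$-domain. Since $R\in(0,R_{3}]$ and $R_{3}(n,\gamma,\theta)=R_{2}(n,\gamma,\theta,\delta_{1})/10$, we have $10R\in(0,R_{2}(n,\gamma,\theta,\delta_{1})]$, so Lemma \ref{GGG1000} applied with $\delta=\delta_{1}$ and radius $10R$ shows that $U_{k}$, $U_{l}$, $U_{m}$ are each $(\delta_{1},10R)$-Reifenberg flat domains.

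Finally I would invoke Lemma \ref{CTS7000} with $\delta=\delta_{1}$: since $U_{k}$, $U_{l}$, $U_{m}$ are mutually disjoint nonempty $(\delta_{1},10R)$-Reifenberg flat domains, there is some $j\in\{k,l,m\}$ with $U_{j}\cap B_{R}=\emptyset$. This contradicts the standing hypothesis that all three of $U_{k}\cap B_{R}$, $U_{l}\cap B_{R}$, $U_{m}\cap B_{R}$ are nonempty. Therefore the assumption $U_{l}\cap U_{m}=\emptyset$ is impossible, and $U_{l}\cap U_{m}\neq\emptyset$.

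I do not expect a genuine obstacle here; this lemma is essentially a translation of Lemma \ref{CTS7000} into the $C^{1,\gamma}$-setting. The only point requiring care is the bookkeeping of radii: one must check that the hypothesis ``$U$ is a composite $(C^{1,\gamma},80R,\theta)$-domain'' provides precisely the $(C^{1,\gamma},8\cdot 10R,\theta)$ input that Lemma \ref{GGG1000} needs to output $(\delta_{1},10R)$-Reifenberg flatness, which is in turn exactly the hypothesis consumed by Lemma \ref{CTS7000}. That is why the threshold $R_{3}=R_{2}(n,\gamma,\theta,\delta_{1})/10$ was defined in Lemma \ref{GGG1000}.
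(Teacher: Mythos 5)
Your proposal is correct and follows essentially the same route as the paper: upgrade each of $U_{k},U_{l},U_{m}$ to a $(\delta_{1},10R)$-Reifenberg flat domain via Lemma \ref{GGG1000} (using $10R\le R_{2}(n,\gamma,\theta,\delta_{1})$ since $R\le R_{3}=R_{2}/10$), then invoke Lemma \ref{CTS7000} to contradict the nonemptiness of all three intersections with $B_{R}$. Your explicit handling of the degenerate cases where $k,l,m$ coincide is a small extra care the paper leaves implicit, but the substance of the argument is identical.
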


\begin{proof}
Since $10 R \in (0,10 R_{3}] \subset (0,R_{2}]$ for $R_{2}(n,\gamma,\theta,\delta_{1})$ in Lemma \ref{GGG1000}, $U$ is composite $(C^{1,\gamma},80R,\theta)$-domain with subdomains $\{ U_{1}, \cdots, U_{K} \}$. So by Lemma \ref{GGG1000}, $\{ U_{0} : =U, U_{1}, \cdots, U_{K} \}$ are $(\delta_{1},10R)$-Reifenberg domains. Assume that $U_{l} \cap U_{m} = \emptyset$. Since $U_{k} \cap U_{l} = \emptyset$ and $U_{k} \cap U_{m} = \emptyset$,  Lemma \ref{CTS7000} gives that
\begin{equation*}
U_{k} \cap B_{R} = \emptyset
\quad \text{or} \quad
U_{l} \cap B_{R} = \emptyset
\quad \text{or} \quad
U_{m} \cap B_{R} = \emptyset,
\end{equation*}
which contradicts the assumption of the lemma. So we find that $U_{l} \cap U_{m} \not = \emptyset$.
\end{proof}

$W_{j}$ in \eqref{intro_W_{j}} represents an individual component and $\bigcup\limits_{U_{i} \in S, U_{i} \subsetneq U_{j}} U_{i}$ represents the union of the components inside  $W_{j}$. In fact, we have the following lemma holds.

\begin{lem}\label{JJJ1000}
Assume that $U$ is composite $(C^{1,\gamma},R,\theta)$-domain with subdomains $\{ U_{1}, \cdots, U_{K} \}$. Set $S = \{ U_{0}:=U, U_{1}, \cdots, U_{K} \}$ and
\begin{equation}\label{JJJ1100}
W_{j} = U_{j} \setminus \left( \bigcup_{U_{i} \in S, U_{i} \subsetneq U_{j} } U_{i} \right)
\qquad (j \in \{ 0, \cdots, K \} ).
\end{equation}
Then $\{ W_{0}, \cdots, W_{K} \}$ are mutually disjoint and 
\begin{equation}\label{JJJ1200}
U = W_{0} \sqcup W_{1} \sqcup \cdots \sqcup W_{K}.
\end{equation}
\end{lem}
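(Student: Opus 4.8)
The plan is to first establish that the $W_j$ are pairwise disjoint, and then to prove the covering identity \eqref{JJJ1200}. For the disjointness, I would argue by contradiction: suppose $x \in W_j \cap W_k$ for some $j \neq k$. Then in particular $x \in U_j$ and $x \in U_k$, so $U_j \cap U_k \neq \emptyset$, and by condition (b) of Definition \ref{intro_composite domain} (the trichotomy \eqref{HHH6500}) we must have $U_j \subsetneq U_k$ or $U_k \subsetneq U_j$. Say $U_j \subsetneq U_k$. But then $U_j$ is one of the sets appearing in the union $\bigcup_{U_i \in S,\, U_i \subsetneq U_k} U_i$ that is removed in the definition of $W_k$, so $x \in U_j$ forces $x \notin W_k$, a contradiction. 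The case $U_k \subsetneq U_j$ is symmetric.

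For the covering identity, the inclusion $W_0 \sqcup \cdots \sqcup W_K \subset U$ is immediate since each $W_j \subset U_j \subset U_0 = U$ (using that for $j \ge 1$, $U_j \cap U \neq \emptyset$ together with the trichotomy rules out $U \subsetneq U_j$ and $U_j \cap U = \emptyset$, leaving $U_j \subsetneq U = U_0$; alternatively this containment is part of the hypothesis that $\{U_1,\dots,U_K\}$ are subdomains of $U$). For the reverse inclusion, fix $x \in U$ and consider the collection $\mathcal{S}_x = \{ U_i \in S : x \in U_i \}$, which is nonempty since $U_0 \in \mathcal{S}_x$. By the trichotomy \eqref{HHH6500}, any two members of $\mathcal{S}_x$ are nested (option (3) is excluded because they share the point $x$), so $\mathcal{S}_x$ is totally ordered by strict inclusion; being finite, it has a minimal element $U_m$. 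I then claim $x \in W_m$: indeed $x \in U_m$, and if $x$ belonged to some $U_i \in S$ with $U_i \subsetneq U_m$, then $U_i \in \mathcal{S}_x$ would strictly contradict the minimality of $U_m$. Hence $x \notin \bigcup_{U_i \in S,\, U_i \subsetneq U_m} U_i$, so $x \in W_m$, which gives $U \subset W_0 \cup \cdots \cup W_K$. Combined with the disjointness established above, this yields the disjoint-union statement \eqref{JJJ1200}.

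The argument is essentially bookkeeping with the trichotomy axiom, so there is no serious analytic obstacle; the one point requiring a little care is the extraction of a minimal element of $\mathcal{S}_x$ — this is where finiteness of $S$ and the total-orderedness forced by \eqref{HHH6500} are both used, and it is the heart of why "the smallest subdomain containing $x$" is well defined. Everything else (disjointness, the easy inclusion) follows directly from the definitions.
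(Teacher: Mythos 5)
Your proposal is correct and follows essentially the same route as the paper: the disjointness argument is the identical trichotomy contradiction, and your "minimal element of $\mathcal{S}_x$" is precisely the paper's intersection $W = \bigcap_{U_i \in S,\, y \in U_i} U_i$ (which equals the minimal element because the family is finite and totally ordered by the trichotomy).
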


\begin{proof}

[Step 1] We prove that $\{ W_{0}, \cdots, W_{K} \}$ are mutually disjoint. Suppose not. Then there exists $W_{k}$ and $W_{j}$ such that 
\begin{equation}\label{JJJ2200}
W_{k} \cap W_{j} \not = \emptyset,
\end{equation}
which implies that $ U_{k} \cap U_{j} \not = \emptyset $. So by \eqref{HHH6500}, $ U_{k} \subsetneq U_{j} $ or $ U_{j} \subsetneq U_{k} $ holds. Without loss of generality, we assume that $ U_{j} \subsetneq U_{k} $. Then from the definition of $ W_{k} = U_{k} \setminus \left( \bigcup_{U_{i} \in S, ~ U_{i} \subsetneq U_{k} } U_{i} \right) $, we have that $W_{k} \cap U_{j} = \emptyset$, which contradicts \eqref{JJJ2200}. So we find that $\{ W_{0}, \cdots, W_{K} \}$ are mutually disjoint.

\medskip

[Step 2] We prove that $U = W_{0} \cup \cdots \cup W_{K}$ in \eqref{JJJ1200}. Fix $ y \in U = U_{0}$, and let
\begin{equation}\label{JJJ3100}
W = \bigcap_{ U_{i} \in S , ~ y \in U_{i}}  U_{i}.
\end{equation}
From \eqref{HHH6500}, if $U_{i} \owns y$, $U_{j} \owns y $ and $U_{i},U_{j} \in S$ then $U_{i} \subsetneq U_{j}$ or $U_{j} \subsetneq U_{i}$ holds. Thus
\begin{equation*}
U_{i} \owns y, ~ U_{j} \owns y 
\quad \text{and} \quad 
U_{i},U_{j} \in S
\quad \Longrightarrow \quad
U_{i} \cap U_{j} = U_{i}
\quad \text{or} \quad
U_{i} \cap U_{j} = U_{j}.
\end{equation*}
So by the fact that the number of elements in $S$ is finite, one can easily prove that
\begin{equation}\label{JJJ3200}
W = U_{k} \in S
\quad \text{for some } k \in \{ 0,\cdots, K \}.
\end{equation}
With the chosen $U_{k}$ in \eqref{JJJ3200}, we claim that 
\begin{equation}\label{JJJ3300}
y \in W_{k}.
\end{equation}
Suppose not. Then $y \not \in W_{k}$. Since $U_{k} = W \owns y$ and $W_{k} = U_{k} \setminus \left( \bigcup_{U_{i} \subsetneq U_{k}, ~ U_{i} \in S } U_{i} \right)$, there exists $j \in \{ 0,\cdots, K \}$ such that $U_{j} \subsetneq U_{k}$ and $y \in U_{j} \in S$. Then \eqref{JJJ3200} and that $U_{j} \subsetneq U_{k}$ give that $U_{j} \supsetneq U_{k} = W$. On the other-hand, from the definition of $W$ in \eqref{JJJ3100} and the fact that $y \in U_{j} \in S$, we have that $W \subset U_{j}$, and a contradiction occurs. So the claim \eqref{JJJ3300} holds. Since $y \in U$ was arbitrary chosen, for any $y \in U$ there exists $W_{k} \in \{ W_{0}, \cdots, W_{K} \}$ with $W_{k} \owns y$. This proves \eqref{JJJ1200}.
\end{proof}
Let $U$ be composite $(C^{1,\gamma},8R,\theta)$-domain with subdomains $\{ U_{1}, \cdots, U_{K} \}$ and $S = \{ U_{0}:= U, U_{1}, \cdots, U_{K}, U_{K+1} : = \emptyset \}$. In Lemma \ref{KKK3000} and Lemma \ref{KKK5000}, we decompose $U_{j} \in S$ with $W_{j}$ in \eqref{JJJ1100} and the elements in $S$. For this decomposition,  we use Lemma \ref{KKK3000} when $U_{j} \not \supset U \cap B_{R}$ and Lemma \ref{KKK5000} when $U_{j} \supset U \cap B_{R}$. We remark that the center of the ball in Lemma \ref{KKK3000} and Lemma \ref{KKK5000} are chosen as the origin but it can be chosen as any point in $U$ by using the translation.

\begin{lem}\label{KKK3000}
For any $R \in (0,R_{3}]$ with $R_{3}(n,\gamma,\theta)$ in Lemma \ref{GGG1000}, suppose that $U$ is composite $(C^{1,\gamma},80R,\theta)$-domain with subdomains $\{ U_{1}, \cdots, U_{K} \}$. Also for $S = \{ U_{0}:= U,  U_{1}, \cdots, U_{K}, U_{K+1} : = \emptyset \}$, suppose that 
\begin{equation*}
U_{j} \cap B_{R} \not = \emptyset
\quad \text{and} \quad
U_{j} \not \supset U \cap B_{R}
\quad \text{for some} \quad U_{j} \in S. 
\end{equation*}
Set
\begin{equation}\label{KKK3050}
U_{k} = \bigcup_{ U_{i} \in S, ~ U_{i} \subsetneq U_{j}, ~ U_{i} \cap B_{R} \not = \emptyset } U_{i}.
\end{equation} 
Then 
\begin{equation}\label{KKK3100}
U_{k} \in S
\qquad \text{and} \qquad
U_{k} \subsetneq U_{j}.
\end{equation}
In addition, for 
\begin{equation}\label{KKK3300}
W_{j} = U_{j} \setminus \left( \bigcup_{U_{i} \in S, ~ U_{i} \subsetneq U_{j} } U_{i} \right)
\qquad (j \in \{ 0,\cdots, K \} ),
\end{equation}
we have
\begin{equation}\label{KKK3400}
U_{j} \cap B_{R} = (W_{j} \sqcup U_{k}) \cap B_{R}.
\end{equation}
\end{lem}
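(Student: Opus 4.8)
The plan is to analyze the family $\mathcal{F} = \{ U_i \in S : U_i \subsetneq U_j,\ U_i \cap B_R \neq \emptyset \}$ and show it has a greatest element under inclusion, which will be the set $U_k$ defined in \eqref{KKK3050}. First I would observe that $\mathcal{F}$ is nonempty: since $U_j \not\supset U \cap B_R$ by hypothesis, there is a point $x \in (U \cap B_R) \setminus U_j$; but $U_j \cap B_R \neq \emptyset$ forces $\partial U_j \cap B_R \neq \emptyset$ (using that $B_R$ is connected and $U \cap B_R$ is not contained in $U_j$), and since $\mathbf{0}$ or any point of $U_j \cap B_R$ lies in some $W_m$ with $U_m \in S$ and $U_m \subseteq U_j$, one checks $U_m \neq U_j$ is impossible only if $U_j = W_j$, in which case \eqref{KKK3050} gives $U_k = \emptyset = U_{K+1} \in S$ and \eqref{KKK3400} reduces to $U_j \cap B_R = W_j \cap B_R$, which is immediate. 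So in the main case $\mathcal{F} \neq \emptyset$.

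Next I would prove that $\mathcal{F}$ is totally ordered by inclusion. This is the key step and the main obstacle. Take $U_p, U_q \in \mathcal{F}$. By \eqref{HHH6500} (the trichotomy in Definition \ref{intro_composite domain}), either $U_p \subsetneq U_q$, or $U_q \subsetneq U_p$, or $U_p \cap U_q = \emptyset$. The first two cases are fine, so I must rule out $U_p \cap U_q = \emptyset$. Suppose it holds. Then $U_p, U_q$ are disjoint $(C^{1,\gamma},80R,\theta)$-domains with $U_p \cap B_R \neq \emptyset$ and $U_q \cap B_R \neq \emptyset$. Since $U_p \subsetneq U_j$ and $U_q \subsetneq U_j$, the complement-type domain $\mathbb{R}^n \setminus \overline{U_j}$ is disjoint from both $U_p$ and $U_q$; applying Lemma \ref{HHH7000} (with $U_k := \mathbb{R}^n \setminus \overline{U_j}$, provided it meets $B_R$, which holds precisely because $U_j \not\supset U\cap B_R$) forces $U_p \cap U_q \neq \emptyset$, a contradiction. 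If $\mathbb{R}^n \setminus \overline{U_j}$ does not meet $B_R$, then $B_R \subset \overline{U_j}$, so $U \cap B_R \subset \overline{U_j}$, and combined with $U_j$ being an open $(C^{1,\gamma},\cdot)$-domain one deduces $U \cap B_R \subset U_j$ or handles the boundary sliver directly — but this case is excluded by the hypothesis $U_j \not\supset U\cap B_R$ after noting $\partial U_j \cap B_R$ has measure zero. Hence $\mathcal{F}$ is totally ordered, and since $S$ is finite, $\mathcal{F}$ has a greatest element $U_{i_0} \in S$; then $U_k = \bigcup_{U_i \in \mathcal{F}} U_i = U_{i_0} \in S$, and $U_k \subsetneq U_j$ because each element of $\mathcal{F}$ is strictly contained in $U_j$. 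This gives \eqref{KKK3100}.

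Finally I would prove \eqref{KKK3400}. The inclusion $(W_j \sqcup U_k) \cap B_R \subset U_j \cap B_R$ is clear since $W_j \subset U_j$ and $U_k \subset U_j$, and $W_j \cap U_k = \emptyset$ by the definition \eqref{KKK3300} of $W_j$ (which subtracts every $U_i \subsetneq U_j$, in particular every $U_i \in \mathcal{F}$). For the reverse inclusion, take $x \in U_j \cap B_R$. If $x \in W_j$ we are done. Otherwise $x \in U_i$ for some $U_i \in S$ with $U_i \subsetneq U_j$; since $x \in B_R$ we have $U_i \cap B_R \neq \emptyset$, so $U_i \in \mathcal{F}$, hence $U_i \subset U_k$ and $x \in U_k \cap B_R$. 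This proves $U_j \cap B_R \subset (W_j \cup U_k) \cap B_R$, and together with disjointness we get \eqref{KKK3400}. The routine verifications I would expect to expand are: the measure-zero / connectedness argument showing $\partial U_j \cap B_R \neq \emptyset$ and that $\mathbb{R}^n \setminus \overline{U_j}$ meets $B_R$ under the stated hypothesis, and checking the Reifenberg-flatness radius bookkeeping so that Lemma \ref{HHH7000} applies with the complement domain.
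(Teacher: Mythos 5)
Your proposal is correct and follows essentially the same route as the paper: handle the trivial case where no $U_i \subsetneq U_j$ meets $B_R$ by taking $U_k = \emptyset$, and otherwise apply Lemma \ref{HHH7000} to pairs $U_\alpha, U_\beta$ together with $\mathbb{R}^n \setminus \overline{U_j}$ (which meets $B_R$ because $U_j \not\supset U \cap B_R$) to show the family is nested, so its union lies in $S$; the partition \eqref{KKK3400} then follows by the same direct set-theoretic computation. The only cosmetic difference is that you phrase the nestedness as producing a greatest element of a totally ordered family, while the paper runs an explicit finite induction on the union; the opening paragraph's attempt to show $\mathcal{F} \neq \emptyset$ is a bit muddled but harmless, since you correctly fall back to the trivial case when it is empty.
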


\begin{proof}
If $\{ U_{i} \in S :  U_{i} \subsetneq U_{j} \text{ and }   U_{i} \cap B_{R} \not = \emptyset \} = \emptyset$, then the lemma holds by taking $U_{k} = \emptyset$. So we assume that
\begin{equation*}
\{ U_{i} \in S :  U_{i} \subsetneq U_{j} \text{ and }  U_{i} \cap B_{R} \not = \emptyset \} \not = \emptyset.
\end{equation*}

\medskip
We first prove \eqref{KKK3100}. Since $U_{j} \in S$ and $U_{j} \not \supset  U \cap B_{R}$, we have
\begin{equation}\label{KKK3600}
B_{R} \cap (\mathbb{R}^{n} \setminus \overline{U_{j}}) \not = \emptyset
\quad \text{and} \quad
\mathbb{R}^{n} \setminus \overline{U_{j}} 
\text{ is a } (C^{1,\gamma},80R,\theta)\text{-domain.}
\end{equation}
We claim that
\begin{equation}\label{KKK3500}\begin{aligned}
& U_{\alpha},U_{\beta} \in S,
\quad 
U_{\alpha}, U_{\beta} \subsetneq U_{j},
\quad
U_{\alpha} \cap B_{R} \not = \emptyset 
\quad \text{and} \quad
U_{\beta} \cap B_{R} \not  = \emptyset.\\
&\qquad \Longrightarrow \qquad U_{\alpha} \cup U_{\beta} \in S 
\quad \text{and} \quad
U_{\alpha} \cup U_{\beta} \subsetneq U_{j}.
\end{aligned}\end{equation}
To prove this claim, fix $U_{\alpha},U_{\beta} \in S$ with 
\begin{equation}\label{KKK3700}
U_{\alpha},U_{\beta} \in S,
\quad 
U_{\alpha}, U_{\beta} \subsetneq U_{j},
\quad
U_{\alpha} \cap B_{R} \not  = \emptyset
\quad \text{and} \quad
U_{\beta} \cap B_{R} \not  = \emptyset.
\end{equation}
Then we have that
\begin{equation}\label{KKK3800}
U_{\alpha} \cap (\mathbb{R}^{n} \setminus \overline{U_{j}}) = \emptyset
\quad \text{and} \quad
U_{\beta} \cap (\mathbb{R}^{n} \setminus \overline{U_{j}}) = \emptyset.
\end{equation}
With \eqref{KKK3600}, \eqref{KKK3700} and \eqref{KKK3800}, we apply Lemma \ref{HHH7000} to $U_{\alpha}$, $U_{\beta}$ and $\mathbb{R}^{n} \setminus \overline{U_{j}}$. Then we have that $U_{\alpha} \cap U_{\beta} \not = \emptyset$, and so \eqref{HHH6500} yields that
\begin{equation*}
U_{\alpha} \subset U_{\beta}
\quad \text{or} \quad
U_{\beta} \subset U_{\alpha}.
\end{equation*}
It follows from \eqref{KKK3700} that
\begin{equation}\label{KKK3900}
U_{\alpha} \cup U_{\beta} \in S 
\quad \text{and} \quad
U_{\alpha} \cup U_{\beta} \subsetneq U_{j}.
\end{equation}
So under the assumption \eqref{KKK3700}, we have \eqref{KKK3900}. This proves the claim \eqref{KKK3500}. 

Since the number of the elements of $S$ is finite, by an induction using \eqref{KKK3500}, one can prove that 
\begin{equation}\label{KKK4000}
U_{k} \in S
\qquad \text{and} \qquad
U_{k} \subsetneq U_{j},
\end{equation}
which proves \eqref{KKK3100}. So it only remains to prove \eqref{KKK3400}.

By the choice of $U_{k}$ in \eqref{KKK3050}, we have that
\begin{equation}\label{KKK4100}
U_{k} \cap B_{R}
= \bigcup_{ U_{i} \in S, ~  U_{i} \subsetneq U_{j}, ~ U_{i} \cap B_{R} \not = \emptyset } [U_{i} \cap B_{R}]
= \bigcup_{U_{i} \in S , U_{i} \subsetneq  U_{j}  } [U_{i} \cap B_{R}].
\end{equation}
So we find from \eqref{KKK3300} and \eqref{KKK4100} that
\begin{equation}\label{KKK4400}
W_{j} \cap B_{R} 
= [U_{j} \cap B_{R}] \setminus \left( \bigcup_{U_{i} \in S, ~ U_{i} \subsetneq U_{j} } [U_{i} \cap B_{R}] \right)
= [U_{j} \cap B_{R}] \setminus [U_{k} \cap B_{R}].
\end{equation}
In view of \eqref{KKK4000}, we obtain that $ U_{k} \cap B_{R} \subset U_{j} \cap B_{R} $. So it follows from \eqref{KKK4400} that
\begin{equation}\label{KKK4500}
U_{j} \cap B_{R} 
= [W_{j} \cap B_{R}] \cup [U_{k} \cap B_{R}].
\end{equation}
On the other-hand, by \eqref{KKK4000} and the definition of $W_{j}$ in \eqref{KKK3300}, we have that
\begin{equation}\label{KKK4700}
W_{j} \cap U_{k} = \emptyset.
\end{equation}
So \eqref{KKK3400} holds from \eqref{KKK4500} and \eqref{KKK4700}.
\end{proof}

\begin{lem}\label{KKK5000}
For any $R \in (0,R_{3}]$ with $R_{3}(n,\gamma,\theta)$ in Lemma \ref{GGG1000},
let $U$ be composite $(C^{1,\gamma},80R,\theta)$-domain with subdomains $\{ U_{1}, \cdots, U_{K} \}$ and $U \cap B_{R} \not = \emptyset$. Set $S = \{ U_{0} : = U , U_{1}, \cdots, U_{K}, U_{K+1} : = \emptyset \}$ and
\begin{equation}\label{KKK5200}
U_{j} = \bigcap_{U_{i} \in S, ~ U_{i} \cap B_{R} \supset U \cap B_{R} } U_{i}.
\end{equation}
Then
\begin{equation}\label{KKK5300}
U_{j} \in S
\qquad \text{and} \qquad
U_{j} \cap B_{R} = U \cap B_{R}.
\end{equation}
In addition, for 
\begin{equation}\label{KKK5400}
W_{j} = U_{j} \setminus \left( \bigcup_{U_{i} \in S, ~ U_{i} \subsetneq U_{j} } U_{i} \right)
\qquad (j \in \{ 0,\cdots, K \} ),
\end{equation}
there exist $U_{k}, U_{l} \in S$ such that
\begin{equation}\label{KKK5600}
U_{j} \cap B_{R} 
= (W_{j} \sqcup U_{k} \sqcup U_{l}) \cap B_{R},
\end{equation} 
\begin{equation}\label{KKK5500}
U_{k},U_{l} \subsetneq U_{j}
\qquad \text{and} \qquad
U_{k} \cap B_{R}, U_{l} \cap B_{R} 
\subsetneq 
U_{j} \cap B_{R}.
\end{equation}
\end{lem}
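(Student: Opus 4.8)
The plan is to follow the same scheme as the proof of Lemma~\ref{KKK3000}, with two differences: since here $U_j\cap B_R=U\cap B_R$, the set $U_j$ will be identified as the \emph{smallest} element of $S$ whose trace on $B_R$ still equals $U\cap B_R$, and the bookkeeping of "large" sub-components of $U_j$ meeting $B_R$ will produce \emph{two} of them rather than one, by invoking Lemma~\ref{HHH7000}. First I would prove \eqref{KKK5300}. Put $\mathcal{F}=\{U_i\in S: U_i\cap B_R\supseteq U\cap B_R\}$, which is non-empty because $U_0=U\in\mathcal{F}$. For $U_\alpha,U_\beta\in\mathcal{F}$ alternative (3) of \eqref{HHH6500} cannot occur, since $(U_\alpha\cap B_R)\cap(U_\beta\cap B_R)\supseteq U\cap B_R\neq\emptyset$; hence $\mathcal{F}$ is totally ordered by inclusion, and being finite and non-empty it has a least element, which then coincides with $\bigcap_{U_i\in\mathcal{F}}U_i=U_j$. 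Thus $U_j\in S$; moreover $U_j\subseteq U_0=U$ (the least element sits inside every member of $\mathcal{F}$), so $U_j\cap B_R\subseteq U\cap B_R$, while $U_j\in\mathcal{F}$ gives the reverse inclusion. This proves \eqref{KKK5300}. I would also record the consequence: for every $V\in S$ with $V\subsetneq U_j$ one has $V\cap B_R\subsetneq U_j\cap B_R$, because otherwise $V\cap B_R=U_j\cap B_R=U\cap B_R$ would put $V\in\mathcal{F}$ and force $U_j\subseteq V$, contradicting $V\subsetneq U_j$.

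Next I would analyse the sub-components of $U_j$. Let $V_1,\dots,V_p$ be the maximal elements of $\{U_i\in S:U_i\subsetneq U_j\}$ with respect to inclusion; by \eqref{HHH6500} together with maximality they are pairwise disjoint, and every $U_i\in S$ with $U_i\subsetneq U_j$ is contained in some $V_m$. Each $V_m$ that meets $B_R$ is non-empty and, being a proper sub-element of $U_j\subseteq U_0$, lies in $\{U_1,\dots,U_K\}$. The key step is to show that at most two of the $V_m$ meet $B_R$: if three distinct ones $V_a,V_b,V_c$ did, they would be three members of $\{U_1,\dots,U_K\}$ each meeting $B_R$ with $V_a\cap V_b=V_a\cap V_c=\emptyset$, so Lemma~\ref{HHH7000} (applicable since $U$ is a composite $(C^{1,\gamma},80R,\theta)$-domain and $R\in(0,R_3]$) would yield $V_b\cap V_c\neq\emptyset$, a contradiction. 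Relabelling, let $U_k,U_l$ be the $V_m$'s meeting $B_R$, padded by $U_{K+1}=\emptyset\in S$ if fewer than two occur. Then $U_k,U_l\subsetneq U_j$ (for the empty padding this uses $U_j\neq\emptyset$, which holds since $U_j\cap B_R=U\cap B_R\neq\emptyset$), and the consequence recorded above (or triviality when a term is empty) gives $U_k\cap B_R,\,U_l\cap B_R\subsetneq U_j\cap B_R$, which is \eqref{KKK5500}. Finally, since $U_j=W_j\cup\bigcup_{U_i\in S,\,U_i\subsetneq U_j}U_i$ and the sub-components with $U_i\cap B_R=\emptyset$ disappear after intersecting with $B_R$, while $\bigcup\{V_m:V_m\cap B_R\neq\emptyset\}$ captures exactly those $U_i\subsetneq U_j$ that do meet $B_R$, one gets $U_j\cap B_R=(W_j\cup U_k\cup U_l)\cap B_R$; mutual disjointness of $W_j,U_k,U_l$ follows because $U_k,U_l\subsetneq U_j$ makes them disjoint from $W_j$ by the definition \eqref{KKK5400}, and distinct $V_m$'s are disjoint. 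This is \eqref{KKK5600}.

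The main obstacle is the counting step in the middle paragraph: converting the qualitative fact "three mutually disjoint Reifenberg-flat sub-domains cannot all meet a small ball" into the statement that only two maximal proper sub-components of $U_j$ survive in $B_R$. This is precisely where Lemma~\ref{HHH7000} (hence ultimately Lemma~\ref{CTS7000} and the Reifenberg flatness of $C^{1,\gamma}$-domains from Lemma~\ref{GGG1000}) enters, and one must check that the $V_m$ are genuinely pairwise disjoint so that the hypotheses of Lemma~\ref{HHH7000} hold. No new smallness of $R$ is required beyond the standing hypothesis $R\le R_3$.
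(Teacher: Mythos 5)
Your proof is correct, and it takes a genuinely different route from the paper's, though both rest on the same key lemma. The paper proceeds by a three-way case split (no reference element, one reference $U_{\alpha}$, or two disjoint references $U_{\alpha},U_{\beta}$), defines $U_{k}$ (resp.\ $U_{l}$) as the union $\bigcup_{U_i \in S,\, U_i \subsetneq U_j,\, U_i\cap U_{\alpha}\neq\emptyset,\, U_i\cap B_R\neq\emptyset} U_i$ around the reference, must then prove via an inductive argument plus a further application of Lemma~\ref{HHH7000} (through~\eqref{KKK7370}) that this union is again an element of $S$, and finally must handle the possibility $U_k\cap U_l\neq\emptyset$ by reducing Case~(3) to Case~(2) via~\eqref{KKK9300}. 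You instead identify $U_k,U_l$ directly as the maximal elements of the poset $\{U_i\in S: U_i\subsetneq U_j\}$ that meet $B_R$, padded by $\emptyset$. Since maximal elements of a poset in which any two elements are nested or disjoint (by~\eqref{HHH6500}) are automatically pairwise disjoint and automatically members of $S$, you dispense with both the closure-under-union step and the case reduction; the single essential use of Lemma~\ref{HHH7000} is to cap the number of such maximal elements at two. The decomposition~\eqref{KKK5600} then follows because every $U_i\subsetneq U_j$ with $U_i\cap B_R\neq\emptyset$ lies under some maximal $V_m$ meeting $B_R$, hence under $U_k\cup U_l$, while the recorded consequence of~\eqref{KKK5300} gives the strict trace inclusion in~\eqref{KKK5500}. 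Your version is arguably cleaner and more structural; the paper's version produces the same sets (your maximal element coincides with the paper's union, as the union is over a totally ordered family containing $U_\alpha$) but at the cost of extra bookkeeping.
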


\begin{proof}

[Step 1 : Proof of \eqref{KKK5300}] For any $U_{\alpha}, U_{\beta} \in S$ with $U_{\alpha} \cap B_{R}, U_{\beta} \cap B_{R} \supset U \cap B_{R} \not = \emptyset$, \eqref{HHH6500} gives that $U_{\alpha} \subset U_{\beta}$ or $U_{\beta} \supset U_{\alpha}$, and so we find that $U_{\alpha} \cap U_{\beta} \in S$. Since the number of the elements in $S$ are finite, one can prove that
\begin{equation}\label{KKK6200}
U_{j} \in S
\end{equation}
by using an induction. Also with the definition of $U_{j}$ in \eqref{KKK5200} and the fact that the number of the elements in $S$ are finite, one can easily check that 
\begin{equation}\label{KKK6300}
U_{j} \cap B_{R} \supset U \cap B_{R}.
\end{equation}
Since $U_{j} \subset U$, \eqref{KKK5300} holds from \eqref{KKK6200} and \eqref{KKK6300}. 

\medskip

\noindent [Step 2 : Splitting into three cases] We only consider the following three cases :

\medskip

(1) for any $U_{i} \in S$ with $U_{i} \cap B_{R} \not = \emptyset$, $U_{i} \supset U \cap B_{R}$ holds.

\medskip

(2) there exists $U_{\alpha} \in S$ such that 
\begin{equation}\label{KKK6500}
U_{\alpha} \cap B_{R} \not = \emptyset
\quad \text{ and } \quad
U_{\alpha}\not \supset U \cap B_{R} 
\end{equation}
satisfying
\begin{equation}\label{KKK6700}
U_{i} \in S 
\quad \text{and} \quad
U_{i} \cap B_{R} \not = \emptyset
\quad \Longrightarrow \quad
U_{i} \cap U_{\alpha} \not  = \emptyset.
\end{equation}

\medskip

(3) there exists $U_{\alpha},U_{\beta} \in S$ such that 
\begin{equation}\label{KKK7000}
U_{\alpha} \cap U_{\beta} = \emptyset,
\quad
U_{\alpha},U_{\beta} \not \supset U \cap B_{R} ,
\quad
U_{\alpha} \cap B_{R}  \not = \emptyset
\quad \text{ and } \quad
U_{\beta} \cap B_{R} \not = \emptyset
\end{equation}
satisfying
\begin{equation}\label{KKK7100}
U_{i} \in S 
\quad \text{and} \quad
U_{i} \cap B_{R} \not = \emptyset
\quad \Longrightarrow \quad
U_{i} \cap U_{\alpha} \not  = \emptyset 
\qquad \text{or} \qquad
U_{i} \cap U_{\beta} \not  = \emptyset.
\end{equation}

\medskip

We explain about these three cases. Suppose that (1) does not holds. Then there exists $U_{\alpha} \in S$ with $U_{\alpha} \cap B_{R} \not = \emptyset$ and $U_{\alpha}\not \supset U \cap B_{R}$. If three mutually disjoint $U_{1},U_{2},U_{3} \in  S$ satisfy
\begin{equation}\label{KKK7200}
U_{i} \in S,
\quad
U_{i} \cap B_{R} \not = \emptyset
\quad \text{and} \quad
U_{i} \not \supset U \cap B_{R}
\qquad (i \in \{ 1,2,3 \} )
\end{equation}
then by Lemma \ref{HHH7000}, we have a contradiction because $U_{1},U_{2},U_{3}$ are mutually disjoint $(C^{1,\gamma},80R,\theta)$-domains. So there exist at most two disjoint $U_{\alpha},U_{\beta} \in S$ with
\begin{equation*}
U_{\alpha},U_{\beta} \in S,
\quad
U_{\alpha} \cap B_{R} \not = \emptyset,
\quad
U_{\beta} \cap B_{R} \not = \emptyset
\quad \text{and} \quad
U_{\alpha},U_{\beta} \not \supset U \cap B_{R}
\end{equation*}
satisfying
\begin{equation*}
U_{i} \in S,
\quad
U_{i} \cap B_{R} \not = \emptyset
\quad \text{and} \quad
U_{i} \not \supset U \cap B_{R}
\quad \Longrightarrow \quad
U_{i} \cap U_{\alpha} \not  = \emptyset 
\quad \text{or} \quad
U_{i} \cap U_{\beta} \not  = \emptyset.
\end{equation*}
Also if $U_{i} \supset U \cap B_{R}$ then by the fact that $U_{\alpha} \cap B_{R} \not = \emptyset$ and $U_{\beta} \cap B_{R} \not = \emptyset$, we have that $U_{i} \cap U_{\alpha} \not  = \emptyset$ and $U_{i} \cap U_{\beta} \not  = \emptyset$. So Case (2) or Case (3) holds depending on the number of disjoint $U_{i}$ satisfying \eqref{KKK7200}.

\medskip

\noindent [Step 3 : Case (1)] In Case (1), we take $U_{k} = U_{l} = \emptyset$. First, suppose that
\begin{equation}\label{KKK7230}
U_{i} \in S,
\qquad
U_{i} \subsetneq U_{j}
\qquad \text{and} \qquad 
U_{i} \cap B_{R} \not = \emptyset.
\end{equation}
Then by the assumption of Case (1), $U_{i} \cap B_{R} \supset U \cap B_{R} \not = \emptyset$. So from \eqref{KKK5200}, we find that $U_{j} \subset U_{i}$, which contradicts that \eqref{KKK7230}. Thus
\begin{equation}\label{KKK7250}
\{ U_{i} \in S : U_{i} \subsetneq U_{j}
\quad \text{and} \quad 
U_{i} \cap B_{R} \not = \emptyset \} = \emptyset.
\end{equation}
By \eqref{KKK7250} and the definition of $W_{j}$ in \eqref{KKK5400}, we have that 
\begin{equation*}\begin{aligned}
W_{j} \cap B_{R} 
& = [ U_{j} \cap B_{R} ] \setminus \left( \bigcup_{U_{i} \in S, ~ U_{i} \subsetneq U_{j} } [U_{i} \cap B_{R}] \right) \\
& = [ U_{j} \cap B_{R} ] \setminus \left( \bigcup_{U_{i} \in S, ~ U_{i} \subsetneq U_{j}, ~ U_{i} \cap B_{R} \not = \emptyset } [U_{i} \cap B_{R}] \right) \\
& = U_{j} \cap B_{R}.
\end{aligned}\end{equation*}
So lemma holds for the Case (1) with $U_{k} = \emptyset$ and $U_{l} = \emptyset$.

\medskip

\noindent [Step 4 : Preliminary for Case (2) and (3)] For Case (2) and (3), we will use that
\begin{equation}\label{KKK7300}\begin{aligned}
U_{\alpha} \in S, \, &  U_{\alpha} \not \supset U \cap B_{R}, \, U_{\alpha} \cap B_{R} \not = \emptyset, \,
U_{k} = \bigcup_{U_{i} \in S, U_{i} \subsetneq  U_{j}, U_{i} \cap U_{\alpha} \not = \emptyset,
U_{i} \cap B_{R} \not = \emptyset } U_{i} \\
& \quad \Longrightarrow \quad
U_{k} \in S,
\quad
U_{k} \subsetneq U_{j}
\quad \text{and} \quad
U_{k} \cap B_{R} \subsetneq U_{j} \cap B_{R}.
\end{aligned}\end{equation}
To prove \eqref{KKK7300}, we show the following holds for $U_{\alpha} \in S$ with $U_{\alpha} \not \supset U \cap B_{R}$ :
\begin{equation}\begin{aligned}\label{KKK7370}
&  U_{i} \in S,
\quad 
U_{i} \subsetneq  U_{j}, 
\quad
U_{i} \cap U_{\alpha} \not = \emptyset
\quad \text{ and }  \quad
U_{i} \cap B_{R} \not = \emptyset
\quad \text{ for } i \in \{ 1,2 \} \\
& \qquad \Longrightarrow \qquad U_{1} \cap U_{2} \not = \emptyset.
\end{aligned}\end{equation}
Suppose not. Then there exist $U_{1}, U_{2} \in S$ satisfying
\begin{equation}\label{KKK7380}
U_{i} \in S,
~
U_{i} \subsetneq  U_{j}, 
~
U_{i} \cap U_{\alpha} \not = \emptyset,
~
U_{i} \cap B_{R} \not = \emptyset
~ \text{ and } ~
U_{1} \cap U_{2} = \emptyset
\quad \text{ for } i \in \{ 1,2 \}.
\end{equation}
So we have that $U_{1} \cap U_{\alpha} \not = \emptyset$ and $U_{2} \cap U_{\alpha} \not = \emptyset$, and it follows from \eqref{HHH6500} that
\begin{equation*}
U_{1} \subset U_{\alpha} 
\qquad \text{or} \qquad
U_{\alpha} \subset U_{1},
\end{equation*}
and
\begin{equation*}
U_{2} \subset U_{\alpha} 
\qquad \text{or} \qquad
U_{\alpha} \subset U_{2}.
\end{equation*}
By comparing with $U_{1} \cap U_{2} = \emptyset$ in \eqref{KKK7380}, $U_{1} \cap U_{\alpha} \not = \emptyset$ and $U_{2} \cap U_{\alpha} \not = \emptyset$ in \eqref{KKK7370},
\begin{equation}\label{KKK7385}
U_{1} , U_{2} \subset U_{\alpha}.
\end{equation}
Since $U_{\alpha} \not \supset U \cap B_{R}$, we have that $B_{R} \setminus \overline{U_{\alpha}} \not = \emptyset$. So we obtain from \eqref{KKK7385} that 
\begin{equation}\label{KKK7390}
U_{1} \cap (\mathbb{R}^{n} \setminus \overline{U_{\alpha}}) = \emptyset,
\quad 
U_{2} \cap (\mathbb{R}^{n} \setminus \overline{U_{\alpha}}) = \emptyset
\quad \text{and} \quad
B_{R} \cap (\mathbb{R}^{n} \setminus \overline{U_{\alpha}}) \not= \emptyset.
\end{equation}
By \eqref{KKK7380}, $U_{1} \cap B_{R} \not = \emptyset$, $U_{2} \cap B_{R} \not = \emptyset$ and $U_{1} \cap U_{2} = \emptyset$. So with \eqref{KKK7390} and the fact that $\mathbb{R}^{n} \setminus \overline{U_{\alpha}}$ is $(C^{1,\gamma},80R,\theta)$-domain, Lemma \ref{HHH7000} gives a contradiction for $U_{1}$, $U_{2}$ and $\mathbb{R}^{n} \setminus \overline{U_{\alpha}}$. So \eqref{KKK7370} holds. In view of \eqref{KKK7370} and \eqref{HHH6500},
\begin{equation}\begin{aligned}\label{KKK7400}
& U_{i} \in S,
\quad 
U_{i} \subsetneq  U_{j}, 
\quad
U_{i} \cap U_{\alpha} \not = \emptyset
\quad \text{ and }  \quad
U_{i} \cap B_{R} \not = \emptyset
\quad \text{ for } i \in \{ 1,2 \} \\
& \qquad \Longrightarrow \qquad U_{1} \cap U_{2} \not = \emptyset \\
& \qquad \Longrightarrow \qquad 
U_{1} \subset U_{2} 
\quad \text{or} \quad
U_{2} \subset U_{1} \\
& \qquad \Longrightarrow \qquad 
U_{1} \cup U_{2} \in S
\qquad \text{and} \qquad
U_{1} \cup U_{2} \subsetneq U_{j}.
\end{aligned}\end{equation}
Since the number of the elements in $S$ are finite, by an induction using the definition of $U_{k}$ in \eqref{KKK7300} and \eqref{KKK7400}, one can show that 
\begin{equation}\label{KKK7450}
U_{k} \in S
\qquad \text{and} \qquad
U_{k} \subsetneq U_{j}.
\end{equation}
So it only remains to prove that $U_{k} \cap B_{R} \subsetneq U_{j} \cap B_{R}$. Suppose not. Then $U_{k} \cap B_{R} \supset U_{j} \cap B_{R}$. By \eqref{KKK5300},
\begin{equation*}
U_{k} \cap B_{R} \supset  U_{j} \cap B_{R} \supset U \cap B_{R}.
\end{equation*}
So by the definition of $U_{j}$ in \eqref{KKK5200}, we have that $U_{j} \subset U_{k}$, which contradicts \eqref{KKK7450}. So we find that $U_{k} \cap B_{R} \subsetneq U_{j} \cap B_{R}$, and \eqref{KKK7300} holds from \eqref{KKK7450}.

\medskip

\noindent [Step 5 : Case (2)] We handle Case (2). With the assumption \eqref{KKK6500} and \eqref{KKK6700}, set
\begin{equation}\label{KKK7500}
U_{k} = \bigcup_{U_{i} \in S, U_{i} \subsetneq  U_{j}, U_{i} \cap U_{\alpha} \not = \emptyset, U_{i} \cap B_{R} \not = \emptyset } U_{i} \text{ as in \eqref{KKK7300} \quad and \quad } U_{l} = \emptyset.
\end{equation}
Then by \eqref{KKK7300}, \eqref{KKK5500} holds. We next prove \eqref{KKK5600}. We claim that
\begin{equation}\label{KKK7600}
U_{i} \in S,
\quad
U_{i} \subsetneq U_{j} 
\quad \text{and} \quad
U_{i} \cap B_{R} \not = \emptyset
\quad \Longrightarrow \quad
U_{i} \subset U_{k}.
\end{equation}
By \eqref{KKK6700},
\begin{equation}\label{KKK7620}
U_{i} \in S,
\quad
U_{i} \subsetneq U_{j} 
\quad \text{and} \quad
U_{i} \cap B_{R} \not = \emptyset
\qquad \Longrightarrow \qquad
U_{i} \cap U_{\alpha} \not  = \emptyset.
\end{equation}
From \eqref{KKK7620} and \eqref{KKK7500}, if $U_{i} \in S$ satisfies
$U_{i} \subsetneq U_{j}$ and $U_{i} \cap B_{R} \not = \emptyset$ then
\begin{equation*}
U_{i} \subset \left( \bigcup_{U_{i} \in S, U_{i} \subsetneq  U_{j}, U_{i} \cap U_{\alpha} \not = \emptyset,
U_{i} \cap B_{R} \not = \emptyset } U_{i} \right)= U_{k},
\end{equation*}
and the claim \eqref{KKK7600} follows. By \eqref{KKK7600}, 
\begin{equation*}
\bigcup_{U_{i} \in S , U_{i} \subsetneq  U_{j}} [U_{i} \cap B_{R}]
= \bigcup_{U_{i} \in S , U_{i} \subsetneq  U_{j}, U_{i} \cap  B_{R}  \not = \emptyset} [U_{i} \cap B_{R}]
 \subset U_{k} \cap B_{R}.
\end{equation*}
On the other-hand, by the definition of $U_{k}$ in \eqref{KKK7500}, 
\begin{equation*}
U_{k} \cap B_{R}
= \bigcup_{U_{i} \in S , U_{i} \subsetneq  U_{j}, U_{i} \cap U_{\alpha} \not = \emptyset, U_{i} \cap B_{R} \not = \emptyset } [U_{i} \cap B_{R}]
\subset \bigcup_{U_{i} \in S , U_{i} \subsetneq  U_{j}  } [U_{i} \cap B_{R}].
\end{equation*}
By combining the above two inclusions,
\begin{equation}\label{KKK7700}
\bigcup_{U_{i} \in S , U_{i} \subsetneq U_{j} } [U_{i} \cap B_{R}]
= U_{k} \cap B_{R}.
\end{equation} 
From the definition of $W_{j}$ in \eqref{KKK5400}, \eqref{KKK7700} yields that
\begin{equation}\label{KKK7720}\begin{aligned}
W_{j} \cap B_{R} 
& = [U_{j} \cap B_{R}] \setminus \left( \bigcup_{U_{i} \in S, ~ U_{i} \subsetneq U_{j} } [U_{i} \cap B_{R}] \right) \\
& = [ U_{j} \cap B_{R}] \setminus [ U_{k} \cap B_{R}].
\end{aligned}\end{equation}
Also by \eqref{KKK5500},
\begin{equation*}
U_{k} \cap B_{R} \subset U_{j} \cap B_{R},
\end{equation*}
and we find from \eqref{KKK7720} that
\begin{equation}\label{KKK7740}
U_{j} \cap B_{R}
= ( W_{j} \cap B_{R} ) \cup (U_{k} \cap B_{R}).
\end{equation}
In view of \eqref{KKK5500}, we have that $U_{k} \subsetneq U_{j}$. So by the definition of $W_{j}$ in \eqref{KKK5400},
\begin{equation}\label{KKK7760}
W_{j} \cap U_{k} = \emptyset.
\end{equation}
Since $U_{l} = \emptyset$, it follows from \eqref{KKK7740} and \eqref{KKK7760} that
\begin{equation}
U_{j} \cap B_{R} 
= (W_{j} \sqcup U_{k}) \cap B_{R}
= (W_{j} \sqcup U_{k} \sqcup U_{l}) \cap B_{R}
\end{equation} 
which proves \eqref{KKK5600} for Case (2).

\medskip

\noindent [Step 6 : Case (3)] We handle Case (3). With the assumption \eqref{KKK7000} and \eqref{KKK7100}, set
\begin{equation}\label{KKK7800}
U_{k} = \bigcup_{U_{i} \in S, U_{i} \subsetneq  U_{j}, U_{i} \cap U_{\alpha} \not = \emptyset,
U_{i} \cap B_{R} \not = \emptyset } U_{i}
\end{equation}
and
\begin{equation}\label{KKK7900}
U_{l} = \bigcup_{U_{i} \in S, U_{i} \subsetneq  U_{j},  U_{i} \cap U_{\beta} \not = \emptyset, U_{i} \cap B_{R} \not = \emptyset } U_{i}.
\end{equation}
Then by \eqref{KKK7300},
\begin{equation}\label{KKK7950}
U_{k},U_{l} \in S,
\quad 
U_{k},U_{l} \subsetneq U_{j},
\quad 
U_{k} \cap B_{R} \subsetneq U_{j} \cap B_{R}
\quad \text{and} \quad
U_{l} \cap B_{R} \subsetneq U_{j} \cap B_{R},
\end{equation}
and \eqref{KKK5500} holds. So it only remains to prove \eqref{KKK5600}. To prove \eqref{KKK5600}, we claim that
\begin{equation}\label{KKK8000}
U_{i} \in S,
\quad
U_{i} \subsetneq U_{j} 
\quad \text{and} \quad
U_{i} \cap B_{R} \not = \emptyset
\quad \Longrightarrow \quad
U_{i} \subset U_{k} \cup U_{l}.
\end{equation}
By \eqref{KKK7100},
\begin{equation}\label{KKK8100}
U_{i} \in S,
~ ~
U_{i} \subsetneq U_{j} 
~ ~ \text{and} ~ ~ 
U_{i} \cap B_{R} \not = \emptyset
\quad \Longrightarrow \quad
U_{i} \cap U_{\alpha} \not  = \emptyset 
~ ~ \text{or} ~ ~
U_{i} \cap U_{\beta} \not  = \emptyset,
\end{equation}
It follows from \eqref{KKK8100}, the definition of $U_{k}$ in \eqref{KKK7800} and $U_{l}$ in \eqref{KKK7900} that if $U_{i} \in S$, $U_{i} \subsetneq U_{j}$ and $U_{i} \cap B_{R} \not = \emptyset$ then 
\begin{equation*}\begin{aligned}
U_{i} 
& \subset \left( \bigcup_{U_{m} \in S, U_{m} \subsetneq  U_{j}, U_{m} \cap U_{\alpha} \not = \emptyset,
U_{m} \cap B_{R} \not = \emptyset } U_{m} \right)
\cup \left( \bigcup_{U_{m} \in S, U_{m} \subsetneq  U_{j}, U_{m} \cap U_{\alpha} \not = \emptyset,
U_{m} \cap B_{R} \not = \emptyset } U_{m} \right) \\
& = U_{k} \cup U_{l},
\end{aligned}\end{equation*}
and the claim \eqref{KKK8000} follows. In view of \eqref{KKK8000}, 
\begin{equation*}
\bigcup_{U_{i} \in S , U_{i} \subsetneq  U_{j}} [U_{i} \cap B_{R}]
= \bigcup_{U_{i} \in S , U_{i} \subsetneq  U_{j}, U_{i} \cap  B_{R}  \not = \emptyset} [U_{i} \cap B_{R}]
 \subset (U_{k} \cup U_{l}) \cap B_{R}.
\end{equation*}
On the other-hand, by the definition of $U_{k}$, 
\begin{equation*}
U_{k} \cap B_{R}
= \bigcup_{U_{i} \in S , U_{i} \subsetneq  U_{j}, U_{i} \cap U_{\alpha} \not = \emptyset, U_{i} \cap B_{R} \not = \emptyset } [U_{i} \cap B_{R}]
\subset \bigcup_{U_{i} \in S , U_{i} \subsetneq  U_{j}  } [U_{i} \cap B_{R}].
\end{equation*}
Similarly, by the definition of $U_{l}$, 
\begin{equation*}
U_{l} \cap B_{R}
= \bigcup_{U_{i} \in S , U_{i} \subsetneq  U_{j}, U_{i} \cap U_{\beta} \not = \emptyset, U_{i} \cap B_{R} \not = \emptyset } [U_{i} \cap B_{R}]
\subset \bigcup_{U_{i} \in S, U_{i} \subsetneq  U_{j}  } [U_{i} \cap B_{R}].
\end{equation*}
By combining the above three inclusions,
\begin{equation}\label{KKK9100}
\bigcup_{U_{i} \in S , U_{i} \subsetneq U_{j} } [U_{i} \cap B_{R}]
= (U_{k} \cup U_{l}) \cap B_{R}.
\end{equation} 
From the definition of $W_{j}$ in \eqref{KKK5400}, \eqref{KKK9100} yields that
\begin{equation}\label{KKK9400}\begin{aligned}
W_{j} \cap B_{R} 
& = [U_{j} \cap B_{R}] \setminus \left( \bigcup_{U_{i} \in S, ~ U_{i} \subsetneq U_{j} } [U_{i} \cap B_{R}] \right) \\
& = [ U_{j} \cap B_{R}] \setminus [(U_{k} \cup U_{l}) \cap B_{R}].
\end{aligned}\end{equation}
In view of \eqref{KKK7950},
\begin{equation}\label{KKK9500}
(U_{k} \cup U_{l}) \cap B_{R} \subset U_{j} \cap B_{R},
\end{equation}
We find from \eqref{KKK9400} and \eqref{KKK9500} that
\begin{equation}\label{KKK9600}
U_{j} \cap B_{R}
= [ W_{j} \cap B_{R} ] \cup [ (U_{k} \cup U_{l}) \cap B_{R}].
\end{equation}
From \eqref{KKK7950}, we have that $U_{k},U_{l} \subsetneq U_{j}$. So by the definition of $W_{j}$ in \eqref{KKK5400},
\begin{equation}\label{KKK9200}
W_{j} \cap U_{k} = \emptyset
\qquad \text{and} \qquad
W_{j} \cap U_{l} = \emptyset.
\end{equation}

We claim that 
\begin{equation}\label{KKK9300}
U_{k} \cap U_{l} \not = \emptyset
\qquad \Longrightarrow \qquad
\text{the assumption in Case (2) holds.}
\end{equation}
Suppose that $U_{k} \cap U_{l} \not = \emptyset$. From \eqref{HHH6500}, we have that $U_{k} \subset U_{l}$ or $U_{l} \subset U_{k}$ holds. Without loss of generality, we assume that $U_{k} \subset U_{l}$. Then there exists
\begin{equation*}
U_{m} \in \{ U_{i} \in S :  U_{i} \subsetneq  U_{j}, ~ U_{i} \cap U_{\alpha} \not = \emptyset, ~
U_{i} \cap B_{R} \not = \emptyset \} \not = \emptyset,
\end{equation*}
which holds from \eqref{KKK7800} and the fact that $U_{k} \not = \emptyset$. So again by \eqref{KKK7800},
\begin{equation*}
U_{\alpha} \cap U_{k} 
= \bigcup_{U_{i} \in S, U_{i} \subsetneq  U_{j}, U_{i} \cap U_{\alpha} \not = \emptyset,
U_{i} \cap B_{R} \not = \emptyset } (U_{i} \cap U_{\alpha})
\supset (U_{m} \cap U_{\alpha})
\not = \emptyset.
\end{equation*}
Similarly, one can prove that $U_{\beta} \cap U_{l} \not = \emptyset$. So by the fact that $U_{k} \subset U_{l}$,
\begin{equation}\label{KKK9350}
U_{\alpha} \cap U_{l} \not = \emptyset
\qquad \text{and} \qquad
U_{\beta} \cap U_{l} \not = \emptyset.
\end{equation}
From \eqref{HHH6500} and \eqref{KKK9350},
\begin{equation*}
U_{\alpha} \subset U_{l} 
\qquad \text{or} \qquad
U_{l} \subset U_{\alpha},
\end{equation*}
and 
\begin{equation*}
U_{\beta} \subset U_{l} 
\qquad \text{or} \qquad
U_{l} \subset U_{\beta}.
\end{equation*}
We have from \eqref{KKK7000} that $U_{\alpha} \cap U_{\beta} = \emptyset$. So by comparing \eqref{KKK9350} with the two above inclusions,
\begin{equation*}
U_{\alpha}, U_{\beta}, U_{k} \subset U_{l},
\end{equation*}
and it follows from \eqref{KKK7100} that
\begin{equation*}
U_{i} \in S 
~ \text{ and } ~
U_{i} \cap B_{R} \not = \emptyset
\quad \Longrightarrow \quad
U_{i} \cap U_{k} \not  = \emptyset
~ \text{ or } ~
U_{i} \cap U_{l} \not  = \emptyset.
\quad \Longrightarrow \quad
U_{i} \cap U_{l} \not  = \emptyset.
\end{equation*}
From \eqref{KKK5500}, we have that $U_{l} \not \supset U \cap  B_{R}$. Also by that $U_{l} \not = \emptyset$, we have that $\{ U_{i} \in S :  U_{i} \subsetneq  U_{j}, ~ U_{i} \cap U_{\beta} \not = \emptyset, ~ U_{i} \cap B_{R} \not = \emptyset \} \not = \emptyset$, which gives that $U_{l} \cap B_{R} \not = \emptyset$ by \eqref{KKK7900}. So $U_{l}$ satisfies the assumption of Case (2) instead of $U_{\alpha}$. Under the assumption $U_{k} \cap U_{l} \not = \emptyset$, Case (3) turns to Case (2) which we proved earlier. So we only consider the case that $U_{k} \cap U_{l} = \emptyset$. 

If $U_{k} \cap U_{l} = \emptyset$ then we obtain from \eqref{KKK9600} and \eqref{KKK9200} that
\begin{equation*}
U_{j} \cap B_{R} 
= (W_{j} \sqcup U_{k} \sqcup U_{l}) \cap B_{R}
\end{equation*} 
which proves \eqref{KKK5600}. This completes the proof.
\end{proof}

Let $U$ be composite $(C^{1,\gamma},80R,\theta)$-domain with subdomains $\{ U_{1}, \cdots, U_{K} \}$. By using Lemma \ref{KKK3000} and Lemma \ref{KKK5000}, we prove in Lemma \ref{MMM1000} that $U \cap B_{R}$ can be decomposed into $ W_{i_{l}} \sqcup \cdots \sqcup W_{i_{-m}}$ for $W_{k}$ in \eqref{MMM1050} so that $U_{i_{0}} \supsetneq \cdots \supsetneq U_{i_{l}}$, $U_{i_{0}} \supsetneq \cdots \supsetneq U_{i_{-m}}$ and $U_{i_{-m}}, \cdots, U_{i_{0}}, \cdots U_{i_{l}} \in S$ $(l,m \geq 0)$.

\begin{lem}\label{MMM1000}
For any $R \in (0,R_{4}]$ with $R_{4}(n,\gamma,\theta,\tau) $ in Lemma \ref{HHH1000}, let $U$ be composite $(C^{1,\gamma},80R,\theta)$-domain with subdomains $\{ U_{1}, \cdots, U_{K} \}$ and $U \cap B_{R} \not = \emptyset$. Set $S = \{ U_{0} : =U, U_{1}, \cdots, U_{K}, U_{K+1} : = \emptyset \}$ and
\begin{equation}\label{MMM1050}
W_{j} = U_{j} \setminus \left( \bigcup_{U_{i} \in S, ~ U_{i} \subsetneq U_{j} } U_{i} \right) 
\qquad (j \in \{ 0,\cdots, K \} ).
\end{equation}
Then there exist $U_{i_{l+1}}, \cdots, U_{i_{-m-1}} \in S$ with $l,m \geq 0$ such that
\begin{equation}\label{MMM1100}
U \cap B_{R} = ( W_{i_{l}} \sqcup \cdots \sqcup W_{i_{-m}} ) \cap B_{R},
\end{equation}
\begin{equation}\label{MMM1200}
U_{i_{k}} \supsetneq U_{i_{k+1}} \quad \text{for} \quad 0 \leq k \leq l,
\qquad
U_{i_{k}} \supsetneq U_{i_{k-1}} \quad \text{for} -m \leq k \leq 0
\end{equation}
and 
\begin{equation}\label{MMM1250}
U_{i_{-m-1}} = U_{i_{l+1}} = \emptyset
\end{equation}
satisfying 
\begin{equation}\label{MMM1300}
U \cap B_{R} 
= U_{i_{0}} \cap B_{R} 
= ( W_{i_{0}} \sqcup U_{i_{1}} \sqcup U_{i_{-1}} ) \cap B_{R}
\end{equation}
\begin{equation}\label{MMM1500}
U_{i_{k}} \cap B_{R}  = (W_{i_{k}} \sqcup U_{i_{k+1}}) \cap B_{R} 
\qquad
(1 \leq k \leq l)
\end{equation}
\begin{equation}\label{MMM1600}
U_{i_{k}} \cap B_{R}  =  (W_{i_{k}} \sqcup U_{i_{k-1}}) \cap B_{R} 
\qquad
(-m \leq k \leq -1),
\end{equation}
and
\begin{equation}\label{MMM1700}
U_{i_{1}} \cap B_{R}, U_{i_{-1}} \cap B_{R} 
\subsetneq 
U_{i_{0}} \cap B_{R}.
\end{equation}
Moreover, if $ \partial U \cap B_{R} \not = \emptyset$ then $m = 0$.
\end{lem}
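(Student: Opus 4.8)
The plan is to construct the two nested chains
\[
U_{i_{0}}\supsetneq U_{i_{1}}\supsetneq\cdots\supsetneq U_{i_{l}}\supsetneq U_{i_{l+1}}=\emptyset,
\qquad
U_{i_{0}}\supsetneq U_{i_{-1}}\supsetneq\cdots\supsetneq U_{i_{-m}}\supsetneq U_{i_{-m-1}}=\emptyset
\]
recursively, applying Lemma \ref{KKK5000} exactly once at the top node $U_{i_{0}}$ and Lemma \ref{KKK3000} at every node below it. Since $R\in(0,R_{4}]\subset(0,R_{3}]$ and $U\cap B_{R}\neq\emptyset$, Lemma \ref{KKK5000} applies to $U$: it yields a set $U_{i_{0}}:=U_{j}\in S$ with $U_{i_{0}}\cap B_{R}=U\cap B_{R}$ (so $U_{i_{0}}\neq\emptyset$), together with $U_{i_{1}}:=U_{k}$ and $U_{i_{-1}}:=U_{l}$ in $S$ satisfying, by \eqref{KKK5500} and \eqref{KKK5600}, the relations $U_{i_{\pm1}}\subsetneq U_{i_{0}}$, $U_{i_{\pm1}}\cap B_{R}\subsetneq U_{i_{0}}\cap B_{R}$, and $U_{i_{0}}\cap B_{R}=(W_{i_{0}}\sqcup U_{i_{1}}\sqcup U_{i_{-1}})\cap B_{R}$. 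These are exactly \eqref{MMM1300} and \eqref{MMM1700}, and they start the $k=0$ case of \eqref{MMM1200}.

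Next I would run the recursion on the positive side. Suppose $U_{i_{k}}\in S$ (for some $k\ge1$) has been produced with $U_{i_{k}}\neq\emptyset$ and $U_{i_{k}}\cap B_{R}\subsetneq U\cap B_{R}$; note that such a $U_{i_{k}}$, being a union of members of $S$ meeting $B_{R}$ (see \eqref{KKK3050}), automatically satisfies $U_{i_{k}}\cap B_{R}\neq\emptyset$, and $U_{i_{k}}\cap B_{R}\subsetneq U\cap B_{R}$ forces $U_{i_{k}}\not\supset U\cap B_{R}$. Hence Lemma \ref{KKK3000} applies to $U_{i_{k}}$ and gives $U_{i_{k+1}}:=U_{k}\in S$ with $U_{i_{k+1}}\subsetneq U_{i_{k}}$ and $U_{i_{k}}\cap B_{R}=(W_{i_{k}}\sqcup U_{i_{k+1}})\cap B_{R}$, which is \eqref{MMM1500}; moreover $U_{i_{k+1}}\cap B_{R}\subset U_{i_{k}}\cap B_{R}\subsetneq U\cap B_{R}$, so the induction hypothesis is preserved. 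If $U_{i_{k+1}}=\emptyset$ we stop and set $l:=k$. Because $U_{i_{0}}\supsetneq U_{i_{1}}\supsetneq\cdots$ is a strictly decreasing chain of distinct elements of the finite set $S$, the recursion terminates, producing $l\ge0$ with $U_{i_{l+1}}=\emptyset$ and the positive half of \eqref{MMM1200}. The negative chain $U_{i_{-1}}\supsetneq U_{i_{-2}}\supsetneq\cdots$ is built identically via Lemma \ref{KKK3000}, giving \eqref{MMM1600}, the negative half of \eqref{MMM1200}, and $U_{i_{-m-1}}=\emptyset$; this is \eqref{MMM1250}.

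The decomposition \eqref{MMM1100} then follows by telescoping: starting from $U\cap B_{R}=U_{i_{0}}\cap B_{R}=(W_{i_{0}}\sqcup U_{i_{1}}\sqcup U_{i_{-1}})\cap B_{R}$ and substituting successively $U_{i_{k}}\cap B_{R}=(W_{i_{k}}\sqcup U_{i_{k+1}})\cap B_{R}$ for $1\le k\le l$ (with $U_{i_{l+1}}=\emptyset$) and $U_{i_{k}}\cap B_{R}=(W_{i_{k}}\sqcup U_{i_{k-1}})\cap B_{R}$ for $-m\le k\le-1$ (with $U_{i_{-m-1}}=\emptyset$). Each substitution replaces one disjoint piece by two, so the final union $W_{i_{-m}}\sqcup\cdots\sqcup W_{i_{l}}$ is disjoint; this is of course consistent with Lemma \ref{JJJ1000}, which guarantees all the $W_{j}$ are mutually disjoint.

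It remains to prove the last assertion: if $\partial U\cap B_{R}\neq\emptyset$ then $m=0$, i.e.\ $U_{i_{-1}}=\emptyset$. I would argue this by inspecting the case split in the proof of Lemma \ref{KKK5000}: the second subdomain $U_{l}=U_{i_{-1}}$ is nonempty only in its Case (3) (and there only in the subcase $U_{k}\cap U_{l}=\emptyset$, the other one being reduced to Case (2)), in which case both $U_{i_{1}}$ and $U_{i_{-1}}$ are nonempty, mutually disjoint, meet $B_{R}$, and—being unions of members of $S$ strictly contained in $U_{i_{0}}\subset U$—are disjoint from the open set $\mathbb{R}^{n}\setminus\overline{U}$. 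Since $\partial U\cap B_{R}\neq\emptyset$ and $U$ is a $(C^{1,\gamma},80R,\theta)$-domain, some point of $B_{R}$ lies in $\mathbb{R}^{n}\setminus\overline{U}$, and $\mathbb{R}^{n}\setminus\overline{U}$ is itself a $(C^{1,\gamma},80R,\theta)$-domain; thus $U_{i_{1}}$, $U_{i_{-1}}$, $\mathbb{R}^{n}\setminus\overline{U}$ are three pairwise disjoint members of the composite structure, each meeting $B_{R}$, contradicting Lemma \ref{HHH7000}. Hence Case (3) cannot occur, so $U_{i_{-1}}=\emptyset$ and $m=0$. The main obstacle is precisely this last paragraph: one must track through the case analysis of Lemma \ref{KKK5000} to identify which branch yields a nonempty ``left'' subdomain and then verify that the three-disjoint-domain configuration (with the requisite regularity of $\mathbb{R}^{n}\setminus\overline{U}$ and the inclusions $U_{i_{\pm1}}\subset U$) is set up so that Lemma \ref{HHH7000} bites; the rest is a routine finite recursion plus a telescoping identity built on Lemmas \ref{KKK3000}, \ref{KKK5000} and \ref{JJJ1000}.
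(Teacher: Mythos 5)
Your proposal is correct and follows essentially the same route as the paper: one application of Lemma~\ref{KKK5000} at the top node to produce $U_{i_{0}},U_{i_{1}},U_{i_{-1}}$, then a finite recursion with Lemma~\ref{KKK3000} down each chain (termination from $\lvert S\rvert<\infty$ and strict inclusions), telescoping to get \eqref{MMM1100}, and Lemma~\ref{HHH7000} applied to $U_{i_{1}}$, $U_{i_{-1}}$, $\mathbb{R}^{n}\setminus\overline{U}$ to force $m=0$ when $\partial U\cap B_{R}\neq\emptyset$. The only cosmetic difference is organizational: the paper splits into the two cases $\partial U\cap B_{R}=\emptyset$ and $\partial U\cap B_{R}\neq\emptyset$ from the outset, and in the latter case applies Lemma~\ref{HHH7000} directly to $U_{i_{1}},U_{i_{-1}},\mathbb{R}^{n}\setminus\overline{U}$ without re-opening the case analysis inside Lemma~\ref{KKK5000}, which shortens your last paragraph; the substance is the same.
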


\begin{rem}\label{MMM2000}
In view of Lemma \ref{MMM1000}, if $a_{ij} = \sum_{k} a_{ij}^{k} \chi_{W_{k}}$ then 
\begin{equation*}
a_{ij} = a_{ij}^{i_{0}} \chi_{ U_{i_{0}} \setminus (U_{i_{1}} \cup U_{i_{-1}}) } 
+ \sum_{1 \leq k \leq l} a_{ij}^{i_{k}} \chi_{ U_{i_{k}} \setminus U_{i_{k+1}} } + \sum_{-m \leq k \leq -1} a_{ij}^{i_{k}} \chi_{ U_{i_{k}} \setminus U_{i_{k-1}} } 
\text{ in } U \cap B_{R}.
\end{equation*}
\end{rem}

\begin{proof}

[Step 1 : $\partial U \cap B_{R} = \emptyset$] Assume that $\partial U \cap B_{R} = \emptyset$. By using Lemma \ref{KKK5000}, one can choose $U_{i_{0}}$, $U_{i_{1}}$ and $U_{i_{-1}}$ satisfying \eqref{MMM1300}, \eqref{MMM1700},
\begin{equation}\label{MMM3100}
U_{i_{0}} \supsetneq U_{i_{1}},
\quad
U_{i_{0}} \supsetneq U_{i_{-1}}
\quad \text{and} \quad
U_{i_{1}} \cap B_{R}, U_{i_{-1}} \cap B_{R}
\subsetneq U_{i_{0}} \cap B_{R}
= U \cap B_{R}.
\end{equation}
With Lemma \ref{KKK3000}, we decompose $U_{i_{1}}$ and  $U_{i_{-1}}$. We first handle $U_{i_{1}}$. If $U_{i_{1}} \cap B_{R} = \emptyset$ then the chain is finished by taking $l = 0$ and $U_{i_{1}} = \emptyset$. Otherwise, $U_{i_{1}} \cap B_{R} \not = \emptyset$ and $U_{i_{1}} \not \supset U \cap B_{R}$ by \eqref{MMM3100}. So from Lemma \ref{KKK3000}, there exists $U_{i_{2}}$ satisfying \eqref{MMM1500} for $k=1$ and $U_{i_{2}} \subsetneq U_{i_{1}}$ and $U_{i_{2}} \cap B_{R} \subsetneq U_{i_{1}} \cap B_{R}$. Since the number of the element in $S$ is finite, repeat this process until $U_{i_{l+1}} \cap B_{R} = \emptyset$. Then by letting $U_{i_{l+1}} = \emptyset$, we have \eqref{MMM1500} and 
\begin{equation*}
U_{i_{k}} \supsetneq U_{i_{k+1}} \quad  \text{for} \quad   0 \leq k \leq l
\qquad \text{and} \qquad 
U_{i_{l+1}} = \emptyset.
\end{equation*}
Similarly, one can repeat this process for handling $U_{i_{-1}}$ to obtain \eqref{MMM1600} and
\begin{equation*}
U_{i_{k}} \supsetneq U_{i_{k+1}} \quad  \text{for} \quad   -m \leq k \leq 0
\qquad  \text{and} \qquad 
U_{i_{-m-1}} = \emptyset.
\end{equation*}
In view of \eqref{MMM1250}, \eqref{MMM1300}, \eqref{MMM1500} and \eqref{MMM1600}, we obtain that \eqref{MMM1100}.

\medskip

[Step 2 : $\partial U \cap B_{R} \not = \emptyset$] Assume that $\partial U \cap B_{R} \not = \emptyset$. Then
\begin{equation}\label{MMM3500}
(\mathbb{R}^{n} \setminus \overline{U}) \cap B_{R} \not = \emptyset 
\qquad \text{and} \qquad
\mathbb{R}^{n} \setminus \overline{U}
\text{ is } (C^{1,\gamma},80R,\theta)\text{-domain}.
\end{equation}
By Lemma \ref{KKK5000}, one can find $U_{i_{0}}$, $U_{i_{1}}, U_{i_{-1}} \in S$ satisfying \eqref{MMM1300}, \eqref{MMM1700} and
\begin{equation*}
U_{i_{0}} \supsetneq U_{i_{1}},
\quad
U_{i_{0}} \supsetneq U_{i_{-1}}
\quad \text{and} \quad
U_{i_{1}} \cap B_{R}, U_{i_{-1}} \cap B_{R}
\subsetneq U_{i_{0}} \cap B_{R}
= U \cap B_{R},
\end{equation*}
which implies that  $U_{i_{1}} \cap U_{i_{-1}} = \emptyset$, $U_{i_{1}} \subset U$ and $U_{i_{-1}} \subset U$. So if $U_{i_{1}} \cap B_{R} \not = \emptyset$ and $U_{i_{-1}} \cap B_{R} \not = \emptyset$ then with \eqref{MMM3500}, one can apply Lemma \ref{HHH7000} to $U_{i_{1}}$, $U_{i_{-1}}$ and $ \mathbb{R}^{n} \setminus \overline{U}$ obtaining a contradiction. So $U_{i_{1}} = \emptyset$ or $U_{i_{-1}}  = \emptyset$ holds, and without loss of generality we let $U_{i_{-1}}  = \emptyset$ to obtain $m=0$. Then one can apply Lemma \ref{KKK3000} inductively as in the case that $\partial U \cap B_{R} = \emptyset$ to obtain \eqref{MMM1200} - \eqref{MMM1600}. Since $m=0$, \eqref{MMM1100} follows from \eqref{MMM1300} - \eqref{MMM1600}.
\end{proof}

To prove Theorem \ref{main theorem1} and Theorem \ref{main theorem2}, we apply Lemma \ref{CTS8000} and Lemma \ref{HHH1000} to $U_{i_{0}} \supsetneq \cdots \supsetneq U_{i_{l}}$ and $U_{i_{0}} \supsetneq \cdots \supsetneq U_{i_{-m}}$ in Lemma \ref{MMM1000}. Then we find that the boundaries of a composite $(C^{1,\gamma},80R,\theta)$-domain become graphs with respect to some coordinate system. We first handle the case that $B_{R} \subset U$.

\maintheoreminterior*

\begin{proof}
For $R_{4}(n,\gamma,\theta,\tau) \in (0,1]$ in Lemma \ref{HHH1000}, we let $R_{0} = R_{4} \in (0,1]$. 

\medskip

\noindent [Step 1: Settings] By Lemma \ref{MMM1000}, there exist $U_{i_{l+1}}, \cdots, U_{i_{-m-1}} \in S$ $(l,m \geq 0)$ such that
\begin{equation}\label{MMM6100}
U \cap B_{R} = ( W_{i_{l}} \sqcup \cdots \sqcup W_{i_{-m}} ) \cap B_{R},
\end{equation}
\begin{equation}\label{MMM6200}
U_{i_{k}} \supsetneq U_{i_{k+1}} \quad \text{for} \quad 0 \leq k \leq l,
\qquad
U_{i_{k}} \supsetneq U_{i_{k-1}} \quad \text{for} -m \leq k \leq 0
\end{equation}
and 
\begin{equation}\label{MMM6250}
U_{i_{-m-1}} = U_{i_{l+1}} = \emptyset
\end{equation}
satisfying 
\begin{equation}\label{MMM6300}
U \cap B_{R} 
= U_{i_{0}} \cap B_{R} 
= ( W_{i_{0}} \sqcup U_{i_{1}} \sqcup U_{i_{-1}} ) \cap B_{R}
\end{equation}
\begin{equation}\label{MMM6400}
U_{i_{k}} \cap B_{R}  = (W_{i_{k}} \sqcup U_{i_{k+1}}) \cap B_{R} 
\qquad
(1 \leq k \leq l)
\end{equation}
\begin{equation}\label{MMM6500}
U_{i_{k}} \cap B_{R}  =  (W_{i_{k}} \sqcup U_{i_{k-1}}) \cap B_{R} 
\qquad
(-m \leq k \leq -1)
\end{equation}
and
\begin{equation}\label{MMM6550}
U_{i_{1}} \cap B_{R}, U_{i_{-1}} \cap B_{R} 
\subsetneq 
U_{i_{0}} \cap B_{R}.
\end{equation}
So \eqref{MMM5300} holds from \eqref{MMM6100}.

We claim that
\begin{equation}\label{MMM6700}
U_{i_{k}} \not \supset B_{R} 
\qquad ( k \in \{ -m,\cdots, l \} , ~ k \not = 0 ).
\end{equation}
Suppose not. Then we have that $B_{R} \subset U_{i_{k}}$ for some $k \in \{ -m,\cdots, l \} , ~ k \not = 0$. On the other-hand, we have from \eqref{MMM6300}, \eqref{MMM6400} and \eqref{MMM6500} that 
\begin{equation*}
U_{i_{k}} \cap B_{R} \subsetneq U_{i_{0}} \cap B_{R} \subset B_{R},
\end{equation*}
and a contradiction occurs from the fact that $B_{R} \subset U_{i_{k}}$. So the claim \eqref{MMM6570} holds.

One can easily prove that
\begin{equation}\label{MMM6560}
\partial V \cap B_{R} = \emptyset
\qquad \Longrightarrow \qquad
B_{R} \subset V 
\qquad \text{or} \qquad
B_{R} \cap V = \emptyset,
\end{equation}
and
\begin{equation}\label{MMM6570}
\partial V \cap B_{R} \not = \emptyset
\qquad \Longrightarrow \qquad
V \not \supset B_{R}
\qquad \text{and} \qquad
V^{c} \not \supset B_{R}.
\end{equation}

We claim that 
\begin{equation}\label{MMM6600}
\partial U_{i_{0}} \cap B_{R} = \emptyset.
\end{equation}
Suppose not. Then we discover that $\partial U_{i_{0}} \cap B_{R} \not = \emptyset$, and so \eqref{MMM6570} implies that $U_{i_{0}} \not \supset B_{R}$. From \eqref{MMM6300}, we have that $U \cap B_{R} = U_{i_{0}} \cap B_{R} \not \supset B_{R}$, and this contradicts the assumption of the lemma that $U \supset B_{R}$. So the claim \eqref{MMM6600} holds.

From \eqref{MMM6100} and the fact that $B_{R} \subset U$, there exists $k \in \{ -m,\cdots, l \} $ satisfying
\begin{equation}\label{MMM6900}
\mathbf{0} \in W_{i_{k}} \subset U_{i_{k}}.
\end{equation} 
Without loss of generality, we may assume that $k \geq 0$. We first prove the lemma when $k \geq 1$.

\bigskip

\noindent [Step 2] In this step, we prove the lemma for $k \geq 1$.

\medskip

\noindent [Step 2-1 : $U_{i_{k}}, U_{i_{k+1}},\cdots, U_{i_{l}}$] In this step, we show that \eqref{MMM5400}, \eqref{MMM5500} for $d \in \{ k, \cdots, l \} $ and \eqref{MMM5600} hold when $k \geq 1$. We claim that
\begin{equation}\label{MMM6950}
\partial U_{i_{k}} \cap B_{R} \not = \emptyset.
\end{equation}
Suppose not. Then we have that $\partial U_{i_{k}} \cap B_{R} = \emptyset$, and it follows from \eqref{MMM6560} and \eqref{MMM6900} that $B_{R} \subset U_{i_{k}}$. But this contradicts \eqref{MMM6700}. So the claim \eqref{MMM6950} holds.

Since $U_{i_{k}}$ is $(C^{1,\gamma},80R,\theta)$-domain, by applying Lemma \ref{CTS8000}, there exist an orthonormal matrix $V$ with $\det V>0$ and  $C^{1,\gamma}$-function $\varphi : B_{R}' \to \mathbb{R}$ such that
\begin{equation}\label{MMM7000}
U_{i_{k}} \cap B_{R} = \left \{ \sum\limits_{1 \leq k \leq n} y^{k} V_{k} \in B_{R} : y^{1} > \varphi_{k}(y') \right\}
\qquad \text{and} \qquad
|\varphi_{k}(0')| < R,
\end{equation}
with the estimates
\begin{equation}\label{MMM7200}
D_{y'}\varphi_{k}(0')=0',
\quad
\| D_{y'}\varphi_{k} \|_{L^{\infty}(B_{R}')} 
\leq \tau
\quad \text{and} \quad
[D_{y'}\varphi_{k}]_{C^{\gamma}(B_{R}')} 
\leq 18 n\theta.
\end{equation}
So for $y$-coordinate system with the orthonormal basis $\{ V_{1}, \cdots, V_{n} \}$, we have that
\begin{equation}\label{MMM7100}
U_{i_{k}} \cap B_{R}  = \left \{ (y^{1},y') \in B_{R}  : y^{1} > \varphi_{k}(y') \right\}
\qquad \text{and} \qquad
|\varphi_{k}(0')| < R.
\end{equation}
Thus \eqref{MMM5500} for $d=0$ and \eqref{MMM5600} hold. We next handle $U_{i_{k+1}}$. 

Suppose that $\partial U_{i_{k+1}} \cap B_{R} = \emptyset$. Then by \eqref{MMM6560}, we have that $B_{R} \subset U_{i_{k+1}}$ or $B_{R} \cap U_{i_{k+1}} = \emptyset$. On the other-hand, from \eqref{MMM6200} and \eqref{MMM6550}, we have that $U_{i_{k+1}} \cap B_{R} \subset U_{i_{1}} \cap B_{R} \subsetneq U_{i_{0}} \cap B_{R} \subset B_{R}$. So we find that $B_{R} \cap U_{i_{k+1}} = \emptyset$. So with \eqref{MMM7100}, choose $l=k$ and $\varphi_{l+1} \equiv R$. This finishes the proof for [Step 2-1] when $\partial U_{i_{k+1}} \cap B_{R} = \emptyset$. 

Next, assume that for some $l \geq k+1$, we have that
\begin{equation}\label{MMM7250}
U_{i_{k+1}} \cap B_{R}  \not = \emptyset , \cdots, U_{i_{l}} \cap B_{R} \not = \emptyset 
\qquad \text{ and } \qquad
U_{i_{l+1}} \cap B_{R} = \emptyset.
\end{equation}
Let $z$-coordinate system be the coordinate system with $z = -y$. Then by letting $\bar{\varphi}_{k}(z') = - \varphi_{k}(-z')$, we have that $(C^{1,\gamma},80R,\theta)$-domain $\mathbb{R}^{n} \setminus \overline{U_{i_{k}}}$ satisfy
\begin{equation}\label{MMM7300}
(\mathbb{R}^{n} \setminus \overline{U_{i_{k}}}) \cap B_{R} 
 = \{ (z^{1},z') \in B_{R} : z^{1} > \bar{\varphi}_{k}(z')\}
\quad \text{and} \quad
|\bar{\varphi}_{k}(0')| < R,
\end{equation}
with the estimates
\begin{equation}\label{MMM7400}
D_{z'}\bar{\varphi}_{k}(0')=0,
\quad
\| D_{z'}\bar{\varphi}_{k} \|_{L^{\infty}(B_{R}')} 
\leq 1
\quad \text{and} \quad
[D_{z'}\bar{\varphi}_{k}]_{C^{\gamma}(B_{R}')} 
\leq 18 n\theta.
\end{equation}
Since $k \geq 1$, we have from \eqref{MMM6200} that $(C^{1,\gamma},80R,\theta)$-domains $\mathbb{R}^{n} \setminus \overline{U_{i_{k}}}$ and $U_{i_{k+1}}$ are disjoint. So we apply Lemma \ref{HHH1000} to $\mathbb{R}^{n} \setminus \overline{U_{i_{k}}}$, $U_{i_{k+1}}$, $\bar{\varphi}_{0}$ and $\bar{\varphi}_{1}$ instead of $U_{1}$, $U_{2}$, $\varphi_{1}$ and $\varphi_{2}$  with respect to  $z$-coordinate  by using \eqref{MMM7300} and \eqref{MMM7400}. Then there exists $C^{1,\gamma}$-function $\bar{\varphi}_{k+1} : B_{R}' \to \mathbb{R}$ such that 
\begin{equation*}
U_{	i_{k+1}} \cap B_{R} 
 = \{ (z^{1},z') \in B_{R} : z^{1} < \bar{\varphi}_{k+1}(z')\}
\end{equation*}
with the estimates
\begin{equation*}
\|D_{z'} \bar{\varphi}_{k} \|_{L^{\infty}(B_{R}')}, \|D_{z'} \bar{\varphi}_{k+1} \|_{L^{\infty}(B_{R}')} \leq  \tau
\end{equation*}
and
\begin{equation*}
[D_{z'} \bar{\varphi}_{k}]_{C^{\gamma}(B_{R}')},
[D_{z'} \bar{\varphi}_{k+1}]_{C^{\gamma}(B_{R}')} \leq  288n\theta.
\end{equation*}
Thus by letting $\varphi_{k+1}(y') = - \bar{\varphi}_{k+1}(-y')$, we obtain that
\begin{equation*}
U_{i_{k+1}} \cap B_{R} 
 = \{ (y^{1},y') \in B_{R} : y^{1} > \varphi_{k+1}(y')\}
\end{equation*}
with the estimates
\begin{equation*}
\|D_{y'} \varphi_{k} \|_{L^{\infty}(B_{R}')}, \|D_{y'} \varphi_{k+1} \|_{L^{\infty}(B_{R}')} \leq  \tau
\end{equation*}
and
\begin{equation*}
[D_{y'} \varphi_{k}]_{C^{\gamma}(B_{R}')},
[D_{y'} \varphi_{k+1}]_{C^{\gamma}(B_{R}')} \leq 288n\theta.
\end{equation*}
Repeat this process for $U_{i_{k+2}}, U_{i_{k+3}}, \cdots, U_{i_{l}}$ instead of $U_{i_{k+1}}$. Then one can find $C^{1,\gamma}$-functions $\varphi_{k}, \cdots, \varphi_{l} : B_{R}' \to \mathbb{R}$ such that for $d \in \{ k, \cdots, l \} $,
\begin{equation}\label{MMM7700}
U_{i_{d}} \cap B_{R} 
 = \{ (y^{1},y') \in B_{R} : y^{1} > \varphi_{d}(y')\}
\end{equation}
with the estimates
\begin{equation}\label{MMM7800}
\|D_{y'} \varphi_{d} \|_{L^{\infty}(B_{R}')} \leq  \tau
\qquad \text{and} \qquad
[D_{y'} \varphi_{d}]_{C^{\gamma}(B_{R}')} \leq 288n\theta.
\end{equation}
Recall from \eqref{MMM6200} and \eqref{MMM6400} that
\begin{equation*}
W_{i_{d}} \cap B_{R} = [U_{i_{d}} \setminus U_{i_{d+1}}] \cap B_{R}
\qquad \text{and} \qquad 
U_{i_{d}} \supsetneq U_{i_{d+1}}
\qquad \text{for} \qquad  
d \in \{ k, \cdots, l \} .
\end{equation*}
So by \eqref{MMM7700} and \eqref{MMM7800}, we find that \eqref{MMM5400} and \eqref{MMM5500} holds for $d \in \{ k, \cdots, l \}$ by taking $\varphi_{l+1} \equiv R$. Moreover, \eqref{MMM5600} follows from \eqref{MMM6900} and \eqref{MMM7200}. To finish [Step 2] for $k \geq 1$, it only remains to prove \eqref{MMM5400}  and \eqref{MMM5500} for $d \in \{ -m, \cdots, k-1 \}$.

\medskip

\noindent [Step 2-2 : $U_{i_{1}}, U_{i_{2}},\cdots, U_{i_{k-1}}$] In this step, we show that \eqref{MMM5400}, \eqref{MMM5500} for $d \in \{ 1, \cdots, k-1 \} $ and \eqref{MMM5600} hold when $k \geq 1$. 

If $k \leq 1$ then $\{ 1, \cdots, k-1 \}  = \emptyset$ and there is nothing to prove. So we assume that $k \geq 2$. We claim that
\begin{equation}\label{MMM8300}
\partial U_{i_{k-1}} \cap B_{R} \not = \emptyset.
\end{equation}
Suppose not. Then we have that $\partial U_{i_{k-1}} \cap B_{R} = \emptyset$. So with the fact that $U_{i_{k-1}} \supset U_{i_{k}} \owns \mathbf{0}$, we find from \eqref{MMM6560} that  $U_{i_{k-1}} \supset B_{R}$. This contradict \eqref{MMM6700} and the fact that $k \geq 2$. So the claim \eqref{MMM8300} holds.

Since $(C^{1,\gamma},80R,\theta)$-domains $\mathbb{R}^{n} \setminus \overline{U_{i_{k-1}}}$ and $U_{i_{k}}$ are disjoint, and so we apply Lemma \ref{HHH1000} to $U_{i_{k}}$, $\mathbb{R}^{n} \setminus \overline{U_{i_{k-1}}}$, $\varphi_{k}$ and $\varphi_{k-1}$ instead of $U_{1}$, $U_{2}$, $\varphi_{1}$ and $\varphi_{2}$ by using  \eqref{MMM7200} and \eqref{MMM7100}. Then there exists $C^{1,\gamma}$-function $\varphi_{k-1} : B_{R}' \to \mathbb{R}$ such that 
\begin{equation*}
(\mathbb{R}^{n} \setminus \overline{U_{i_{k-1}}}) \cap B_{R} 
 = \{ (y^{1},y') \in B_{R} : y^{1} < \varphi_{k-1}(y')\}
\end{equation*}
with the estimates
\begin{equation*}
\|D_{y'} \varphi_{k-1} \|_{L^{\infty}(B_{R}')} \leq  \tau
\qquad \text{and} \qquad
[D_{y'} \varphi_{k-1}]_{C^{\gamma}(B_{R}')} \leq  288n\theta,
\end{equation*}
which implies that
\begin{equation*}
U_{i_{k-1}} \cap B_{R} 
 = \{ (y^{1},y') \in B_{R} : y^{1} > \varphi_{k-1}(y') \}
\end{equation*}
Repeat the process for $U_{i_{k-2}}, U_{i_{k-3}}, \cdots, U_{i_{1}}$ instead of $U_{i_{k-1}}$. Then one can find $C^{1,\gamma}$-functions $\varphi_{k-1}, \cdots, \varphi_{1} : B_{R}' \to \mathbb{R}$ such that for $d \in \{ 1, \cdots, k-1 \} $,
\begin{equation}\label{MMM8700}
U_{i_{d}} \cap B_{R} 
 = \{ (y^{1},y') \in B_{R} : y^{1} > \varphi_{d}(y')\}
\end{equation}
with the estimates
\begin{equation}\label{MMM8800}
\|D_{y'} \varphi_{d} \|_{L^{\infty}(B_{R}')} \leq  \tau
\quad \text{and} \quad
[D_{y'} \varphi_{d}]_{C^{\gamma}(B_{R}')} \leq 288n\theta.
\end{equation}
Recall from \eqref{MMM6200} and \eqref{MMM6400} that
\begin{equation*}
W_{i_{d}} \cap B_{R} = [U_{i_{d}} \setminus U_{i_{d+1}}] \cap B_{R}
\qquad \text{and} \qquad 
U_{i_{d}} \supsetneq U_{i_{d+1}}
\qquad \text{for} \qquad  
d \in \{ 1, \cdots, k-1 \} .
\end{equation*}
By \eqref{MMM7100}, \eqref{MMM8700} and \eqref{MMM8800}, \eqref{MMM5400} and \eqref{MMM5500} hold for $d \in \{ 1, \cdots, k-1 \} $. 

\medskip 

\noindent [Step 2-3 : $U_{-1},\cdots, U_{-m}$] In this step, we show that \eqref{MMM5400} and \eqref{MMM5500} for $d \in \{ -m, \cdots, -1 \} $ when $k \geq 1$.

Since $k \geq 1$, we have that $(C^{1,\gamma},80R,\theta)$-domains $U_{i_{-1}}$ and $U_{i_{k}}$ are disjoint. If $\partial U_{i_{-1}} \cap B_{R} = \emptyset$, then we have from \eqref{MMM6560} and \eqref{MMM6700} that  $B_{R} \cap U_{i_{-1}} = \emptyset$, and so the proof for [Step 1-4] finished by taking $m=0$ and $\varphi_{-m} \equiv -R$. 

Next, assume that for some $m \geq 1$,
\begin{equation}\label{MMM9400}
\partial U_{i_{-1}} \cap B_{R}  \not = \emptyset , \cdots, \partial U_{i_{-m}} \cap B_{R} \not = \emptyset
\quad \text{and} \quad
\partial U_{i_{-m-1}} \cap B_{R} = \emptyset.
\end{equation}
Then by using \eqref{MMM7200} and \eqref{MMM7100}, we apply Lemma \ref{HHH1000} to $U_{i_{k}}$, $U_{i_{-1}}$, $\varphi_{k}$ and $\varphi_{0}$ instead of $U_{1}$, $U_{2}$, $\varphi_{1}$ and $\varphi_{2}$ to find that there exists $C^{1,\gamma}$-function $\varphi_{-k} : B_{R}' \to \mathbb{R}$ such that 
\begin{equation*}
U_{i_{-1}} \cap B_{R} 
 = \{ (y^{1},y') \in B_{R} : y^{1} < \varphi_{0}(y')\}
\end{equation*}
with the estimates
\begin{equation*}\|D_{y'} \varphi_{0} \|_{L^{\infty}(B_{R}')} \leq  \tau
\qquad \text{and} \qquad
[D_{y'} \varphi_{0}]_{C^{\gamma}(B_{R}')} \leq 288n\theta.
\end{equation*}
Repeat the process for $U_{i_{-2}}, U_{i_{-3}}, \cdots, U_{i_{-m}}$ instead of $U_{i_{-1}}$. Then one can find $C^{1,\gamma}$-functions $\varphi_{0}, \cdots, \varphi_{-m+1} : B_{R}' \to \mathbb{R}$ such that for $d \in \{ -m, \cdots, -1 \} $,
\begin{equation}\label{MMM9700}
U_{i_{d}} \cap B_{R} 
 = \{ (y^{1},y') \in B_{R} : y^{1} < \varphi_{d+1}(y')\}
\end{equation}
with the estimates
\begin{equation}\label{MMM9800}
\|D_{y'} \varphi_{d+1} \|_{L^{\infty}(B_{R}')} \leq  \tau
\quad \text{and} \quad
[D_{y'} \varphi_{d+1}]_{C^{\gamma}(B_{R}')} \leq 288n\theta.
\end{equation}
Recall from \eqref{MMM6200} and \eqref{MMM6500} that
\begin{equation*}
W_{i_{d}} \cap B_{R} = [U_{i_{d}} \setminus U_{i_{d-1}}] \cap B_{R}
\quad \text{ and } \quad
U_{i_{d}} \supsetneq U_{i_{d-1}}
\quad \text{ for } \quad
d \in \{ -m, \cdots, -1 \} .
\end{equation*}
So by  \eqref{MMM8700}, \eqref{MMM9700} and \eqref{MMM9800}, we find that \eqref{MMM5400} and \eqref{MMM5500} holds for $d \in \{ -m, \cdots, -1 \} $ by taking $\varphi_{-m} \equiv -R$.

\medskip

\noindent [Step 2-4 : $W_{i_{0}}$] In this step, we show that \eqref{MMM5400}, \eqref{MMM5500} for $d = 0$ and \eqref{MMM5600} hold when $k \geq 1$.

Recall from \eqref{MMM6200} and \eqref{MMM6300} that 
\begin{equation*}
W_{i_{0}} \cap B_{R} = [U_{i_{0}} \setminus (U_{i_{1}} \sqcup U_{i_{-1}} )] \cap B_{R}
\quad \text{and} \quad
U_{i_{1}}, U_{i_{-1}}
\subsetneq 
U_{i_{0}} .
\end{equation*}
So it follows from \eqref{MMM8700} and \eqref{MMM9700} that
\begin{equation*}\begin{aligned}
& \{ (y^{1},y') \in B_{R} : \varphi_{0}(y') < y^{1} < \varphi_{1}(y')  \} \\
& \qquad \subset
W_{i_{0}} \cap B_{R} \\
& \qquad \subset \{ (y^{1},y') \in B_{R} : \varphi_{0}(y') \leq y^{1} \leq \varphi_{1}(y')  \}.
\end{aligned}\end{equation*}
Thus \eqref{MMM5400} and \eqref{MMM5500}  for $d=0$ follows by taking $d= -1$ in  \eqref{MMM9800}. 

\medskip

By [Step 2-1] to [Step 2-3], we find that the lemma holds when $k \geq 1$.

\medskip

\noindent [Step 3 : $k=0$] Suppose that $k=0$. If $\partial U_{i_{1}} \cap B_{R} = \emptyset$ and $\partial U_{i_{-1}} \cap B_{R} = \emptyset$ then we have from \eqref{MMM6560} and \eqref{MMM6700} that $B_{R} \cap U_{i_{1}} = \emptyset$ and $B_{R} \cap U_{i_{-1}} = \emptyset$. So if follows from \eqref{MMM6300} that $W_{i_{0}} \cap B_{R} = U_{i_{0}} \cap B_{R} = U \cap B_{R} = B_{R}$, and the proof is finished by taking $l=m=0$ because of that $B_{R} \subset W_{0}$.

Next, suppose that one of $\partial U_{i_{1}} \cap B_{R} = \emptyset$ or $\partial U_{i_{-1}} \cap B_{R} = \emptyset$ holds. Without loss of generality, we assume that
\begin{equation}\label{NNN1900}
\partial U_{i_{-1}} \cap B_{R} \not = \emptyset.
\end{equation}
Then there exists $l \geq 0$ such that
\begin{equation}\label{NNN2000}
\partial U_{i_{-1}} \cap B_{R} \not = \emptyset, \partial U_{i_{1}} \cap B_{R} \not = \emptyset, \cdots, \partial U_{i_{l}} \cap B_{R} \not = \emptyset
\quad \text{and} \quad
\partial U_{i_{l+1}} \cap B_{R} = \emptyset.
\end{equation}
By comparing \eqref{MMM7250} and \eqref{NNN2000}, one can repeat the proof of [Step 2-1] for $\mathbb{R}^{n} \setminus \overline{U_{i_{-1}}},U_{i_{1}},\cdots, U_{i_{l+1}}$ instead of $U_{i_{k}},U_{i_{k+1}},\cdots, U_{i_{l+1}}$ to find that one can find $C^{1,\gamma}$-functions $\varphi_{0}, \cdots, \varphi_{l+1} : B_{R}' \to \mathbb{R}$ instead of $\varphi_{k}, \cdots, \varphi_{l+1} : B_{R}' \to \mathbb{R}$ satisfying \eqref{MMM5400}, \eqref{MMM5500} and \eqref{MMM5600} holds for $d \in \{ 0, \cdots, l \} $. 

In view of \eqref{NNN1900}, there exists $m \geq 1$ satisfying that
\begin{equation}\label{NNN2300}
\partial U_{i_{-1}} \cap B_{R}  \not = \emptyset , \cdots, \partial U_{i_{-m}} \cap B_{R} \not = \emptyset
\quad \text{and} \quad
\partial U_{i_{-m-1}} \cap B_{R} = \emptyset.
\end{equation}
By comparing \eqref{MMM9400} and \eqref{NNN2300}, one can repeat [Step 2-3] to find that one can find $C^{1,\gamma}$-functions $\varphi_{0}, \varphi_{-1}, \cdots, \varphi_{-m+1} : B_{R}' \to \mathbb{R}$ instead of $\varphi_{k}, \varphi_{-1},\cdots, \varphi_{-m+1} : B_{R}' \to \mathbb{R} $ satisfying \eqref{MMM5400}, \eqref{MMM5500} and \eqref{MMM5600} for $d \in \{ -m, \cdots, -1 \} $.

By [Step 3], the lemma holds when $k=0$. This completes the proof.
\end{proof}

Next, we handle the case that $\mathbf{0} \in \partial U$. For the convenience of the reader, we restate Theorem \ref{main theorem2}.

\maintheoremboundary*

\begin{proof}
For $R_{4}(n,\gamma,\theta,\tau) \in (0,1]$ in Lemma \ref{HHH1000}, we let $R_{0} = R_{4} \in (0,1]$. 

In view of Lemma \ref{MMM1000}, there exist $U_{i_{l+1}}, \cdots, U_{i_{0}} \in S$ $(l \geq 0)$ with
\begin{equation}\label{NNN6100}
U \cap B_{R} = ( W_{i_{l}} \sqcup \cdots \sqcup W_{i_{0}} ) \cap B_{R},
~
U_{i_{k}} \supsetneq U_{i_{k+1}} ~ \text{ for } ~  0 \leq k \leq l
~ \text{ and } ~ 
U_{i_{l+1}} = \emptyset
\end{equation}
satisfying 
\begin{equation}\label{NNN6200}
U \cap B_{R} 
= U_{i_{0}} \cap B_{R} 
\quad \text{and} \quad
U_{i_{k}} \cap B_{R}  = (W_{i_{k}} \sqcup U_{i_{k+1}}) \cap B_{R} 
\quad
(0 \leq k \leq l).
\end{equation}
In view of \eqref{NNN6100}, \eqref{NNN5300} holds.

One can easily check that
\begin{equation}\label{NNN6560}
\partial V \cap B_{R} = \emptyset
\qquad \Longrightarrow \qquad
B_{R} \subset V 
\qquad \text{or} \qquad
B_{R} \cap V = \emptyset,
\end{equation}

We claim that
\begin{equation}\label{NNN6600}
\mathbf{0} \in \partial U_{i_{0}}.
\end{equation}
One can check from \eqref{NNN6200} that for any $\rho \in (0,R]$,
\begin{equation*}
B_{\rho} \cap U_{i_{0}}^{c}
= (B_{\rho} \cap B_{R}^{c}) \cup (B_{\rho} \cap U_{i_{0}}^{c})
= B_{\rho} \cap (B_{R}^{c} \cup U_{i_{0}}^{c})
= B_{\rho} \cap (B_{R} \cap U_{i_{0}})^{c}
\end{equation*}
and
\begin{equation*}
B_{\rho} \cap (B_{R} \cap U_{i_{0}})^{c}
= B_{\rho} \cap (B_{R} \cap U)^{c}
= B_{\rho} \cap (B_{R}^{c} \cup U^{c})
= B_{\rho} \cap U^{c}.
\end{equation*}
So from the fact that $\mathbf{0} \in \partial U$, we have that for any $\rho \in (0,R]$,
\begin{equation*}
B_{\rho} \cap U_{i_{0}}
= B_{\rho} \cap (B_{R} \cap U_{i_{0}})
= B_{\rho} \cap (B_{R} \cap U) 
= B_{\rho} \cap U \not = \emptyset
\end{equation*}
and
\begin{equation*}
B_{\rho} \cap U_{i_{0}}^{c} = B_{\rho} \cap U^{c} 
\not = \emptyset.
\end{equation*}
Thus we find that the claim \eqref{NNN6600} holds.

Since $U_{i_{0}}$ is $(C^{1,\gamma},80R,\theta)$-domain, by applying Lemma \ref{CTS8000} with \eqref{NNN6600}, there exist an orthonormal matrix $V$ with $\det V>0$ and  $C^{1,\gamma}$-function $\varphi : B_{R}' \to \mathbb{R}$ such that
\begin{equation}\label{NNN7000}
U_{i_{0}} \cap B_{R} = \left \{ \sum\limits_{1 \leq k \leq n} y^{k} V_{k} \in B_{R} : y^{1} > \varphi_{0}(y') \right\}
\qquad \text{and} \qquad
|\varphi_{0}(0')| = 0,
\end{equation}
with the estimates
\begin{equation}\label{NNN7200}
D_{y'}\varphi_{0}(0')=0',
\quad
\| D_{y'}\varphi_{0} \|_{L^{\infty}(B_{R}')} 
\leq \tau
\quad \text{and} \quad
[D_{y'}\varphi_{0}]_{C^{\gamma}(B_{R}')} 
\leq 18 n\theta.
\end{equation}
So for $y$-coordinate system with the orthonormal basis $\{ V_{1}, \cdots, V_{n} \}$, we have that
\begin{equation}\label{NNN7100}
U_{i_{0}} \cap B_{R}  = \left \{ (y^{1},y') \in B_{R}  : y^{1} > \varphi_{0}(y') \right\}
\qquad \text{and} \qquad
|\varphi_{0}(0')| < R.
\end{equation}
Thus \eqref{NNN5600} holds. We next prove \eqref{NNN5400} and \eqref{NNN5500}.

Suppose that $\partial U_{i_{1}} \cap B_{R} = \emptyset$. Then by \eqref{NNN6560}, we have that $B_{R} \subset U_{i_{1}}$ or $B_{R} \cap U_{i_{1}} = \emptyset$. On the other-hand, from \eqref{NNN6100} and \eqref{NNN6200}, we have that $U_{i_{1}} \cap B_{R} \subsetneq U_{i_{0}} \cap B_{R} \subset B_{R}$. So we find that $B_{R} \cap U_{i_{1}} = \emptyset$. So with \eqref{NNN7100}, choose $l=1$ and $\varphi_{l} \equiv R$. This finishes the proof for when $\partial U_{i_{1}} \cap B_{R} = \emptyset$. 

Next assume that for some $l \geq 1$,
\begin{equation}\label{NNN7250}
U_{i_{1}} \cap B_{R}  \not = \emptyset , \cdots, U_{i_{l}} \cap B_{R} \not = \emptyset 
\qquad \text{ and } \qquad
U_{i_{l+1}} \cap B_{R} = \emptyset.
\end{equation}
Let $z$-coordinate system be the coordinate system with $z = -y$. Then by letting $\bar{\varphi}_{0}(z') = - \varphi_{0}(-z')$, we have that $(C^{1,\gamma},80R,\theta)$-domain $\mathbb{R}^{n} \setminus \overline{U_{i_{0}}}$ satisfy
\begin{equation}\label{NNN7300}
(\mathbb{R}^{n} \setminus \overline{U_{i_{0}}}) \cap B_{R} 
 = \{ (z^{1},z') \in B_{R} : z^{1} > \bar{\varphi}_{0}(z')\}
\quad \text{and} \quad
|\bar{\varphi}_{0}(0')| < R,
\end{equation}
with the estimates
\begin{equation}\label{NNN7400}
D_{z'}\bar{\varphi}_{0}(0')=0,
\quad
\| D_{z'}\bar{\varphi}_{0} \|_{L^{\infty}(B_{R}')} 
\leq 1
\quad \text{and} \quad
[D_{z'}\bar{\varphi}_{0}]_{C^{\gamma}(B_{R}')} 
\leq 18 n\theta.
\end{equation}
We have from \eqref{NNN6200} that $(C^{1,\gamma},80R,\theta)$-domains $\mathbb{R}^{n} \setminus \overline{U_{i_{0}}}$ and $U_{i_{1}}$ are disjoint. So we apply Lemma \ref{HHH1000} to $\mathbb{R}^{n} \setminus \overline{U_{i_{0}}}$, $U_{i_{1}}$, $\bar{\varphi}_{0}$ and $\bar{\varphi}_{1}$ instead of $U_{1}$, $U_{2}$, $\varphi_{1}$ and $\varphi_{2}$  with respect to  $z$-coordinate  by using \eqref{NNN7300} and \eqref{NNN7400}. Then there exists $C^{1,\gamma}$-function $\bar{\varphi}_{1} : B_{R}' \to \mathbb{R}$ such that 
\begin{equation*}
U_{	i_{1}} \cap B_{R} 
 = \{ (z^{1},z') \in B_{R} : z^{1} < \bar{\varphi}_{1}(z')\}
\end{equation*}
with the estimates
\begin{equation*}
\|D_{z'} \bar{\varphi}_{0} \|_{L^{\infty}(B_{R}')}, \|D_{z'} \bar{\varphi}_{1} \|_{L^{\infty}(B_{R}')} \leq  \tau
\end{equation*}
and
\begin{equation*}
[D_{z'} \bar{\varphi}_{0}]_{C^{\gamma}(B_{R}')},
[D_{z'} \bar{\varphi}_{1}]_{C^{\gamma}(B_{R}')} \leq  288n\theta.
\end{equation*}
Thus by letting $\varphi_{1}(y') = - \bar{\varphi}_{1}(-y')$, we obtain that
\begin{equation*}
U_{i_{1}} \cap B_{R} 
 = \{ (y^{1},y') \in B_{R} : y^{1} > \varphi_{1}(y')\}
\end{equation*}
with the estimates
\begin{equation*}
\|D_{y'} \varphi_{0} \|_{L^{\infty}(B_{R}')}, \|D_{y'} \varphi_{1} \|_{L^{\infty}(B_{R}')} \leq  \tau
\end{equation*}
and
\begin{equation*}
[D_{y'} \varphi_{0}]_{C^{\gamma}(B_{R}')},
[D_{y'} \varphi_{1}]_{C^{\gamma}(B_{R}')} \leq 288n\theta.
\end{equation*}
Repeat this process for $U_{i_{2}}, U_{i_{k+3}}, \cdots, U_{i_{l}}$ instead of $U_{i_{1}}$. Then one can find $C^{1,\gamma}$-functions $\varphi_{0}, \cdots, \varphi_{l} : B_{R}' \to \mathbb{R}$ such that for $d \in \{ 0, \cdots, l \} $,
\begin{equation}\label{NNN7700}
U_{i_{d}} \cap B_{R} 
 = \{ (y^{1},y') \in B_{R} : y^{1} > \varphi_{d}(y')\}
\end{equation}
with the estimates
\begin{equation}\label{NNN7800}
\|D_{y'} \varphi_{d} \|_{L^{\infty}(B_{R}')} \leq  \tau
\qquad \text{and} \qquad
[D_{y'} \varphi_{d}]_{C^{\gamma}(B_{R}')} \leq 288n\theta.
\end{equation}
Recall from \eqref{NNN6200} that
\begin{equation}\label{NNN7900}
W_{i_{d}} \cap B_{R} = [U_{i_{d}} \setminus U_{i_{d+1}}] \cap B_{R}
\quad \text{and} \quad 
U_{i_{d}} \supsetneq U_{i_{d+1}}
\quad \text{for} \quad  
d \in \{ 0, \cdots, l \} .
\end{equation}
So by \eqref{NNN7700}, \eqref{NNN7800} and \eqref{NNN7900}, we find that \eqref{NNN5400} and \eqref{NNN5500} holds for $d \in \{ 0, \cdots, l \} $ by taking $\varphi_{l+1} \equiv R$.
\end{proof}

\section*{Acknowledgements}
Y. Kim was supported by the National Research Foundation of Korea (NRF) grant funded by the Korea Government NRF-2020R1C1C1A01013363.
P. Shin was supported by Basic Science Research Program through the National Research Foundation of Korea(NRF) funded by the Ministry of Education(No. NRF-2020R1I1A1A01066850).

\bibliographystyle{amsplain}

\end{document}